\newtheorem{theorem}{Theorem}[section]
\newtheorem{lemma}[theorem]{Lemma}
\newtheorem{proposition}[theorem]{Proposition}
\newtheorem{corollary}[theorem]{Corollary}
\newtheorem{conjecture}[]{Conjecture}
\newcommand\abs[1]{\lvert #1\rvert}
\newcommand \dia{\hfill{$\diamond$}}
\newcommand\rank{\operatorname{rank}}
\newcommand\lrw{\operatorname{lrw}}
\newcommand\pw{\operatorname{pw}}
\newcommand\cutrk{\operatorname{cutrk}}
\def\bw_#1{{\overline{BW_{#1}}}}
\newcommand{\NP}{{\sf NP}}
\newcommand\pivot{\wedge}
\def\bw_#1{{\overline{BW_{#1}}}}
\begin{document}
\title{Tree Pivot-Minors and Linear Rank-Width\thanks{An extended abstract of this paper appeared in the proceedings of EuroComb 2019~\cite{DDJKKOP19}.
Dabrowski and Paulusma 
were supported by the Leverhulme Trust (RPG-2016-258). This work was mainly done when Jeong was in KAIST.  Kant\'e was supported by the French Agency for Research under the projects DEMOGRAPH (ANR-16-CE40-0028) and ASSK (ANR-18-CE40-0025). Kwon was supported by the National Research Foundation of Korea funded by the Ministry of Education (No. NRF-2018R1D1A1B07050294). Kwon and Oum were supported by the Institute for Basic Science (IBS-R029-C1).
Dross was supported by the European Research Council (ERC) under the European Union's Horizon 2020 research and innovation programme Grant Agreement 714704.}}

\author[1]{Konrad K. Dabrowski}
\affil[1]{School of Computing, University of Leeds, UK}

\author[2]{Fran\c{c}ois Dross}
\affil[2]{Institute of Informatics, University of Warsaw, Poland}

\author[3]{Jisu Jeong}
\affil[3]{Clova AI Research, NAVER Corp, Seongnam,~Korea}

\author[4]{Mamadou Moustapha Kant\'e} 
\affil[4]{Universit\'e Clermont Auvergne, LIMOS, CNRS, Aubi\`ere, France}

\author[5,6]{O-joung Kwon}
\affil[5]{Department of Mathematics, Incheon National University, Incheon,~Korea}

\author[6,7]{Sang-il Oum}
\affil[6]{Discrete Mathematics Group,
  Institute for Basic Science (IBS), Daejeon,~Korea}
\affil[7]{Department of Mathematical Sciences, KAIST, Daejeon,~Korea}
\author[1]{Dani\"el Paulusma}

\date\today
\maketitle

  \setcounter{footnote}{1}%
 \footnotetext{E-mail addresses: 
    \texttt{k.k.dabrowski@leeds.ac.uk} (Dabrowski),
    \texttt{francois.dross@googlemail.com} (Dross),
    \texttt{jisujeong89@gmail.com} (Jeong),
    \texttt{mamadou.kante@uca.fr} (Kant\'e),
    \texttt{ojoungkwon@gmail.com} (Kwon),
    \texttt{sangil@ibs.re.kr} (Oum),
    \texttt{daniel.paulusma@durham.ac.uk} (Paulusma).
    }

\vspace*{-1cm}
\begin{abstract}
Tree-width and its linear variant path-width play a central role for the graph minor relation.
In particular, Robertson and Seymour (1983)
proved that for every tree~$T$, the class of graphs that do not contain $T$ as a minor has bounded path-width.
For the pivot-minor relation, rank-width and linear rank-width take over the role of tree-width and path-width. As such, it is natural to examine if, for every tree~$T$, the class of graphs that do not contain $T$ as a pivot-minor has bounded linear rank-width.
We first prove that this statement is false whenever $T$ is a tree that is not a caterpillar.
We conjecture that the statement is true if $T$ is a caterpillar.
We are also able to give partial confirmation of this conjecture by proving:
\begin{itemize}
\item for every tree $T$, the class of $T$-pivot-minor-free distance-hereditary graphs has bounded linear rank-width if and only if $T$ is a caterpillar;
\item for every caterpillar $T$ on at most four vertices, the class of $T$-pivot-minor-free graphs has bounded linear rank-width.
\end{itemize}
\noindent
To prove our second result, we only need to consider $T=P_4$ and $T=K_{1,3}$, but we follow a general strategy: 
first we show that the class of $T$-pivot-minor-free graphs is contained in some class of $(H_1,H_2)$-free graphs, which we then show to have bounded linear rank-width. In particular, we prove that the class of 
$(K_3,S_{1,2,2})$-free graphs has bounded linear rank-width, which strengthens a known result that this graph class has bounded rank-width. 
\end{abstract}

\section{Introduction}\label{s-intro}

In order to increase our understanding of graph classes, it is natural to consider some notion of ``width'' and to research what properties graph classes of bounded width may have. We say that a graph class has {\it bounded} width (for some specific width parameter) if there exists a constant~$c$ such that the width of every graph in the class is at most~$c$.
In particular, this type of structural research has been done in the context of graph containment problems, where the aim is to determine whether a graph~$H$ appears as a ``pattern'' inside some other graph~$G$. 
Here, a pattern is defined by specifying a set of graph operations that may be used to obtain~$H$ from $G$.
For instance, a graph~$G$ contains a graph~$H$ as a {\em minor} if~$H$ can be obtained from~$G$ via a sequence of vertex deletions, edge deletions and edge contractions.

Tree-width and its {\it linear} variant, path-width, are the best-known graph width parameters due to their relevance for graph minor theory~\cite{GMXX}.
Rank-width is another well-known parameter, introduced by Oum and Seymour~\cite{OS06}. 
The rank-width of a graph~$G$ expresses the minimum width~$k$ of a tree-like structure obtained by recursively splitting the vertex set of $G$ in such a way that each cut induces a matrix of rank at most~$k$ (see Section~\ref{s-pre} for a formal definition).
Rank-width is more general than tree-width in the sense that every graph class of bounded tree-width has bounded rank-width, but there are classes for which the reverse does not hold, for example, the class of all complete graphs~\cite{CO00}.

The notion of rank-width has important algorithmic implications, as many \NP-complete decision problems are known to be polynomial-time solvable not only for graph classes of bounded tree-width, but also for graph classes of bounded rank-width; see~\cite{CourcelleMR00,EGW01,GK03,KR03,Ra07} for a number of meta-theorems capturing such decision problems.
Rank-width is equivalent to clique-width~\cite{OS06}, another important and well-studied width parameter.
Linear rank-width is a linearized variant of rank-width, known to be equivalent to linear clique-width (see, for example,~\cite{Ou17}) and to be closely related to the trellis-width of linear codes~\cite{Ka08}. We formally define the notions of rank-width and linear rank-width in Section~\ref{s-pre}.

The problem of determining whether a given graph has linear rank-width at most~$k$ for some given integer~$k$ is \NP-complete (this follows from a result of Kashyap~\cite{Ka08}).
On the positive side, Jeong, Kim, and Oum~\cite{JKO16} gave an FPT algorithm for deciding whether a graph has linear rank-width at most~$k$. 
Ganian~\cite{Ga11} and Adler, Farley, and Proskurowski~\cite{AFP13} characterized the graphs of linear rank-width at most~$1$.
Recently, Ne{\v{s}}et{\v{r}}il et al.~\cite{Nesetril19} showed that every class of bounded linear rank-width is linearly $\chi$-bounded.
However, our knowledge on linear rank-width, the topic of this paper, is still limited.

\subsection*{Motivation}

To increase our understanding of rank-width and linear rank-width, we may want to verify if classical results for tree-width and path-width stay valid when we replace tree-width with rank-width and path-width with linear rank-width. The following two structural results, related to path-width and tree-width, form the core of the Graph Minor Structure Theorem. Here, a graph~$G$ is {\it $H$-minor-free} for some graph~$H$ if $G$ does not contain $H$ as a minor.

\begin{theorem}[Robertson and Seymour~\cite{GMI}]\label{t-rs83}
For every tree~$T$, the class of $T$-minor-free graphs has bounded path-width.
\end{theorem}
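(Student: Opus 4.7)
The plan is to prove the contrapositive by induction on $\abs{V(T)}$: for every tree $T$, there exists a constant $N(T)$ such that every graph of path-width at least $N(T)$ contains $T$ as a minor. The base cases $\abs{V(T)} \leq 2$ are immediate, since a graph of path-width at least $1$ must contain an edge and any nonempty graph trivially contains the one-vertex tree.

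For the inductive step, fix a tree $T$ on $n \geq 3$ vertices. First I would pick a leaf $\ell$ of $T$ with unique neighbor $u$ and set $T' := T - \ell$. By the inductive hypothesis, there exists $N' := N(T')$ such that every graph of path-width at least $N'$ contains $T'$ as a minor. My goal is then to define $N(T)$ as a function of $N'$ and $n$ so that graphs of path-width at least $N(T)$ contain~$T$ as a minor.

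The heart of the proof would be a structural claim of the following flavour: any graph $G$ of sufficiently large path-width either contains $T$ directly as a minor, or contains many pairwise vertex-disjoint subgraphs $H_1,\ldots,H_q$, each of path-width at least $N'$, arranged in a path-like manner compatible with an optimum path-decomposition of $G$. Concretely, I would argue from a minimum-width path-decomposition $(X_1,\ldots,X_r)$ of $G$: if the maximum bag size exceeds roughly $q(N'+1)$, one can carve the decomposition into $q$ consecutive intervals whose induced subgraphs still have path-width at least $N'$, each therefore containing a $T'$-minor by the inductive hypothesis. Consecutive intervals share vertices in their boundary bags, giving a canonical way to move between them.

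Finally, for $q$ chosen slightly larger than $n$, a pigeonhole plus rerouting argument should produce, inside some $H_i$, a $T'$-minor together with a path in $G$ from the branch set representing $u$ to a fresh vertex outside that minor which can be absorbed as the branch set for the leaf $\ell$, yielding $T$ as a minor of $G$. The main obstacle, and the step that requires genuine care, is precisely this last one: once a $T'$-minor is fixed inside some $H_i$, one must still guarantee enough free vertices and enough connectivity through the remaining bags to extend the branch set of $u$ out to a new vertex representing $\ell$ without disturbing the rest of the model. Handling this cleanly likely demands a refined ``thick path'' or linkedness extraction rather than the crude bag-splitting sketched above, in the spirit of the forest-minor arguments later formalised by Bienstock, Robertson, Seymour, and Thomas.
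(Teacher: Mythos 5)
This theorem is not proved in the paper at all: it is quoted from Robertson and Seymour's \emph{Graph Minors~I}, and the quantitatively sharp version (every $T$-minor-free graph has path-width at most $|V(T)|-2$) is likewise only cited later as Theorem~\ref{t-brst91}. So your attempt has to stand on its own, and as written it has two genuine gaps.

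First, the central structural claim --- that a minimum-width path decomposition $(X_1,\ldots,X_r)$ with bags of size exceeding roughly $q(N'+1)$ can be carved into $q$ consecutive intervals each inducing a subgraph of path-width at least $N'$ --- is unjustified, and the natural attempt to derive it from minimality fails. If some interval induced a subgraph admitting a decomposition of width below $N'$, you cannot simply splice that thinner decomposition into the original one: to preserve the interface you must add the interval's boundary vertices (up to $2(k+1)$ of them, where $k$ is the width of the original decomposition) to every bag of the replacement, so a contradiction with optimality would require $N'+2(k+1)\le k$, which never holds. Moreover, large width gives you no control over the number of bags available to carve; an optimal decomposition of a clique consists of a single bag, so $q$ intervals need not even exist. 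Second, you explicitly concede that the step of growing a $T'$-minor into a $T$-minor by attaching the leaf $\ell$ is unresolved and ``likely demands a refined thick path or linkedness extraction.'' That step is the entire content of the theorem, not a finishing touch: a $T'$-model found inside one interval may consume exactly the vertices and connectivity needed to reach a fresh vertex for $\ell$, and pigeonholing over $q>n$ disjoint pieces does not by itself supply the required routing. The actual proofs (Robertson--Seymour, and Bienstock--Robertson--Seymour--Thomas for the bound $|V(T)|-2$) do run an induction over the forest, but the inductive extraction of the minor is carried out by a genuinely different and more delicate mechanism than bag-splitting of an optimal path decomposition.
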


\begin{theorem}[Roberson and Seymour~\cite{GMV}]\label{t-rs94}
For every planar graph~$H$, the class of $H$-minor-free graphs has bounded tree-width.
\end{theorem}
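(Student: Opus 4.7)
The plan is to reduce the theorem to two ingredients: the \emph{Excluded Grid Theorem} of Robertson and Seymour, which asserts the existence of a function $f\colon \mathbb{N}\to\mathbb{N}$ such that every graph of tree-width at least $f(k)$ contains the $k\times k$ grid as a minor; and the elementary fact that every planar graph is a minor of a sufficiently large grid. Given these, the theorem is immediate: if $H$ is planar on $n$ vertices and is a minor of the $m(n)\times m(n)$ grid, then by transitivity of the minor relation every $H$-minor-free graph is also grid-minor-free, so its tree-width is less than $f(m(n))$. The final bound is therefore $f(m(|V(H)|))-1$.

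First I would verify that every planar graph embeds as a minor of a sufficiently large grid. A convenient route is to take a straight-line planar embedding of $H$ on an $O(n)\times O(n)$ integer grid (for instance via Schnyder's realizer theorem), model each vertex of $H$ as a small connected sub-grid, and route the edges of $H$ along pairwise internally-disjoint paths in a slightly larger grid, subdividing where necessary. The exact bound on $m(n)$ is irrelevant for us; all that matters is that some polynomial bound exists as a function of $n=|V(H)|$.

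The hard part, of course, is the Excluded Grid Theorem itself. The classical strategy first exchanges large tree-width for a large bramble (or, equivalently, a tangle), via the Seymour--Thomas duality between tree-width and brambles. From such a bramble one extracts a ``wall'', a highly connected subgraph of regular shape, and then performs a delicate cleaning and rerouting procedure to find pairwise disjoint paths that weave through the wall in a grid-like pattern. This combinatorial stitching step is the true technical obstacle of the whole argument; improving the resulting growth rate of $f$ (culminating in the polynomial bounds of Chekuri--Chuzhoy) has been a sustained research program in its own right, although for the qualitative statement here any finite $f$ suffices.
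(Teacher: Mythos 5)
The paper does not prove this statement: it is imported verbatim as a known theorem of Robertson and Seymour (Graph Minors V), so there is no in-paper proof to compare against. Judged on its own terms, your reduction is the right one and is in fact how the result is classically obtained: combine the Excluded Grid Theorem with the observation that every planar graph $H$ is a minor of some $m(|V(H)|)\times m(|V(H)|)$ grid, and conclude by transitivity of the minor relation that an $H$-minor-free graph excludes that grid as a minor and hence has tree-width below $f(m(|V(H)|))$. The two elementary ingredients are handled correctly: the grid-routing of a straight-line integer-grid embedding does show every planar graph is a minor of a polynomial-size grid, and the transitivity step is sound.

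The genuine gap is that the Excluded Grid Theorem itself, which carries essentially all of the content of the statement (it is the main theorem of the very paper being cited), is only narrated, not proved. The paragraph about passing from large tree-width to a bramble or tangle, extracting a wall, and then ``cleaning and rerouting'' to find a grid minor is a description of a proof strategy, not an argument: the wall-extraction and the rerouting step each require many pages of careful combinatorics, and nothing in your text would let a reader verify that any particular function $f$ works. If you are permitted to quote the grid theorem as a black box, your proof is complete; if you are required to be self-contained, the core step is missing. Either way, you should state explicitly which of the two you intend, and in the former case give a precise citation for the version of the grid theorem you invoke rather than sketching its proof.
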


\noindent
It is known that edge deletions and contractions may increase the rank-width and linear rank-width~\cite{Co14}.
Hence, working with minors is not a suitable approach for understanding rank-width and linear rank-width. Therefore, Oum~\cite{Ou05} proposed the notions of vertex-minors and pivot-minors, two closely related notions, which were called $\ell$-reductions and $p$-reductions, respectively, in~\cite{Bouchet1994}. Taking vertex-minors or pivot-minors does {\it not} increase the rank-width or linear rank-width of a graph~\cite{Ou05}. 

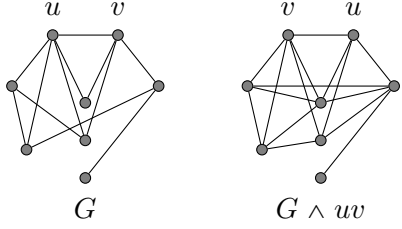
\begin{figure}
  \begin{center}
    \tikzstyle{v}=[circle,draw,fill=black!50,inner sep=0pt,minimum width=4pt]
  \begin{tikzpicture}
    \draw (0,0)node[v](v){};
    \foreach \i in {1,2,3,4,6,7} {
      \draw (90-360/14+360*\i/7:1) node [v] (v\i){};
      }
    \draw (0,-0.5) node [v] (v5){};
      \node at (v7) [label=above:$v$]{};
      \node at (v1) [label=above:$u$]{};
      \draw (v3)--(v2)--(v1)--(v7)--(v6)--(v4);
      \foreach \i in {7,1,2} {
      \draw (v5)--(v\i);
    }
    \draw (v7)--(v)--(v1);
    \draw (v1)--(v3)--(v6);
    \node at (v4) [label=below:$G$]{};
  \end{tikzpicture}
  $\quad\quad$
  \begin{tikzpicture}
    \draw (0,0)node[v](v){};
    \foreach \i in {1,2,3,4,6,7} {
      \draw (90-360/14+360*\i/7:1) node [v] (v\i){};
      }
    \draw (0,-0.5) node [v] (v5){};
      \node at (v7) [label=above:$u$]{};
      \node at (v1) [label=above:$v$]{};
      \draw (v3)--(v2)--(v1)--(v7)--(v6)--(v4);
      \foreach \i in {7,6,1,3} {
      \draw (v5)--(v\i);
    }
    \draw (v1)--(v3);
    \draw (v6)--(v2);
    \draw (v7)--(v)--(v1);
    \foreach \i in {2,3,6}{
      \draw (v\i)--(v);
    }
    \node at (v4) [label=below:$G\pivot uv$]{};
  \end{tikzpicture}
  \end{center}
\caption{A graph before and after pivoting an edge (the example is taken from~\cite{DDJKKOP18}).}
\label{f-pivotexample}
\end{figure}
	
To define the notions of a vertex-minor and a pivot-minor, we need some terminology. The \emph{local complementation} at a vertex~$u$ in a graph~$G$ replaces every edge of the subgraph induced by the neighbours of~$u$ with a non-edge, and vice versa.
The resulting graph is denoted by $G*u$.
An \emph{edge pivot} is the operation that takes an edge~$uv$, first applies a local complementation at~$u$, then at~$v$, and then at~$u$ again.
We denote the resulting graph $G\wedge uv=G*u*v*u$.
It is known that $G*u*v*u=G*v*u*v$~\cite{Ou05}, and thus $G\pivot uv=G \pivot vu$.
An alternative definition of the edge pivot operation is as follows.
Let~$S_u$ be the set of neighbours of~$u$ that are non-adjacent to~$v$ and let $S_v$ be the set of neighbours of~$v$ that are non-adjacent to~$u$, whereas we denote the set of common neighbours of~$u$ and~$v$ by $S_{uv}$.
We replace every edge between any two vertices in distinct sets from $\{S_u\setminus \{v\},S_v\setminus \{u\},S_{uv}\}$ by a non-edge and vice versa.
Afterwards, we delete every edge between~$u$ and~$S_u$ and add every edge between~$u$ and~$S_v$.
We also delete every edge between~$v$ and~$S_v$ and add every edge between~$v$ and~$S_u$.
We refer to \figurename~\ref{f-pivotexample} for an example.

A graph $H$ is a \emph{vertex-minor} of a graph $G$ if~$H$ can be obtained from $G$ by a sequence of local complementations and vertex deletions.
A graph $H$ is a \emph{pivot-minor} of a graph $G$ if~$H$ can be obtained from $G$ by a sequence of edge pivots and vertex deletions.
Hence $H$ is a vertex-minor of $G$ if $H$ is a pivot-minor of $G$, but the reverse is not necessarily true.
A graph is {\it $H$-vertex-minor-free} if it contains no vertex-minor isomorphic to $H$, and similarly, 
a graph is {\it $H$-pivot-minor-free} if it contains no pivot-minor isomorphic to $H$.

It is natural to ask whether parallel statements of Theorems~\ref{t-rs83} and \ref{t-rs94} exist for rank-width or linear rank-width in terms of vertex-minors or pivot-minors. 
Below we discuss the state-of-the-art for this research direction.

\subsection*{Related Work}

A \emph{circle graph} is the intersection graph of chords on a circle, and it is known that the class of circle graphs is closed under taking vertex-minors.
Bouchet~\cite{Bouchet1994} characterized circle graphs in terms of three forbidden vertex-minors. 
Oum~\cite{Ou05} showed that the class of circle graphs has unbounded rank-width, and 
asked, as an analogue to Theorem~\ref{t-rs94} for the vertex-minor relation,
whether for every circle graph $H$, the class of $H$-vertex-minor-free graphs has bounded rank-width. Recently, Geelen et al.~\cite{GKMW2019} gave an affirmative answer to this question. 

\begin{theorem}[Geelen, Kwon, McCarty, and Wollan~\cite{GKMW2019}]\label{thm:GKMW2019}
For every circle graph $H$, the class of $H$-vertex-minor-free graphs has bounded rank-width.
\end{theorem}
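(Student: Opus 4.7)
The plan is to prove a vertex-minor analog of the Robertson--Seymour grid theorem and combine it with a universality statement for circle graphs. More precisely, I would fix a natural family $(G_k)_{k\ge 1}$ of circle graphs---the \emph{$k\times k$ comparability grids} are a good candidate, because their chord diagrams are flexible enough to ``contain'' any prescribed chord pattern---and establish two ingredients: (A) a \emph{universality} statement, that every circle graph $H$ is a vertex-minor of $G_k$ for all sufficiently large~$k$; and (B) a \emph{grid theorem}, that there is a function $f$ such that every graph of rank-width greater than $f(k)$ contains $G_k$ as a vertex-minor. Granting (A) and (B), the theorem follows immediately: given a circle graph~$H$, pick $k$ with $H$ a vertex-minor of $G_k$; then any graph of rank-width greater than $f(k)$ contains $G_k$, and hence $H$, as a vertex-minor, so the class of $H$-vertex-minor-free graphs has rank-width at most~$f(k)$.

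For (A), I would start from a chord diagram realising~$H$ and embed it into the canonical chord diagram of the comparability grid~$G_k$ for $k$ sufficiently large relative to $\abs{V(H)}$; local complementations and vertex deletions on the grid then reduce it down to~$H$. This is a combinatorial chord-diagram manipulation which requires care, but for each fixed~$H$ it is essentially a bounded, ``finitary'' argument, and I do not expect it to be the main difficulty.

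The crucial and most difficult step is (B). My approach would be to work within the tangle framework for the cut-rank function $\cutrk$, which plays the role that matroid connectivity plays in matroid minor theory. A graph of large rank-width gives rise to a tangle of large order in this framework; the standard strategy is then to refine the tangle step by step, producing a nested sequence of separations whose ``linked'' structure captures a rigid combinatorial object. The hard part is ensuring that what one extracts is precisely the comparability grid $G_k$ as a vertex-minor, rather than merely a substructure of bounded cut-rank: one must upgrade bounded cut-rank information into explicit edge pivots and local complementations that realise $G_k$. I expect this step to require a delicate marriage of Oum's canonical-decomposition techniques for vertex-minors with the matroid-minor machinery for binary matroids (using the correspondence between cut-rank and the connectivity function of a binary matroid), and this is where the main obstacle lies.
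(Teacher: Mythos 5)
This statement is not proved in the paper at all: it is imported verbatim from Geelen, Kwon, McCarty, and Wollan~\cite{GKMW2019}, so there is no in-paper argument to compare against. Your outline does correctly mirror the architecture of the actual proof in that reference: they establish exactly your ingredient (B) --- a vertex-minor grid theorem asserting that every graph of sufficiently large rank-width contains the $k\times k$ comparability grid as a vertex-minor --- and combine it with the universality of comparability grids among circle graphs, which is your ingredient (A). The final deduction of the theorem from (A) and (B) that you give is correct and is how the result is obtained.

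The genuine gap is that your step (B) is not a proof but a statement of the problem. The vertex-minor grid theorem is the entire content of the Geelen--Kwon--McCarty--Wollan paper, a long and technically demanding argument; describing it as ``refine the tangle step by step'' and ``upgrade bounded cut-rank information into explicit local complementations'' identifies where the difficulty sits but does not resolve it. In particular, the passage from a tangle of large order in the cut-rank connectivity function to an \emph{explicit} comparability grid vertex-minor cannot be waved through via generic linked-decomposition arguments: the known matroid machinery for binary matroids does not directly produce vertex-minors, and bridging that mismatch is precisely the innovation of~\cite{GKMW2019}. As submitted, your proposal reduces the theorem to an unproven statement that is at least as hard as the theorem itself, so it does not constitute a proof. (Step (A) is also only sketched, but it is indeed the routine part, and your instinct that it is a bounded chord-diagram manipulation for each fixed $H$ is sound.)
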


\noindent
Every pivot-minor of a graph is also a vertex-minor. Hence, for every graph~$H$, the class of $H$-vertex-minor-free graphs is contained in the class of $H$-pivot-minor-free graphs. This leads to the question whether we can strengthen Theorem~\ref{thm:GKMW2019} by replacing the vertex-minor relation with the pivot-minor relation. However, this is not the case.  
In order to see this, we first observe that
bipartite graphs are closed under taking pivot-minors~\cite{Ou05}. 
Hence, no bipartite graph contains a non-bipartite circle graph as a pivot-minor. 
Now consider the class~${\cal G}$ of $n\times n$ grids, which has unbounded rank-width.
As ${\cal G}$ is a subclass of bipartite graphs,
${\cal G}$ is $H$-pivot-minor-free for every non-bipartite circle graph~$H$ (such as, for example, $H=K_3$). Hence, for every non-bipartite graph $H$, the class of $H$-pivot-minor-free graphs has unbounded rank-width. This means we can only hope to strengthen Theorem~\ref{thm:GKMW2019} by considering bipartite circle graphs~$H$, and Oum~\cite{Oum2009} conjectured the following analogue to Theorem~\ref{t-rs94} for the pivot-minor relation:

\begin{conjecture}[Oum~\cite{Oum2009}]\label{conj1}
For every bipartite circle graph $H$, the class of $H$-pivot-minor-free graphs has bounded rank-width.
\end{conjecture}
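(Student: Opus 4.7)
The plan is to reduce Conjecture~\ref{conj1} to Theorem~\ref{thm:GKMW2019} in two steps. The first step handles bipartite inputs. A classical fact from Oum's work on rank-width is that for a bipartite graph $G$ and a bipartite graph $H$, $H$ is a pivot-minor of $G$ if and only if $H$ is a vertex-minor of $G$: any sequence of local complementations transforming $G$ into the bipartite graph $H$ can be reorganised into a sequence of pivots, since pivots preserve bipartiteness whereas isolated local complementations at vertices of degree at least two typically do not. Consequently, among bipartite graphs the $H$-pivot-minor-free class coincides with the $H$-vertex-minor-free class whenever $H$ is bipartite, and Theorem~\ref{thm:GKMW2019} applied to the (bipartite) circle graph $H$ immediately gives bounded rank-width.

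The second step is a reduction from a general input to a bipartite one. Concretely, the aim is to establish a lemma of the following form: there exists a function $f$ such that every graph of rank-width at least $f(k)$ has a bipartite pivot-minor of rank-width at least $k$. Granted such a lemma, if $G$ is $H$-pivot-minor-free for a bipartite circle graph $H$, then any bipartite pivot-minor $G'$ of $G$ is still $H$-pivot-minor-free, so $\rw(G')$ is bounded by Step~1, and therefore so is $\rw(G)$.

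An alternative route for the bipartite half is via the bijection between bipartite graphs and simple binary matroids (the fundamental-graph construction), under which pivots correspond to basis exchanges, pivot-minors correspond to matroid minors, rank-width matches branch-width up to an additive constant, and bipartite circle graphs correspond to planar binary matroids by a theorem of Bouchet. The Grid Theorem for binary matroids due to Geelen, Gerards and Whittle then supplies bounded branch-width, which translates back to bounded rank-width; this alternative still handles only bipartite inputs, so the reduction from arbitrary graphs remains necessary.

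The main obstacle is clearly the bipartite reduction lemma itself. Standard extraction results produce induced subgraphs or vertex-minors of large rank-width inside a graph of large rank-width, but the pivot-minor relation is strictly finer than the vertex-minor relation, and no general technique is currently available for extracting a bipartite pivot-minor of rank-width comparable to that of the whole graph. One may attempt to combine Ramsey-type arguments with the recent linear $\chi$-boundedness result of Ne\v{s}et\v{r}il et al.\ to locate a bipartite induced subgraph (which is a pivot-minor obtained by vertex deletions alone) of large rank-width, but controlling the loss of rank-width when passing to this bipartite substructure appears to require new structural insight, and is presumably the reason Conjecture~\ref{conj1} has remained open despite the recent resolution of its vertex-minor analogue.
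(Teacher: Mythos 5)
The statement you are trying to prove is Conjecture~\ref{conj1}, which the paper explicitly presents as an \emph{open conjecture} of Oum; the paper contains no proof of it, and only records that it has been confirmed in the special cases of bipartite graphs, circle graphs, and line graphs. So there is no ``paper's own proof'' to compare against, and the real question is whether your proposal closes the conjecture. It does not.

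Your Step~1 (the bipartite case) is essentially sound and is in fact how the known partial result for bipartite graphs is obtained in~\cite{Ou05}: via the correspondence between bipartite graphs and binary matroids, under which pivot-minors become matroid minors, bipartite circle graphs become planar binary matroids (Bouchet / de Fraysseix), and the grid theorem for binary matroids of Geelen, Gerards and Whittle yields bounded branch-width, hence bounded rank-width. The fatal gap is Step~2. The ``bipartite reduction lemma'' you posit --- that every graph of sufficiently large rank-width contains a bipartite pivot-minor of large rank-width --- is not a known result, you give no proof of it, and you yourself concede at the end that no technique is available for extracting such a pivot-minor. Since the bipartite case is already settled, this missing lemma \emph{is} the conjecture (up to the known bipartite case); stating it as an ingredient and then observing that it seems hard is not a reduction but a restatement of the open problem. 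Your fallback suggestion (find a bipartite \emph{induced subgraph} of large rank-width via Ramsey or $\chi$-boundedness arguments) also cannot work as stated: for instance, complete graphs have rank-width~$1$ restrictions on their bipartite induced subgraphs that carry no rank-width, and more generally there is no reason an induced bipartite subgraph retains any fixed fraction of the rank-width without first performing pivots. As it stands the proposal proves nothing beyond the already-known bipartite case.
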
 

\noindent
So far, Conjecture~\ref{conj1} has been verified for bipartite graphs~\cite{Ou05}, circle graphs~\cite{Oum2009}, and line graphs~\cite{Oum2009}.
If Conjecture~\ref{conj1} holds for all graphs, then this would imply both Theorems~\ref{t-rs94} 
and~\ref{thm:GKMW2019}~\cite{Ou05}.

We now turn to linear rank-width, for which Kant\'e and Kwon~\cite{KanteK2018} conjectured the following analogue to Theorem~\ref{t-rs83} for the vertex-minor relation:

\begin{conjecture}[Kant\'e and Kwon~\cite{KanteK2018}]\label{conj2}
For every tree $T$, the class of $T$-vertex-minor-free graphs has bounded linear rank-width.
\end{conjecture}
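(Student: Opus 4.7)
The plan is to build on Theorem~\ref{thm:GKMW2019} in three steps.

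\emph{Step 1: Bounded rank-width.} Every tree is a circle graph (this is a classical observation: arrange the tree's half-edges around a circle following a DFS traversal and draw each edge as the chord joining its two occurrences). Theorem~\ref{thm:GKMW2019} then gives a constant $c=c(T)$ such that every $T$-vertex-minor-free graph has rank-width at most $c$. The conjecture thus reduces to proving that, inside the class of graphs of rank-width at most $c$, excluding $T$ as a vertex-minor bounds the linear rank-width.

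\emph{Step 2: A canonical-obstruction theorem for linear rank-width.} I would next aim to establish (or invoke) a dichotomy of the following shape: there exists a family $\{F_n\}_{n\ge 1}$ of graphs with $\rw(F_n)$ uniformly bounded and $\lrw(F_n)\to\infty$, such that every class of bounded rank-width and unbounded linear rank-width contains arbitrarily large $F_n$ as vertex-minors. Natural candidates for $F_n$ are ``fat'' caterpillar-like graphs, suggested by the linear-rank-width-$1$ characterisations of Ganian~\cite{Ga11} and Adler--Farley--Proskurowski~\cite{AFP13}, and by the FPT machinery of Jeong--Kim--Oum~\cite{JKO16} and Kant\'e--Kwon~\cite{KanteK2018}. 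If such a sequence is available, it only remains to prove that, for every tree $T$, the graph $F_n$ contains $T$ as a vertex-minor once $n$ is large enough.

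\emph{Step 3: Embedding arbitrary trees into $F_n$.} I would prove the remaining embedding statement by induction on $|V(T)|$, attaching one leaf of $T$ at a time. Having realised $T$ minus a leaf $\ell$ inside some portion of $F_n$, a carefully chosen local complementation at a spine vertex of $F_n$, or an edge pivot $G\pivot uv$ on a branch edge, would create the new leaf $\ell$ while preserving the previously realised edges; the now-redundant vertices of $F_n$ are then deleted. The branches of $F_n$, being arbitrarily long and numerous, provide the ``reservoir'' that sustains the induction.

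\emph{Main obstacle.} The real difficulty lies in Step~3. Unlike the minor setting, where embedding a tree is essentially a subtree search, each local complementation in the vertex-minor setting toggles an entire neighbourhood, threatening simultaneously to destroy already placed edges and to introduce parasitic edges between the branches of the partial embedding. Controlling these side effects will require choosing $F_n$ so that its spine and branches live in well-separated parts of the graph, combined with careful use of the commutation identities $G*u*v*u = G*v*u*v$ and $G\pivot uv = G\pivot vu$, and of the fact that pivoting preserves bipartiteness when $F_n$ is bipartite. Step~2 is a secondary bottleneck, since a canonical-obstruction theorem for linear rank-width at this level of generality does not yet appear in the literature and may itself require new ideas.
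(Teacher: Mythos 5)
The statement you are addressing is presented in the paper as Conjecture~\ref{conj2}, attributed to Kant\'e and Kwon; the paper offers no proof of it, and it remains open. The paper only reports it as confirmed in special cases: for graph classes whose prime graphs with respect to split decompositions have bounded linear rank-width (in particular distance-hereditary graphs)~\cite{KanteK2018}, and for $T$ a path~\cite{KMOW19}. So there is no proof in the paper to compare against, and your proposal has to stand on its own as a complete argument --- which it does not.

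Your Step~1 is sound: every tree is a circle graph, so Theorem~\ref{thm:GKMW2019} legitimately reduces the conjecture to classes of bounded rank-width. But Steps~2 and~3 are exactly where the open problem lives, and you concede this yourself. Step~2 posits a canonical-obstruction theorem --- a universal family $\{F_n\}$ of bounded rank-width and unbounded linear rank-width such that every class of bounded rank-width and unbounded linear rank-width contains arbitrarily large $F_n$ as vertex-minors --- which does not exist in the literature and is itself at least as hard as the conjecture restricted to bounded rank-width; you name no candidate $F_n$, let alone verify its universality. Step~3 then relies on unspecified structural properties of this unspecified $F_n$ (long, numerous, well-separated branches) and sketches an induction whose central difficulty --- that a single local complementation toggles a whole neighbourhood and can destroy the partial embedding --- is explicitly left unresolved. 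A plan whose two key steps are each flagged as possibly requiring new ideas is a research programme, not a proof. If you want to make concrete partial progress in the spirit of what is actually known, the available machinery is the canonical split decomposition route used in the paper for the distance-hereditary case (Lemma~\ref{l-bouch}, Proposition~\ref{Kantek2018}, and the argument of Theorem~\ref{thm:pmdh}), not a hypothetical obstruction theorem for general bounded rank-width.
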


\noindent
So far, Conjecture~\ref{conj2} has been verified on every class of graphs whose prime graphs, with respect to split decompositions, have bounded linear rank-width~\cite{KanteK2018}. For example, prime distance-hereditary graphs have at most three vertices, and therefore, for every tree $T$, the class of $T$-vertex-minor-free distance-hereditary graphs has bounded linear rank-width.
Moreover, Conjecture~\ref{conj2} holds for every path $T$~\cite{KMOW19}.

\subsection*{Our Focus and Results}

We focus on the remaining analogue, namely the analogue to Theorem~\ref{t-rs83} for the pivot-minor relation. We first prove that we cannot hope for a result that holds for every tree~$T$.
 A {\em caterpillar} is a tree that contains a path~$P$, such that every vertex not on~$P$ has a neighbour in~$P$.

\begin{theorem}\label{t-main1}
If~$T$ is a tree that is not a caterpillar, then the class of $T$-pivot-minor-free distance-hereditary graphs has unbounded linear rank-width.
\end{theorem}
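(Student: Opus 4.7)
The plan is to exhibit, for every tree~$T$ that is not a caterpillar, a family $\{G_n\}_{n \ge 1}$ of distance-hereditary graphs with $\lrw(G_n) \to \infty$ such that no $G_n$ contains~$T$ as a pivot-minor.

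The first step would be to reduce the problem to a single universal forbidden pivot-minor, namely the spider $S_{2,2,2}$ obtained by subdividing each edge of $K_{1,3}$ exactly once. A tree is a caterpillar if and only if removing all its leaves yields a path; hence if $T$ is not a caterpillar, then after deleting its leaves some vertex still has degree at least three in the remaining subtree, and extending each of its three internal neighbours by one additional neighbour in~$T$ exhibits $S_{2,2,2}$ as an induced subgraph of~$T$. Because induced subgraphs are pivot-minors and the pivot-minor relation is transitive, any graph with~$T$ as a pivot-minor also has $S_{2,2,2}$ as a pivot-minor. So it suffices to build a single family $\{G_n\}$ of distance-hereditary graphs with unbounded linear rank-width that avoids $S_{2,2,2}$ as a pivot-minor; this family will then serve uniformly for every non-caterpillar~$T$.

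Next I would choose $G_n$ to be a \emph{path-like} distance-hereditary graph, i.e.\ a graph whose canonical split decomposition is a single path of $n$ bags (each a clique or a star) glued at marked vertices. Such $G_n$ is manifestly distance-hereditary since it is built from $K_1$ by pendant and twin additions along a linear skeleton. By choosing the sequence of bag types carefully and invoking the split-decomposition-based analysis of linear rank-width of distance-hereditary graphs developed by Kant\'e and Kwon~\cite{KanteK2018}, one can arrange that $\lrw(G_n)$ grows without bound in~$n$ while the canonical split tree remains a path.

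The technical heart of the proof is to show that $S_{2,2,2}$ is not a pivot-minor of~$G_n$. The guiding idea is that pivoting an edge of a distance-hereditary graph corresponds to a controlled local modification of the canonical split decomposition, such as relabelling a bag or swapping star- and clique-types at the pivoted edge, and vertex deletions merely prune the split tree. None of these operations introduces a new branching internal node into a previously path-shaped split tree; consequently, the canonical split tree of every pivot-minor of~$G_n$ remains a path. Applied to a pivot-minor of~$G_n$ that happens to be a tree, this forces the tree to be a caterpillar, because a tree is a caterpillar if and only if its canonical split tree is a path. The spider $S_{2,2,2}$ is not a caterpillar, and so cannot appear as a pivot-minor of any~$G_n$.

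The main obstacle I anticipate is the last step: rigorously tracking how each pivot acts on the canonical split decomposition. The argument will require a case analysis on the position of the pivoted edge with respect to the split tree---whether it lies inside a clique-bag, inside a star-bag, or across a marked edge between two adjacent bags of possibly different types---together with a careful verification that the path-like shape of the split tree is preserved in every case. This is where I would lean on existing descriptions of how split decompositions transform under local complementations and pivots.
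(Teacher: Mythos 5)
Your reduction to $S_{2,2,2}$ is sound (every non-caterpillar tree contains $S_{2,2,2}$ as an induced subgraph, hence as a pivot-minor, so it suffices to avoid $S_{2,2,2}$), and your instinct that the argument should run through canonical split decompositions and their behaviour under pivots is exactly right. But the construction at the heart of your proposal cannot exist. You want distance-hereditary graphs $G_n$ whose canonical split decomposition is a \emph{path} of bags yet whose linear rank-width is unbounded. By the characterization of Kant\'e and Kwon (Lemma~\ref{thm:charlrw1split} in the paper), a connected distance-hereditary graph whose decomposition tree is a path has linear rank-width at most~$1$; more robustly, Proposition~\ref{Kantek2018} gives $\lrw(G)\le \pw(T_D)+1\le 2$ whenever the decomposition tree $T_D$ is a path. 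So $\lrw(G_n)$ is bounded for any such family, and the whole plan collapses: to get unbounded linear rank-width you are \emph{forced} to let the decomposition tree branch heavily (its path-width must be unbounded), and then your key invariant ``the split tree stays a path under pivots and deletions'' is vacuous because it never held to begin with.

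The paper escapes this tension by allowing branching but controlling \emph{where} it happens: it takes the class $\mathcal{M}$ of distance-hereditary graphs in which every branching bag of the canonical split decomposition is a \emph{complete} bag. This class contains the local-complementation images of $1$-subdivisions of arbitrary trees (the class $\mathcal{C}$), hence has unbounded linear rank-width, and it is closed under pivot-minors (this is the case analysis you anticipated, but the preserved invariant is ``branching bags are complete,'' not ``the tree is a path''). The punchline is then that a \emph{tree} lying in $\mathcal{M}$ has only star bags in its canonical split decomposition (Bouchet), so it has no branching bags at all, so its decomposition tree is a path, so it has linear rank-width at most~$1$ and must be a caterpillar. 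If you want to salvage your write-up, replace ``path-shaped split tree'' by this weaker invariant throughout; as written, the middle step of your argument asks for a family that the very lemmas you plan to invoke rule out.
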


\noindent
Due to Theorem~\ref{t-main1}, we conjecture the following:

\begin{conjecture}\label{q-1}
For every \emph{caterpillar} $T$, the class of $T$-pivot-minor-free graphs has bounded linear rank-width.
\end{conjecture}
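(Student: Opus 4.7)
The plan is to attack Conjecture~\ref{q-1} by induction on $\abs{V(T)}$, using the paper's result for caterpillars on at most four vertices as the base case. For the inductive step one works with the contrapositive: show that any graph~$G$ of sufficiently large linear rank-width must contain the target caterpillar~$T$ as a pivot-minor. Writing $T$ as a smaller caterpillar~$T'$ with one additional leaf~$\ell$ attached to a spine vertex~$s$, one can then try to upgrade a $T'$-pivot-minor found inside~$G$ (guaranteed by the inductive hypothesis together with high $\lrw(G)$) to a $T$-pivot-minor by exhibiting an extra vertex adjacent to a copy of $s$ that survives an appropriate sequence of pivots and deletions.

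First, I would localize the problem to bipartite graphs. Since bipartite graphs are closed under pivot-minors and the pivot-minor relation there corresponds to binary matroid minors via the fundamental-graph construction, Conjecture~\ref{conj1} and the machinery developed around it apply uniformly. On this bipartite side, I would look for an unavoidable pivot-minor in graphs of high linear rank-width, in the spirit of Ganian's characterization of $\lrw\le 1$ as thread graphs and its higher-width analogs, and then verify that such an unavoidable pivot-minor contains every caterpillar. Passage from general $T$-pivot-minor-free graphs back to a bipartite reduction can be attempted via the bipartite double cover or by organizing vertices into pivot-equivalence classes.

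A parallel line of attack is through split decompositions, generalizing the strategy that worked for distance-hereditary graphs under Conjecture~\ref{conj2}. If one can bound the linear rank-width of the prime split-components of any $T$-pivot-minor-free graph in terms of~$T$, then standard composition results for split decompositions propagate the bound to the whole graph. The present paper's successful approach for $T=P_4$ and $T=K_{1,3}$, namely embedding the class into some $(H_1,H_2)$-free class and controlling the latter, suggests that for each caterpillar~$T$ one should search for a finite list of forbidden induced subgraphs that captures the prime components, and then handle those classes case by case, possibly using the already-established fact that $(K_3,S_{1,2,2})$-free graphs have bounded linear rank-width as a template.

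The hardest part, I expect, is that the pivot operation is strictly more restrictive than local complementation: a pivot at~$uv$ only touches vertices in the neighbourhoods of~$u$ and~$v$, so one cannot freely isolate vertices into leaves of a prescribed caterpillar. In particular, caterpillars with many leaves at a single spine vertex (e.g.~subdivided stars) seem to require the simultaneous presence of a long alternating spine structure \emph{and} a large pool of "leaf candidates" that are pivot-reachable from the chosen spine vertices without destroying one another. Guaranteeing both from mere high linear rank-width amounts to a pivot-minor analogue of the grid theorem, which is essentially the content of the still-open Conjecture~\ref{conj1}; any proof of Conjecture~\ref{q-1} in full generality will likely have to either go through such a structural theorem or find an inductive bypass exploiting the caterpillar shape of~$T$.
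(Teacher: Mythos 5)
The statement you are asked to prove is stated in the paper as Conjecture~\ref{q-1} and is \emph{not} proved there: the paper only establishes it for distance-hereditary graphs (Theorem~\ref{thm:pmdh}) and for caterpillars on at most four vertices (Corollary~\ref{cor:p4pm} and Theorem~\ref{t-claw}). Your text is likewise not a proof but a survey of candidate strategies, and each of them has a concrete gap. The most serious one is in your proposed inductive step. The inductive hypothesis for $T'$ is purely quantitative: it says that every $T'$-pivot-minor-free graph has linear rank-width at most some $f(T')$, equivalently that every graph with $\lrw(G)>f(T')$ contains \emph{some} pivot-minor isomorphic to $T'$. It gives you no control over where that copy sits, which vertices of $G$ survive the pivots and deletions producing it, or whether a surviving vertex playing the role of the spine vertex $s$ has an unused neighbour left over to serve as the new leaf $\ell$. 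To ``upgrade'' a $T'$-pivot-minor to a $T$-pivot-minor you would need a rooted or linked strengthening of the inductive hypothesis (many attachable copies, or a copy avoiding a prescribed small set), and no such strengthening is available; indeed the paper's own Theorem~\ref{t-main1} shows that bounded linear rank-width fails entirely once $T$ acquires a vertex with three long legs, so the leaf-attachment step cannot be routine.

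The auxiliary reductions you invoke also do not exist in the form you need. There is no known reduction of the general conjecture to the bipartite case: the bipartite double cover is not compatible with the pivot-minor order in the required direction, and ``pivot-equivalence classes'' do not partition a graph into bipartite pieces. The split-decomposition route works in the paper precisely because distance-hereditary graphs have prime components of at most three vertices (Lemma~\ref{l-bouch} and Proposition~\ref{Kantek2018}); for general $T$-pivot-minor-free graphs the prime components are arbitrary prime graphs, and bounding \emph{their} linear rank-width is the whole difficulty, not a step one can delegate to ``standard composition results.'' Your closing observation is the accurate one: a proof in full generality appears to require new structural machinery (an unavoidable-pivot-minor or grid-type theorem for linear rank-width), and the paper deliberately leaves the conjecture open for caterpillars on five or more vertices, with $T=P_5$ named as the first open case.
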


\noindent
In contrast, the aforementioned result of Kwon et al.~\cite{KMOW19} confirming Conjecture~\ref{conj2} if $T$ is a path implies that Conjecture~\ref{conj2} {\it has} been confirmed if $T$ is a caterpillar: every caterpillar~$T$ is a pivot-minor of some path $P$~\cite[Theorem 4.6]{ko14} and consequently, if $T$ is a caterpillar, then the class of $T$-vertex-minor-free graphs is contained in the class of $P$-vertex-minor-free graphs.

By the fact that every caterpillar~$T$ is a pivot-minor of some path $P$ and the fact that every path $P$ is a caterpillar by definition, we can also formulate Conjecture~\ref{q-1} as follows:

\medskip
\noindent
{\bf Conjecture~\ref{q-1} (alternative formulation).}
{\it For every \emph{path} $P$, the class of $P$-pivot-minor-free graphs has bounded linear rank-width.}

\medskip
\noindent
We make two contributions to Conjecture~\ref{q-1}. We first show, in Section~\ref{pndh}, that Conjecture~\ref{q-1} holds for distance-hereditary graphs.

\begin{theorem}\label{thm:pmdh}
Let $n\ge 3$ be an integer. Every $P_n$-pivot-minor-free distance-hereditary graph has linear rank-width at most $2n-5$.
\end{theorem}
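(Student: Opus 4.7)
I would prove Theorem~\ref{thm:pmdh} by induction on $n$. The base case $n=2$ is immediate: a $P_2$-pivot-minor-free graph has no edges, so $\lrw(G)=0\le 1=2n-3$. The case $n=3$ is also straightforward, since a $P_3$-pivot-minor-free distance-hereditary graph is a disjoint union of complete graphs and hence has $\lrw\le 1$. Throughout, since $\lrw$ of a graph equals the maximum of $\lrw$ over its connected components, we may assume $G$ is connected.

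For the inductive step, the main tool is the canonical split decomposition $D(G)$ of the distance-hereditary graph $G$, which is a tree whose bags are complete graphs or stars. The plan is to locate a \emph{peripheral} set $X\subseteq V(G)$ corresponding to a leaf bag of $D(G)$ having two properties: (i) $G':=G-X$ is $P_{n-1}$-pivot-minor-free, and (ii) the bipartite adjacency between $X$ and $V(G')$ is carried by a single marked edge of $D(G)$ and so has rank at most $1$ as a matrix. Granted such an $X$, the inductive hypothesis yields a linear layout $\sigma'$ of $V(G')$ of width at most $2(n-1)-3=2n-5$; appending the (at most two) vertices of $X$ at one end of $\sigma'$ raises the cutrank of every prefix cut by at most $2$, namely at most $1$ from property~(ii), plus at most $1$ for the interior of $X$. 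This yields $\lrw(G)\le\lrw(G')+2\le 2n-3$, as desired.

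The heart of the argument, and the main obstacle, is the structural lemma that a suitable peripheral $X$ always exists. I would establish it by contraposition: if every candidate leaf bag failed to produce a $P_{n-1}$-pivot-minor-free residue $G'$, then starting from a hypothetical $P_{n-1}$ pivot-minor of some $G'=G-X$ one could extend the witnessing induced path by one vertex drawn from $X$, using a carefully chosen pivot that crosses the marked edge separating $X$ from the rest of $D(G)$, to produce a $P_n$ pivot-minor of $G$ and contradict the hypothesis. The delicate bookkeeping involves a case analysis on whether the relevant leaf bag is a clique or a star, and on the position of the attachment vertex relative to the induced path witnessing the $P_{n-1}$ pivot-minor. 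Precisely tracking how pivots in $G$ interact with the structural operations on $D(G)$ is the technically trickiest part, but this is natural in the distance-hereditary setting because pivoting preserves the class and corresponds to local modifications of the decomposition tree.
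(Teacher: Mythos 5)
Your induction rests on a structural lemma that you do not prove and that, as stated, is false or at best unjustified: that some leaf bag $X$ of the canonical split decomposition satisfies ``$G-X$ is $P_{n-1}$-pivot-minor-free''. Your contrapositive sketch has two independent problems. First, you need only \emph{one} good leaf bag, yet the argument you describe would have to show that an \emph{arbitrary} leaf bag works, since deriving a contradiction from ``every candidate fails'' by inspecting a single $X$ proves the stronger universal statement. Second, and more seriously, the extension step does not go through: a $P_{n-1}$ pivot-minor of $G-X$ is witnessed by pivots and deletions that may erase all of $N_G(X)$ or place the attachment vertex of $X$ in the interior of (or off) the resulting path, so there is no way to append a vertex of $X$ to an endpoint. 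A concrete obstruction is a subdivided star with three equal legs (e.g.\ $S_{2,2,2}$, whose decomposition has a central star bag and three leaf bags): deleting any leaf bag leaves the other two legs intact, so $G-X$ contains an induced path exactly as long as the longest induced path of $G$, for every choice of $X$. Minor further issues: leaf bags may contain arbitrarily many unmarked vertices, not ``at most two'', and your base case $n=2$ reads the hypothesis as ``no edges'' when $P_2$-pivot-minor-free already forces the empty graph; these are repairable, but the structural lemma is the load-bearing step and it is missing.

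The paper's proof is not inductive and avoids this difficulty entirely. It shows directly that the canonical split decomposition $D$ of a $P_n$-pivot-minor-free distance-hereditary graph contains no path of $2n-4$ bags: on any such path at least $n-2$ bags are stars (no two complete bags are adjacent), and a sequence of pivots arranges that each such star's centre has no neighbour in the adjacent bags along the path, after which the centres together with two end leaves form an induced path on at least $n$ vertices in a graph pivot-equivalent to $G$, a contradiction. Hence the decomposition tree $T_D$ excludes a long path, so by Theorem~\ref{t-brst91} it has small path-width, and Proposition~\ref{Kantek2018} converts $\pw(T_D)$ into a bound on $\lrw(G)$. If you want to salvage a local/peeling argument, you would need to argue about long paths of bags globally rather than about a single peripheral bag.
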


\noindent
Theorems~\ref{t-main1} and~\ref{thm:pmdh}, together with the fact that every caterpillar  is a pivot-minor of some path, yields the following dichotomy.

\begin{corollary}\label{c-dicho}
For every tree $T$, the class of $T$-pivot-minor-free distance-hereditary graphs has bounded linear rank-width if and only if $T$ is a caterpillar.
\end{corollary}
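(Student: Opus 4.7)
The plan is to prove the two directions of the equivalence separately, each as a short deduction from results already established in the excerpt.

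For the ``only if'' direction, I would argue by contrapositive: suppose $T$ is a tree that is \emph{not} a caterpillar. Then Theorem~\ref{t-main1} directly states that the class of $T$-pivot-minor-free distance-hereditary graphs has unbounded linear rank-width, so the class cannot have bounded linear rank-width. No further work is required in this direction.

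For the ``if'' direction, assume $T$ is a caterpillar. The key fact to cite is \cite[Theorem~4.6]{ko14}, which ensures that $T$ is a pivot-minor of some path $P_n$. Now I would use transitivity of the pivot-minor relation: any graph $G$ that contains $P_n$ as a pivot-minor also contains $T$ as a pivot-minor, because a pivot-minor of a pivot-minor is a pivot-minor. Contrapositively, every $T$-pivot-minor-free graph is $P_n$-pivot-minor-free. Consequently, the class of $T$-pivot-minor-free distance-hereditary graphs is contained in the class of $P_n$-pivot-minor-free distance-hereditary graphs, which by Theorem~\ref{thm:pmdh} has linear rank-width at most $2n-3$. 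Hence the class of $T$-pivot-minor-free distance-hereditary graphs has bounded linear rank-width, with a bound depending only on $T$.

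The argument is essentially a packaging of three ingredients (Theorem~\ref{t-main1}, Theorem~\ref{thm:pmdh}, and \cite[Theorem~4.6]{ko14}), so there is no real technical obstacle. The only point worth stating carefully is the direction of the implication for the ``if'' direction: being a pivot-minor of $P_n$ makes $T$ \emph{easier} to find, hence $T$-pivot-minor-freeness is the \emph{stronger} condition and the class of $T$-pivot-minor-free graphs is a \emph{subclass} of the class of $P_n$-pivot-minor-free graphs. Once this is noted, the corollary follows in a few lines.
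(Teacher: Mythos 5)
Your proposal is correct and matches the paper's own derivation exactly: the ``only if'' direction is Theorem~\ref{t-main1}, and the ``if'' direction combines the fact that every caterpillar is a pivot-minor of some path (via transitivity of the pivot-minor relation) with Theorem~\ref{thm:pmdh}. Nothing further is needed.
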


\noindent
If a graph $G$ is $P_4$-pivot-minor-free, then $G$ has no induced subgraph isomorphic to $P_4$. This 
implies 
that $G$ is distance-hereditary.
Hence, Theorem~\ref{thm:pmdh} has the following consequence:

\begin{corollary}\label{cor:p4pm}
Every $P_4$-pivot-minor-free graph has linear rank-width
at most~$3$.
\end{corollary}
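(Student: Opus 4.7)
The plan is to derive the corollary as an immediate specialization of Theorem~\ref{thm:pmdh}. The key (and elementary) observation, already signalled by the authors, is that vertex deletion is a special case of the pivot-minor operation, so every $P_4$-pivot-minor-free graph $G$ in particular has no induced $P_4$, i.e.\ is a cograph. Cographs are a subclass of distance-hereditary graphs, so Theorem~\ref{thm:pmdh} applies to $G$ with $n = 4$, immediately giving a finite linear rank-width bound.

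Substituting $n = 4$ directly into the formula $2n-3$ of Theorem~\ref{thm:pmdh} gives $5$, whereas the corollary advertises the sharper bound $3$; the only real work is therefore to close this gap of $2$. I would first inspect the inductive proof of Theorem~\ref{thm:pmdh} (not visible in this excerpt) to see whether, when the graph is additionally a cograph, its recursion already yields the tighter bound $n-1 = 3$. Such a sharpening is plausible because the canonical split decomposition of a cograph has trivial prime quotients, so each inductive gluing step should cost only $+1$ in cut-rank rather than $+2$. Failing this route, I would argue directly on cographs: take a linear layout obtained by traversing the cotree of $G$ with a fixed rule (for instance, always visiting the smaller subtree first), and verify cut by cut that every prefix induces a cut-rank of at most $3$.

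The main obstacle is precisely this constant-sharpening step, and in the direct cotree route it reduces to a structural lemma: $P_4$-pivot-minor-freeness severely restricts which joins can appear in the cotree of $G$, because a join between two sufficiently rich subgraphs admits a pivot that exposes $P_4$ as a pivot-minor (the $K_{3,3}$ computation in Section~\ref{s-intro} already illustrates the mechanism). Formulating this structural lemma precisely enough to drive the cut-rank along the layout down to $3$ is where the substantive effort lies; once it is in hand, the remainder is routine bookkeeping along the cotree traversal.
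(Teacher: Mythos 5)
Your reduction is exactly the paper's: $P_4$-pivot-minor-freeness forbids $P_4$ as an induced subgraph (vertex deletion being a pivot-minor operation), hence $G$ is a cograph and in particular distance-hereditary, and Theorem~\ref{thm:pmdh} applies. The difference lies entirely in the constant, and here the ``gap of $2$'' you set out to close is an artifact of an inconsistency in the paper itself: the introduction states the bound as $2n-3$ with $n\ge 2$, but the restatement and the actual proof in Section~\ref{pndh} deliver $2n-5$ for $n\ge 3$ (the decomposition tree has no path on $2n-4$ bags, hence path-width at most $2n-6$, hence linear rank-width at most $2n-5$). Plugging $n=4$ into the proved bound gives $3$ immediately, so the corollary needs no sharpening step at all; the paper simply cites the theorem and stops.

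That said, judged on its own terms your proposal does not yet prove the stated bound. From the $2n-3$ version you only get $5$, and the two routes you sketch for improving it are left as programmes rather than arguments: you do not verify that the theorem's recursion tightens for cographs, and the cotree argument rests on an unformulated ``structural lemma'' about joins. (Also, there is no $K_{3,3}$ computation in Section~\ref{s-intro} to appeal to.) The concrete fix is simply to use the theorem in the form its proof actually establishes, $\lrw(G)\le 2n-5$; alternatively, the paper's second, independent route (every $P_4$-pivot-minor-free graph is $(P_4,\text{dart})$-free, and $(P_4,H)$-free classes with $H$ a threshold graph have bounded linear rank-width by Brignall, Korpelainen and Vatter) gives boundedness but not the explicit constant~$3$.
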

\noindent
Below we give a short alternative proof of Corollary~\ref{cor:p4pm}
(without an explicit bound)
after introducing a more general strategy.
A graph~$G$ is {\it $H$-free} if $G$ does not contain the graph $H$ as an induced subgraph, and  $G$ is {\it $(H_1,\ldots,H_p)$-free} for some set of graphs $\{H_1,\ldots,H_p\}$ if $G$ is $H_i$-free for every $i\in \{1,\ldots,p\}$.
We can now try to obtain for a caterpillar~$T$, a constant bound on the linear rank-width of a $T$-pivot-minor-free graph by adapting the following {\it general} strategy:

\bigskip
\noindent
\begin{boxedminipage}{.92\textwidth}
{\bf Step 1.} Show that the class of $T$-pivot-minor-free graphs is a subclass of a class of\\
\hspace*{1.3cm} $(H_1,H_2)$-free graphs for some graphs $H_1$ and $H_2$.\\[3pt]
{\bf Step 2.} Show that this class of $(H_1,H_2)$-free graphs has bounded linear rank-width.
\end{boxedminipage}

\bigskip
\noindent
An advantage of this strategy is that it will lead to a stronger result that forms the start of a {\it systematic} study into boundedness of linear rank-width of $(H_1,H_2)$-free graphs.
This would address Open Problem~7.5 in~\cite{DJP19}, which asks for such a result. 
We refer to Section~\ref{sec:conclusion} for a further discussion on this. 

To illustrate our general strategy for the case where $H=P_4$, we can do as follows. 
In Step~1, we observe that every $P_4$-pivot-minor graph is $(P_4, \text{\textnormal{dart}})$-free
(see \figurename~\ref{f-three} for an illustration of the dart).
In Step~2, we use a result of Brignall, Korpelainen, and Vatter~\cite{BKV17}, who showed that
a class of $(P_4,H)$-free graphs has bounded linear rank-width if and only if $H$ is a threshold graph.
Hence, as the dart is a threshold graph,
the class of $(P_4, \text{\textnormal{dart}})$-free graphs, and thus the class of 
$P_4$-pivot-minor-free graphs, has bounded linear rank-width.

Whether $P_n$-pivot-minor-free graphs have bounded linear rank-width for $n\geq 5$ remains a challenging open question. In the remainder, we focus on the other tree $T$ on four vertices besides the $P_4$, which is the {\em claw} $K_{1,3}$ (the $4$-vertex star). 
We will prove the following result.

\begin{theorem}\label{t-claw}
Every claw-pivot-minor-free graph has linear rank-width at most~$59$.
\end{theorem}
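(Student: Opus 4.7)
The plan follows the general two-step strategy set out just above. First, I would identify forbidden induced subgraphs of every claw-pivot-minor-free graph. Any $K_{1,3}$-pivot-minor-free graph $G$ is immediately $K_{1,3}$-free (since induced subgraphs are pivot-minors) and also $S_{1,2,2}$-free, because $S_{1,2,2}$ contains an induced $K_{1,3}$, which would itself be a pivot-minor of $G$. To get a stronger forbidden list, I would look for further forbidden subgraphs by direct pivoting arguments. For instance, a short computation shows that pivoting an edge of the \emph{net} (the triangle with a pendant attached to each of its vertices) produces an induced $K_{1,3}$, so the net is pivot-minor-forbidden; iterated pivots combined with vertex deletions should rule out further small triangle-containing configurations (such as a triangle carrying two pendants on adjacent vertices). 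The goal is to assemble a list of forbidden subgraphs rich enough that every triangle of $G$ must sit inside a very restricted local structure.

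For the second step, I would split into cases according to whether $G$ contains a triangle. If $G$ is triangle-free, then $G$ is in particular $(K_3,S_{1,2,2})$-free, and the key structural result of this paper---that the class of $(K_3,S_{1,2,2})$-free graphs has bounded linear rank-width---immediately supplies the bound. If $G$ contains a triangle, then the additional forbidden induced subgraphs collected in Step~1 should constrain the neighborhood of every triangle tightly enough that one can either (i) reduce to the $(K_3,S_{1,2,2})$-free case by a split-decomposition-style argument, using that in a claw-free graph every open neighborhood is the complement of a triangle-free graph, or (ii) bound the cut-ranks directly along an explicit linear ordering of $V(G)$ obtained from the triangle structure.

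The main obstacle is the first step: pivot-minors are considerably less transparent than induced subgraphs, and pinning down precisely which triangle-containing configurations carry $K_{1,3}$ as a pivot-minor requires careful case analysis over short sequences of pivots together with vertex deletions. A secondary obstacle is to obtain the explicit constant $59$ rather than merely an unspecified finite bound; this requires tracking the contributions from the base bound for $(K_3,S_{1,2,2})$-free graphs and from the structural reduction used in the triangle case, and then combining them into a single explicit estimate.
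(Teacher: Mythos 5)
There is a genuine gap, and it is fatal to the plan as stated. Your Step~1 list of forbidden induced subgraphs, $\{K_{1,3},S_{1,2,2}\}$, is far too weak: every $3P_1$-free graph (i.e., every complement of a triangle-free graph) is automatically both claw-free and $S_{1,2,2}$-free, since both of those graphs contain $3P_1$ as an induced subgraph. Because triangle-free graphs have unbounded linear rank-width and complementation changes linear rank-width by at most one (Lemma~\ref{lem:complement}), the class of $(K_{1,3},S_{1,2,2})$-free graphs has \emph{unbounded} linear rank-width. So your triangle-free case is the easy and essentially empty one, while the ``contains a triangle'' case swallows a class of unbounded width; no local analysis of triangle neighbourhoods starting from that forbidden list can close it. Your heuristic search for further forbidden configurations (the net, triangles with pendants) is also aimed in the wrong direction: the graphs in question are typically very dense and full of triangles.

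The missing ingredient is the characterization from the authors' earlier work (Theorem~\ref{t-known}): a connected graph is claw-pivot-minor-free if and only if it is $(3P_1,W_4,\bw_3)$-free, so in particular it is $(3P_1,W_4)$-free. The decisive consequence is that one should pass to the \emph{complement}: since $\overline{3P_1}=K_3$ and $\overline{W_4}=P_1+2P_2$ is an induced subgraph of $S_{1,2,2}$, the complement of a connected claw-pivot-minor-free graph is $(K_3,S_{1,2,2})$-free. Theorem~\ref{t-maintwosubgraphs} then gives linear rank-width at most~$58$ for the complement, and Lemma~\ref{lem:complement} adds one to yield~$59$. You correctly identified that the $(K_3,S_{1,2,2})$-free bound is the engine of the proof, but you apply it to $G$ itself rather than to $\overline{G}$, and without the $3P_1$-freeness (independence number at most two) supplied by Theorem~\ref{t-known} there is no route from your setup to the conclusion, let alone to the explicit constant~$59$.
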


\noindent
As a consequence, we have verified (the original formulation of) Conjecture~\ref{q-1} for every caterpillar~$T$ on at most four vertices. Since every tree on at most four vertices is a caterpillar we have in fact shown that if $T$ is a tree on at most four vertices, then the class of $T$-pivot-minor-free graphs has bounded linear rank-width.

We first explain how we perform Step~1. 
In our previous paper~\cite{DDJKKOP18}, 
we proved that a graph is claw-pivot-minor-free if and only if it is $(\mbox{bull},\mbox{claw}, P_5, W_4, \bw_3)$-free; see  \figurename~\ref{f-three} for pictures of these forbidden induced graphs.
It is readily seen that to prove boundedness of linear rank-width of some graph class~${\cal G}$ one may restrict to connected graphs in ${\cal G}$.
In~\cite{DDJKKOP18} we showed that a graph~$G$ is $(\text{\textnormal{bull}},\text{\textnormal{claw}},P_5)$-free if and only if every component of~$G$ is $3P_1$-free (the graph $3P_1$ consists of three isolated vertices).
Hence, we derive the following result, in which we specify the graphs $H_1$ and $H_2$ of Step~1 as
$H_1=3P_1$ and $H_2=W_4$.

\begin{figure}
\begin{center}
\tikzstyle{v}=[circle,draw,fill=black!50,inner sep=0pt,minimum width=4pt]
\begin{tikzpicture}
\draw (0.5,1) node [v] (v){};
\draw (0,0) node [v] (v1){};
\draw (0.5,0) node [v] (v2){};
\draw (1,0) node [v] (v3){};
\draw (v1)--(v)--(v2) (v)--(v3);
\draw (0.5,0) node[label=below:claw]{};
\end{tikzpicture}
$\quad\quad$
\begin{tikzpicture}
\draw (0,0) node [v] (v1){};
\draw (1,0) node [v] (v2){};
\draw (1,1) node [v] (v3){};
\draw (0,1) node [v] (v4){};
\draw (.5,.5) node[v] (v) {};
\draw (v4)--(v1)--(v)--(v2)--(v3);
\draw (0.5,0) node[label=below:$P_5$]{};
\end{tikzpicture}
$\quad\quad$
\begin{tikzpicture}
\draw (0,0) node [v] (v1){};
\draw (1,0) node [v] (v2){};
\draw (1,1) node [v] (v3){};
\draw (0,1) node [v] (v4){};
\draw (.5,.5) node[v] (v) {};
\draw (v4)--(v1)--(v)--(v2)--(v3);
\draw (v1)--(v2);
\draw (0.5,0) node[label=below:bull]{};
\end{tikzpicture}
$\quad\quad$
\begin{tikzpicture}
\draw (0,0) node [v] (v1){};
\draw (1,0) node [v] (v2){};
\draw (1,1) node [v] (v3){};
\draw (0,1) node [v] (v4){};
\draw (.5,.5) node[v] (v) {};
\foreach \i in {1,2,3,4} {
\draw (v)--(v\i);
}
\draw (v1)--(v2)--(v3)--(v4)--(v1);
\draw (0.5,0) node[label=below:$W_4$]{};
\end{tikzpicture}
$\quad\quad$
\begin{tikzpicture}
\draw (0,0.25) node [v] (v1){};
\draw (1,0.25) node [v] (v2){};
\draw (0.5,0.5) node [v] (v3){};
\draw (0.5,0) node [v] (v4){};
\draw (0.5,1) node[v] (v) {};
\draw (v1)--(v4)--(v2)--(v3)--(v1);
\draw (v)--(v3)--(v4);
\draw (0.5,0) node[label=below:dart]{};
\end{tikzpicture}
$\quad\quad$
\begin{tikzpicture}
\begin{scope}[xshift=-1.5cm]
\foreach \i in {1,2} {
\draw (120*\i:.7) node[v](a\i){};
}
\foreach \i in {3} {
\draw (-10:.7) node[v](a\i){};
}
\draw (a1)--(a2)--(a3)--(a1);
\end{scope}
\begin{scope}[xshift=1.5cm]
\draw (10:1) node[v] (b4){};
\foreach \i in {1,2} {
\draw (180-120*\i:.7) node[v](b\i){};
\draw (a\i)--(b\i)--(b4);
}
\foreach \i in {3} {
\draw (190:.7) node[v](b\i){};
\draw (a\i)--(b\i)--(b4);
}
\draw (b1)--(b2)--(b3)--(b1);
\end{scope}
\draw (-90:.4) node[label=below:$\bw_3$]{};
\end{tikzpicture}
\end{center}
\caption{The graphs claw, $P_5$, bull, $W_4$, dart, and $\bw_3$.}\label{f-three}
\end{figure}
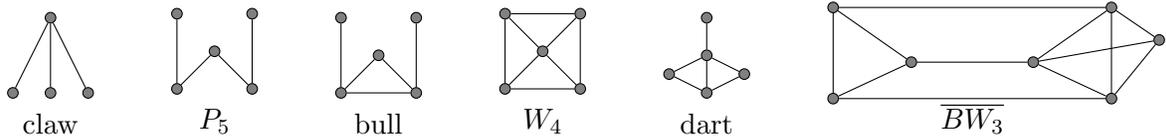

\begin{theorem}[Dabrowski et al.~\cite{DDJKKOP18}]\label{t-known}
Let $G$ be a connected graph. Then $G$ is claw-pivot-minor-free if and only if $G$ is $(3P_1,W_4,\bw_3)$-free. In particular,
the class of connected claw-pivot-minor-free graphs belongs to the class of $(3P_1,W_4)$-free graphs.
\end{theorem}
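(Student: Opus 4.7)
The plan is to derive this result by combining two characterizations from our earlier paper~\cite{DDJKKOP18} that are already recalled in the text above. First, I would invoke the characterization that a graph is claw-pivot-minor-free if and only if it is $(\text{bull}, \text{claw}, P_5, W_4, \bw_3)$-free. This reduces the task to showing that, for a connected graph, $(\text{bull}, \text{claw}, P_5)$-freeness can be replaced by $3P_1$-freeness in the list of forbidden induced subgraphs.

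For that replacement I would apply the second cited fact from~\cite{DDJKKOP18}: a graph $G$ is $(\text{bull}, \text{claw}, P_5)$-free if and only if every connected component of $G$ is $3P_1$-free. When $G$ is connected, this collapses to $G$ itself being $3P_1$-free. Substituting this equivalence into the five-forbidden-graph characterization then yields that a connected graph $G$ is claw-pivot-minor-free if and only if it is $(3P_1, W_4, \bw_3)$-free. The ``In particular'' clause is immediate, since any $(3P_1, W_4, \bw_3)$-free graph is \emph{a fortiori} $(3P_1, W_4)$-free, so the class of connected claw-pivot-minor-free graphs is contained in the class of $(3P_1, W_4)$-free graphs.

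There is essentially no technical obstacle here, as both nontrivial ingredients have already been established in~\cite{DDJKKOP18}; the theorem is simply a repackaging of these facts in the exact form needed for Step~1 of the general $(H_1, H_2)$-free strategy, with $H_1 = 3P_1$ and $H_2 = W_4$. The only point worth a moment's care is that the collapse from ``every component is $3P_1$-free'' to ``$G$ is $3P_1$-free'' requires $G$ to be connected, which is exactly the hypothesis of the theorem; this is why the statement is restricted to connected graphs rather than phrased for arbitrary graphs.
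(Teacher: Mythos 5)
Your derivation is correct and is essentially identical to how the paper obtains this theorem: it combines the characterization of claw-pivot-minor-free graphs as $(\text{bull},\text{claw},P_5,W_4,\bw_3)$-free with the fact that $(\text{bull},\text{claw},P_5)$-freeness is equivalent to every component being $3P_1$-free, using connectedness to collapse the latter. Nothing further is needed.
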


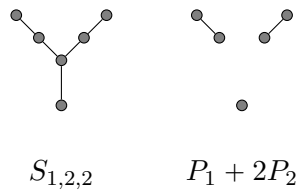
\begin{figure}[b]
   \tikzstyle{v}=[circle,draw,fill=black!50,inner sep=0pt,minimum width=4pt]
  \centering
   \begin{tikzpicture}[scale=0.6]
        \node [v]  (b1) at (1-4, 0-10){};
        \node [v]  (b3) at (2-4, 2-10){};
        \node [v]  (b4) at (0-4, 2-10){};
        \node [v]  (b5) at (1-4, 1-10){};
        \node [v]  (b6) at (0.5-4, 1.5-10){};
        \node [v]  (b7) at (1.5-4, 1.5-10){};
        
        \draw(b5)--(b6)--(b4);
        \draw(b5)--(b7)--(b3);
        \draw(b5)--(b1);
         \draw (1-4,-1-10) node [below] {$S_{1,2,2}$}; 
  
        \node [v]  (d1) at (1, 0-10){};
        \node [v]  (d3) at (2, 2-10){};
        \node [v]  (d4) at (0, 2-10){};
        \node [v]  (d6) at (0.5, 1.5-10){};
        \node [v]  (d7) at (1.5, 1.5-10){};
        
        \draw(d6)--(d4);
        \draw(d7)--(d3);
           \draw (1,-1-10) node [below] {$P_1+2P_2$}; 
\end{tikzpicture}
   \caption{The graphs $S_{1,2,2}$ and $P_1+2P_2$; note that $P_1+2P_2$ is an induced subgraph of $S_{1,2,2}$.}
  \label{fig:specialgraph}
\end{figure}

\noindent
As Step~2, we must prove that $(3P_1,W_4)$-free graphs have bounded linear rank-width.
With an eye on a future classification of boundedness of linear rank-width for $(H_1,H_2)$-free graphs, we aim to prove boundedness for classes of $(H_1,H_2)$-free graphs that are as large as possible. 
For integers $1\leq i\leq j\leq k$, let~$S_{i,j,k}$ denote the {\it subdivided claw}, which is the graph obtained from the claw by subdividing its three edges
$i-1$ times, $j-1$ times and $k-1$ times, respectively; see also \figurename~\ref{fig:specialgraph} and note that $S_{1,1,1}=K_{1,3}$. 
The {\it complement} of a graph~$G$ is the graph $\overline{G}$ with $V(\overline{G})=V(G)$ and $E(\overline{G})=\{uv\; |\; u,v\in V(G)\; \mbox{with}\; u\neq v \; \mbox{and}\; uv\notin E(G)\}$.
The graph $K_3$ denotes the triangle.
As $3P_1=\overline{K_3}$ and $W_4=\overline{P_1+2P_2}$ is an induced subgraph of 
the complement of $\overline{S_{1,2,2}}$, the class of $(3P_1,W_4)$-free graphs is contained in the class of $(\overline{K_3},\overline{S_{1,2,2}})$-free graphs.
We prove the following result in Section~\ref{claw}:

\begin{theorem}\label{t-maintwosubgraphs}
Every $(K_3, S_{1,2,2})$-free graph has linear rank-width at most~$58$.
\end{theorem}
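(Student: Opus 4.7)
The plan is to construct, for an arbitrary connected $(K_3, S_{1,2,2})$-free graph $G$, an explicit linear vertex ordering in which every prefix $A$ satisfies $\cutrk_G(A) \le 58$. The ordering will be built from a BFS layering: fix a root $v_0 \in V(G)$ and set $V_i = \{u \in V(G) : d_G(u, v_0) = i\}$ for $i \ge 0$. Triangle-freeness immediately forces $V_1$ to be independent and, more generally, guarantees that every edge of $G$ either lies inside a single layer $V_i$ or runs between two consecutive layers $V_i$ and $V_{i+1}$.

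The heart of the argument is a pair of structural lemmas controlling the adjacency inside this BFS decomposition. The first asserts that for every $i$ the bipartite graph between $V_i$ and $V_{i+1}$ has cut-rank bounded by an absolute constant: if many vertices of $V_{i+1}$ realized pairwise linearly independent neighborhood vectors into $V_i$, then, taking three of them and following BFS ancestors back toward $v_0$, one would exhibit an induced copy of $S_{1,2,2}$ whose center is a common ancestor and whose three branches realize the three distinct neighborhood patterns. Triangle-freeness supplies precisely the non-edges needed to certify that the copy is induced. A second lemma, proved by a variant of the same argument, bounds the cut-rank contribution coming from the intra-layer edges inside $V_i$ (which themselves form a bipartite subgraph of $G$, again by triangle-freeness).

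Given these two lemmas, I would order the vertices layer by layer as $V_0, V_1, \ldots, V_r$, and refine the internal order of each $V_i$ by twin-classes with respect to neighborhoods in $V_{i-1} \cup V_{i+1}$, so that slicing through a single layer only separates a bounded number of classes. A prefix $A$ of this layout fully contains some initial layers, entirely misses some final ones, and splits at most one intermediate layer $V_i$; its cut-rank then decomposes into contributions from the three adjacent layer windows $V_{i-1}, V_i, V_{i+1}$, each bounded by one of the structural lemmas. The explicit constant $58$ then arises as an arithmetic optimization summing these worst-case contributions.

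The main obstacle will be the case analysis behind the two structural lemmas: one must enumerate where each vertex of a putative induced $S_{1,2,2}$ can sit within the BFS layering, and then use triangle-freeness to enforce the non-edges that make the copy induced. Pinning down the sharp numerical bound $58$, rather than a larger constant, is a further optimization done layer window by layer window; the hardest piece is handling vertices of $V_i$ whose intra-layer and cross-layer adjacencies interact in subtle ways, since there a careless bound would compound across the three windows and destroy the target constant.
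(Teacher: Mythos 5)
Your plan hinges on the first ``structural lemma'': that in a connected $(K_3,S_{1,2,2})$-free graph the bipartite graph between consecutive BFS layers $V_i$ and $V_{i+1}$ has bounded cut-rank (equivalently, that only boundedly many linearly independent neighbourhood vectors, or twin-classes, occur across the layer boundary). This is false, and the counterexample also kills the overall ``layer by layer'' architecture. Take $G_n$ with vertex set $\{v_0\}\cup\{a_1,\dots,a_n\}\cup\{b_1,\dots,b_n\}$, where $v_0$ is adjacent to every $a_i$ and $a_ib_j$ is an edge if and only if $j\le i$ (a chain graph with an apex). This graph is triangle-free ($G_n-v_0$ is bipartite and $N(v_0)$ is independent) and $S_{1,2,2}$-free: $G_n-v_0$ is a $2P_2$-free bipartite graph and hence contains no induced $S_{1,2,2}$, and a short case analysis on which role $v_0$ could play in an induced $S_{1,2,2}$ (centre, pendant leaf, middle or end of a long leg) always produces either two indices forced to satisfy $l\le i<m\le j<l$ or a leg vertex in $\{a_1,\dots,a_n\}$ that must be non-adjacent to $v_0$. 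Yet the BFS layers from $v_0$ are $V_1=\{a_1,\dots,a_n\}$ and $V_2=\{b_1,\dots,b_n\}$, the $a_i$ have $n$ pairwise distinct (indeed linearly independent) neighbourhoods in $V_2$, and $\cutrk_{G_n}(\{v_0\}\cup V_1)=n$. So the prefix consisting of the first two layers already has unbounded cut-rank, no refinement into boundedly many twin-classes exists, and the heuristic that ``three independent neighbourhood patterns yield an induced $S_{1,2,2}$'' fails precisely because nested neighbourhoods give independence without supplying the non-edges you need (chain graphs are the canonical obstruction here). Any correct ordering of a chain graph must interleave the two sides, which is incompatible with placing all of $V_i$ before all of $V_{i+1}$.

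For comparison, the paper does not use a global BFS at all. It splits into three cases: bipartite graphs (handled via Lozin's structure theorems for $S_{1,2,2}$-free bipartite graphs, reducing to chain graphs with the interleaved ordering and a twin-contraction lemma, giving width at most $3$); non-bipartite $C_5$-free graphs (which are induced cycles blown up by false twins, width at most $3$); and graphs containing an induced $C_5$, where the neighbourhood of the $C_5$ is partitioned into sets $V_i,W_i$, a series of claims pins down which pairs are complete, anti-complete or $2P_2$-free, and $25$ bipartite complementations reduce the graph to a disjoint union of ``nice'' $3$-partite graphs of linear rank-width at most $3$; the constant $58=3+2\cdot 25+5$ comes from Lemma~\ref{lem:bipartitecomplement} and re-adding the five cycle vertices. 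If you want to salvage your approach, you would need to replace the layer-by-layer ordering by one that interleaves vertices across a cut whenever the cross-cut bipartite graph is a chain graph, which is essentially what the paper's case analysis accomplishes by other means.
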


\noindent
As observed in Lemma~\ref{lem:complement} in Section~\ref{s-pre}, complementing a graph may increase its linear rank-width by at most~$1$. Hence, Theorems~\ref{t-known} and~\ref{t-maintwosubgraphs} and this observation imply Theorem~\ref{t-claw}.

Dabrowski et al.~\cite{DabrowskiDP2016} proved that the class of $(K_3, S_{1,2,2})$-free graphs has bounded rank-width. As every class of bounded linear rank-width has bounded rank-width, but the reverse is not necessarily true, Theorem~\ref{t-maintwosubgraphs} is a strengthening of their result.
Moreover, Theorem~\ref{t-maintwosubgraphs} is tight in the sense that even
the class of $S_{1,2,3}$-free bipartite graphs is known to have unbounded linear rank-width~\cite{ALZ18}.

It remains to prove Theorems~\ref{t-main1},~\ref{thm:pmdh} and~\ref{t-maintwosubgraphs}, which we do in Sections~\ref{sec:caterpillar},~\ref{pndh} and~\ref{claw}, respectively.
We discuss future research in Section~\ref{sec:conclusion}.

\section{Preliminaries}\label{s-pre}

	In this paper, all graphs have no loops and no multiple edges.
	For a graph~$G$, let~$V(G)$ and~$E(G)$ denote the vertex set and edge set of~$G$, respectively. 
	For $S\subseteq V(G)$, let~$G[S]=(S,\{uv\; |\; uv\in E, u,v\in S\})$ denote the subgraph of~$G$ induced by~$S$. 
	For convenience, we write $G[v_1, v_2, \ldots, v_m]$ for $G[\{v_1, v_2, \ldots, v_m\}]$.
	A graph~$H$ is an \emph{induced subgraph} of $G$ if $H=G[S]$ for some $S\subseteq V(G)$.
	For a vertex $v\in V(G)$, we let $G- v$ be the graph obtained from~$G$ by removing~$v$. 
	For a set $S\subseteq V(G)$, we let $G-S$ be the graph obtained from~$G$ by removing all vertices in~$S$. 
	For an edge $e\in E(G)$, we let $G-e$ be the graph obtained from~$G$ by removing~$e$. For  
	a set $F\subseteq E(G)$, we let $G-F$ be the graph obtained from~$G$ by removing all edges in $F$.

The set of neighbours of a vertex~$v$ in a graph~$G$ is denoted by~$N_G(v)$. 
The size of~$N_G(v)$ is the \emph{degree} of~$v$.
	For a set $A\subseteq V(G)$, we let~$N_G(A)$ denote the set of all vertices in $V(G)\setminus A$ that have a neighbour in~$A$.
	Two vertices~$v$ and~$w$ in~$G$ are \emph{twins} if $N_G(v)\setminus \{w\}=N_G(w)\setminus \{v\}$.
	We say that twins~$v$ and~$w$ are \emph{false twins} if~$v$ is not adjacent to~$w$.
An edge~$e$ of a connected graph~$G$ is a \emph{cut edge} if $G-e$ is disconnected.
	
Let $A$ and $B$ be two disjoint vertex subsets of a graph $G$.
We let $G\times (A,B)$ be the graph  obtained from $G$ by taking a \emph{bipartite complementation} between $A$ and $B$, that is, by replacing each edge between a vertex of $A$ and a vertex of $B$ by a non-edge, and vice versa.
We say that~$A$ is \emph{complete} to~$B$ if $a$ is adjacent to~$b$ for every $a\in A$ and every $b\in B$, whereas~$A$ is \emph{anti-complete} to~$B$ if $a$ is not adjacent to~$b$ for every $a\in A$ and every $b\in B$.
If~$A$ is complete or anti-complete to~$B$, then $A$ is \emph{trivial} to~$B$.
If $A$ consists of one vertex~$v$, then we may say that~$v$ is complete or anti-complete to~$B$.

A set~$F$ of edges is a \emph{matching} in a graph $G$ if no two edges in~$F$ have a common end-vertex.  A set $S$ of vertices in a graph $G$ is an \emph{independent set} if no two vertices in $S$ are adjacent, 
whereas $S$ is a \emph{clique} if every pair of vertices in $S$ is adjacent.	
The complete graph $K_n$ is the graph on $n$ vertices that form a clique.  
The complete bipartite graph~$K_{n,m}$ is the bipartite graph
with a bipartition $(A,B)$ such that $\abs{A}=n$, $\abs{B}=m$, and~$A$ is complete to~$B$.  The graph~$W_n$ is the graph on $n+1$ vertices that is obtained from a cycle on $n$ vertices by
adding one vertex that is made adjacent to all vertices in the cycle.
The \emph{length} of a path is the number of edges in the path.

For two graphs~$G$ and~$H$ on disjoint vertex sets, we let $G+\nobreak H$ be the disjoint union of~$G$ and~$H$, which has $V(G+H)=V(G)\cup V(H)$ and $E(G+H)=E(G) \cup E(H)$.
We let $pG$ denote the disjoint union of~$p$ copies of~$G$. 
The {\it subdivision} of an edge $uv$ in a graph removes the edge $uv$ and introduces a new vertex $w$ that is made adjacent (only) to $u$ and $v$.

A graph~$H$ is the \emph{$1$-subdivision} of a graph~$G$ if~$H$ is obtained from~$G$ by subdividing each edge of $G$ exactly once.
 A graph~$G$ is \emph{distance-hereditary} if for every connected induced subgraph~$H$ of~$G$ and every two vertices $v,w$ in~$H$, the distance between~$v$ and~$w$ in~$H$ is the same as the distance in~$G$. 
 
Let~$G$ be a graph with vertices $x_1,\ldots,x_n$.
Let~$A=A_G$ denote the \emph{adjacency matrix} of~$G$, that is, entry $A_G(i,j)=1$ if $x_i$ is adjacent to $x_j$ and $A_G(i,j)=0$ if $x_i$ is not adjacent to $x_j$.
For a subset $X\subseteq V(G)$, the matrix $A[X,V(G)\setminus X]$ is the $|X|\times |V(G)\setminus X|$ submatrix of $A$ restricted to the rows of $X$ and the columns of $V(G)\setminus X$. 
The \emph{cut-rank function} of~$G$ is the function $\cutrk_G:2^{V(G)}\rightarrow \mathbb{N}$ such that for each $X\subseteq V(G)$,  \[\cutrk_G(X):=\rank(A_G[X,V(G)\setminus X]),\] where we compute the rank {\it over the binary field}.
A {\it linear ordering} of $G$ is a permutation of the vertices of $G$.
The \emph{width} of a linear ordering $(x_1,\ldots, x_n)$ of~$G$ is defined as $\max_{1\le i\le n}\{\cutrk_G(\{x_1,\ldots,x_i\})\}$.
The \emph{linear rank-width} $\lrw(G)$ of $G$ is the minimum width over all linear orderings of~$G$.

Let $X$ and $Y$ be two disjoint subsets of vertices of a graph.
For an ordering $(x_1, \ldots, x_n)$ of the vertices of $X$ and an ordering $(y_1, \ldots, y_m)$ of the vertices of $Y$, we define the ordering
	\[(x_1, \ldots, x_n)\oplus (y_1, \ldots, y_m):=(x_1, \ldots, x_n, y_1, \ldots, y_m).\]

The cut-rank function is invariant under taking local complementation. This implies that the linear rank-width of a graph does not increase when taking its vertex-minor.

\begin{lemma}[Bouchet~\cite{Bouchet1989}; See Oum~\cite{Ou05}]\label{lem:bouchet}
If $G$ is obtained from $H$ by a sequence of local complementations, then 
$\cutrk_G(X)=\cutrk_H(X)$ for all $X\subseteq V(G)$. 
So, if $G$ is a vertex-minor of $H$, then $\lrw(G)\le \lrw(H)$.
\end{lemma}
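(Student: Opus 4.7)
The plan is to handle the two statements separately: deal with local complementation by a direct matrix computation, and then reduce vertex-minors to induced subgraphs of locally-equivalent graphs through a short commutation argument.

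For the first claim, I would reduce to the case of a single local complementation at a vertex $v$ and split according to whether $v\in X$ or $v\in V\setminus X$. In the case $v\in X$, the key observation is that for $u\in X\setminus\{v\}$ and $w\in V\setminus X$, the entry $A_G(u,w)$ differs from $A_H(u,w)$ exactly when both $u\in N_H(v)$ and $w\in N_H(v)$. Since $A_H(v,w)=1$ iff $w\in N_H(v)$, this means that for each $u\in N_H(v)\cap X$ the row of $u$ in $A_G[X,V\setminus X]$ equals the row of $u$ in $A_H[X,V\setminus X]$ plus (over $\mathbb{F}_2$) the row of $v$, while all other rows of the cut-matrix are unchanged. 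Thus $A_G[X,V\setminus X]$ is obtained from $A_H[X,V\setminus X]$ by a sequence of elementary row operations, which preserves rank. The case $v\in V\setminus X$ is symmetric and yields column operations. An induction on the length of the sequence of local complementations then gives $\cutrk_G(X)=\cutrk_H(X)$.

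For the second claim, I would first verify the commutation identity $(H*u)-v=(H-v)*u$ for $u\neq v$, which is immediate from the definition since local complementation at $u$ toggles only edges among neighbours of $u$, so edges incident to $v$ become irrelevant after removing $v$. Iterating this identity, every vertex-minor $G$ of $H$ can be written as $G=H'[S]$ where $S=V(G)$ and $H'$ is obtained from $H$ purely by local complementations. Now fix an optimal linear ordering $\sigma=(h_1,\ldots,h_n)$ of $V(H)$ of width $\lrw(H)$ and let $\sigma'=(g_1,\ldots,g_m)$ be its restriction to $S$. Setting $X_i=\{g_1,\ldots,g_i\}$ and letting $Y_i=\{h_1,\ldots,h_{j_i}\}$ be the prefix of $\sigma$ ending at $g_i=h_{j_i}$, one has $X_i=S\cap Y_i$ and $S\setminus X_i\subseteq V(H)\setminus Y_i$. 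Hence $A_G[X_i,S\setminus X_i]=A_{H'}[X_i,S\setminus X_i]$ is a submatrix of $A_{H'}[Y_i,V(H)\setminus Y_i]$, and by the first part $\cutrk_G(X_i)\le\cutrk_{H'}(Y_i)=\cutrk_H(Y_i)\le\lrw(H)$. Thus $\sigma'$ witnesses $\lrw(G)\le\lrw(H)$.

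The only conceptually subtle point is recognising that local complementation acts on the cut-matrix as a coherent block of elementary row (or column) operations; once this identification is in place, both parts of the lemma reduce to routine bookkeeping and an elementary induction, and the restriction of an optimal linear ordering to $V(G)$ provides the required witness for the linear rank-width inequality.
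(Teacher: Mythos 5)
Your proof is correct. Note that the paper does not prove this lemma at all: it is imported as a known result of Bouchet and Oum, so there is no in-paper argument to compare against. What you have written is essentially the standard proof from the literature: the observation that a local complementation at $v$ acts on the cut-matrix $A[X,V\setminus X]$ as adding the row (or column) of $v$ to the rows (or columns) indexed by $N(v)\cap X$ (or $N(v)\setminus X$) over $\mathbb{F}_2$ is exactly Bouchet's rank-invariance argument, and the second half --- commuting deletions past local complementations via $(H*u)-v=(H-v)*u$ and then restricting an optimal linear ordering of $H$ to $V(G)$, using that each induced cut-matrix of $G$ is a submatrix of the corresponding cut-matrix of the locally equivalent graph $H'$ --- is the standard derivation of monotonicity of linear rank-width under vertex-minors. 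All steps check out, including the only delicate point (that $X_i=S\cap Y_i$ and $S\setminus X_i\subseteq V(H)\setminus Y_i$, so the submatrix claim is legitimate).
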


We need three structural lemmas on linear rank-width.
Recall that the complement of a graph~$G$ is the graph $\overline{G}$ with $V(\overline{G})=V(G)$ and $E(\overline{G})=\{uv\; |\; u,v\in V(G)\; \mbox{with}\; u\neq v \; \mbox{and}\; uv\notin E(G)\}$.

\begin{lemma}\label{lem:complement}
	If~$G$ has linear rank-width~$k$, then~$\overline{G}$ has linear rank-width at most~$k+\nobreak 1$.
	\end{lemma}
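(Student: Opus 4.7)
The plan is to keep the same linear ordering for $\overline{G}$ as for $G$ and compare the cut-ranks vertex-cut by vertex-cut. Specifically, let $(x_1,\ldots,x_n)$ be a linear ordering of $V(G)$ witnessing $\lrw(G)=k$, and set $X_i=\{x_1,\ldots,x_i\}$ for each $i$. I would argue that $\cutrk_{\overline{G}}(X_i)\le \cutrk_G(X_i)+1$ for every $i$, from which the lemma follows immediately.

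The key observation is a matrix identity over the binary field. Since $X_i$ and $V(G)\setminus X_i$ are disjoint, no diagonal entry of the adjacency matrix appears in the cut submatrix, so
\[
A_{\overline{G}}[X_i,V\setminus X_i] \;=\; A_G[X_i,V\setminus X_i] + J,
\]
where $J$ is the all-ones matrix of the appropriate dimensions. Since $\rank(J)\le 1$ over any field, subadditivity of rank gives
\[
\cutrk_{\overline{G}}(X_i) \;\le\; \cutrk_G(X_i) + \rank(J) \;\le\; \cutrk_G(X_i) + 1.
\]

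Taking the maximum over $i$, the linear ordering $(x_1,\ldots,x_n)$ has width at most $k+1$ in $\overline{G}$, so $\lrw(\overline{G})\le k+1$. There is no real obstacle here; the only subtle point is noting that the excluded diagonal of $A_{\overline{G}} = J - I - A_G$ does not affect the cut submatrix because rows and columns are indexed by disjoint sets, so the correction term is exactly $J$ rather than $J - I$.
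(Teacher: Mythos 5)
Your proof is correct. The identity $A_{\overline{G}}[X_i,V\setminus X_i]=A_G[X_i,V\setminus X_i]+J$ over $\mathrm{GF}(2)$ is valid precisely because the row and column index sets are disjoint (so the diagonal correction never enters), and subadditivity of rank with $\rank(J)\le 1$ then bounds each cut-rank, hence the width of the same ordering, by $k+1$. This is, however, a genuinely different route from the paper's. The paper adds a new vertex $a$ complete to $V(G)$ to form $H$, observes that $\overline{G}=(H*a)[V(G)]$, and concludes via Lemma~\ref{lem:bouchet} (invariance of cut-rank under local complementation) together with the fact that adding one vertex raises linear rank-width by at most one. Your argument is more elementary and self-contained: it needs no vertex-minor machinery, and it proves the slightly stronger statement that one fixed optimal ordering for $G$ simultaneously witnesses width at most $k+1$ for $\overline{G}$, with every individual cut-rank perturbed by at most $1$. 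The paper's approach, on the other hand, reuses the local-complementation toolkit that is already central to the paper and is the same template used immediately afterwards for Lemma~\ref{lem:bipartitecomplement} (bipartite complementation via a pivot on two added vertices); your rank-perturbation argument would also adapt to that setting, since a bipartite complementation changes each cut matrix by a sum of two rank-one matrices.
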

	
	\begin{proof}
	Let~$H$ be the graph obtained from~$G$ by adding a vertex $a$ complete to $V(G)$.
	Observe that $\overline{G}=(H*a)[V(G)]$. Then, $\lrw(\overline{G})\leq \lrw(H*a)=\lrw(H)\le \lrw(G)+1$.
	Here, the second step follows from Lemma~\ref{lem:bouchet}, and the third step follows from the fact that adding one vertex to a graph may increase the linear rank-width by at most 
	one.	\end{proof}
		
\begin{lemma}\label{lem:bipartitecomplement}
	Let~$G$ be a graph and~$A$ and~$B$ be two disjoint vertex subsets of~$G$.
	If~$G$ has linear rank-width~$k$, then $G\times (A,B)$ has linear rank-width at most~$k+\nobreak 2$.
	\end{lemma}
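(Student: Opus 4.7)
The plan is to mimic the proof of Lemma~\ref{lem:complement}, but replacing the single vertex addition followed by a local complementation with a two-vertex addition followed by an edge pivot. Starting from $G$, I would build an auxiliary graph $H$ by introducing two new vertices $a$ and $b$: make $a$ adjacent to every vertex of $A$ and to $b$, and make $b$ adjacent to every vertex of $B$ and to $a$. Since adding a single vertex to a graph raises the linear rank-width by at most one (place the new vertex at the end of an optimal ordering, as used in the proof of Lemma~\ref{lem:complement}), one gets $\lrw(H) \le \lrw(G) + 2 = k + 2$.

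Next, I would show that $(H \pivot ab)[V(G)] = G \times (A,B)$. Using the alternative description of the pivot recalled in the introduction, the set $S_a$ of neighbours of $a$ not adjacent to $b$ equals $A$, the set $S_b$ equals $B$, and the set $S_{ab}$ of common neighbours is empty because $A \cap B = \emptyset$. Consequently, the pivot flips precisely the edges between $A$ and $B$, while the redistribution of edges incident to $a$ and $b$ becomes irrelevant after those two vertices are removed. Deleting $a$ and $b$ therefore yields exactly $G \times (A,B)$.

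To conclude, since $H \pivot ab = H * a * b * a$ is obtained from $H$ by three local complementations, Lemma~\ref{lem:bouchet} yields $\lrw(H \pivot ab) = \lrw(H) \le k + 2$. Because $G \times (A,B)$ is an induced subgraph, and in particular a vertex-minor, of $H \pivot ab$, a second application of Lemma~\ref{lem:bouchet} gives $\lrw(G \times (A,B)) \le k + 2$, as claimed. The only point that requires genuine care is the verification that $H \pivot ab$ really implements the bipartite complementation on $V(G)$: one must check that no edge inside $A$, inside $B$, or inside $V(G) \setminus (A \cup B)$ is disturbed, which follows directly from the three-set description of the pivot together with the fact that in $H$ the vertex $a$ has no neighbour in $V(G) \setminus A$ and $b$ has no neighbour in $V(G) \setminus B$.
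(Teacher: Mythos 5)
Your proof is correct and follows essentially the same route as the paper: both construct the auxiliary graph $H$ by adding two adjacent vertices $a$ and $b$ with $N_H(a)\cap V(G)=A$ and $N_H(b)\cap V(G)=B$, observe that $G\times(A,B)=(H\pivot ab)[V(G)]$, and combine the two-vertex bound with Lemma~\ref{lem:bouchet}. The only difference is that you spell out the verification that the pivot realises the bipartite complementation (via $S_a=A$, $S_b=B$, $S_{ab}=\emptyset$), which the paper leaves as an observation.
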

	\begin{proof}
	Let~$H$ be the graph obtained from~$G$ by adding two adjacent vertices~$a$ and~$b$ such that
	\begin{itemize}
	\item $N_H(a)\cap V(G)=A$ and $N_H(b)\cap V(G)=B$.
	\end{itemize}
	Observe that $G\times (A,B)=(H\pivot ab)[V(G)]$.
	Since adding two vertices may increase the linear rank-width by at most two, 
	we have $\lrw(H\pivot ab)=\lrw(H)\le \lrw(G)+2$.
	Therefore, we have $\lrw(G\times (A,B))\le \lrw(G)+2$.
	\end{proof}
	
Let~$I$ be a set of pairwise twins in a graph~$G$.
We define~$G//I$ as the graph obtained from~$G$ by removing all the vertices of~$I$ except one vertex.

\begin{lemma}\label{lem:twinreduced}
Let~$G$ be a graph and $I_1, I_2, \ldots, I_m$ be pairwise disjoint subsets of~$V(G)$ such that each~$I_i$ is a set of pairwise twins in~$G$.
Then $\lrw(G)\le \lrw(G//I_1 //I_2 // \cdots //I_m)+1$.
\end{lemma}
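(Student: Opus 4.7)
My plan is to lift an optimal linear ordering $\sigma'$ of $G':=G//I_1//\cdots//I_m$ to a linear ordering $\sigma$ of $G$ by replacing, for each $i$, the representative $v_i$ of $I_i$ in $\sigma'$ by the vertices of $I_i$ listed consecutively in an arbitrary internal order. I will then show that every prefix of $\sigma$ has cut-rank in $G$ at most $\lrw(G')+1$, which yields the stated bound.

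Two observations carry the argument. First, because each $I_i$ sits as a consecutive block in $\sigma$, any prefix $X$ of $\sigma$ splits at most one of the sets $I_1,\ldots,I_m$. Second, twins collapse the cut matrix $M:=A_G[X,V(G)\setminus X]$: any block $I_i$ that is not split by $X$ contributes identical rows to $M$ (if $I_i\subseteq X$) or identical columns (if $I_i\cap X=\emptyset$), since all vertices of $I_i$ share the same external neighbourhood. If instead $X$ splits $I_\ell$ into $I':=X\cap I_\ell$ and $I'':=I_\ell\setminus X$, then the rows of $M$ indexed by $I'$ are still identical: entries in columns outside $I''$ record the common external adjacencies of $I_\ell$, while entries in columns inside $I''$ are uniformly $0$ or $1$ according as $I_\ell$ consists of false or true twins. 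Symmetrically, the columns indexed by $I''$ are identical.

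The two cases then separate cleanly. If $X$ splits no $I_i$, collapsing duplicate rows and columns of $M$ produces exactly the cut matrix $A_{G'}[X',V(G')\setminus X']$ of the corresponding prefix $X'$ of $\sigma'$, so $\cutrk_G(X)=\cutrk_{G'}(X')\le \lrw(G')$. If $X$ splits a unique block $I_\ell$, set $X_0:=X\setminus I'$; this $X_0$ is itself a prefix of $\sigma$ splitting no $I_i$. Fully collapsing $M$ (i.e.\ keeping one representative row for each block contained in $X$ and one representative column for each block disjoint from $X$) produces a matrix $M'$ whose restriction to the rows of $X_0$ coincides with $A_{G'}[X_0',V(G')\setminus X_0']$, the representative column for $I''$ playing the role of the $v_\ell$-column. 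Since $M'$ has only one extra row, a representative of $I'$, it follows that
\[
\cutrk_G(X)=\rank(M)=\rank(M')\le \cutrk_{G'}(X_0')+1\le \lrw(G')+1.
\]

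I expect the only nonroutine step to be the bookkeeping in the split case: verifying that after collapsing the $I''$-columns of $M$ to a single representative $c$, the column $c$ really coincides, on rows outside $I'$, with the $v_\ell$-column of $A_{G'}[X_0',V(G')\setminus X_0']$, uniformly for the true-twin and the false-twin variants of $I_\ell$. Once entries are tracked carefully, this identification falls out of the definition of twins, and the proof concludes immediately.
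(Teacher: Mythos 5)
Your proposal is correct and follows essentially the same route as the paper's proof: lift an optimal ordering of $G//I_1//\cdots//I_m$ by expanding each representative into a consecutive block, observe that any prefix splits at most one $I_\ell$, and bound the cut-rank by the corresponding cut in the quotient plus one. Your write-up is in fact more detailed than the paper's at the one delicate point (why the $I'$-rows and $I''$-columns are constant, using that a set of pairwise twins induces a clique or an independent set), but the underlying argument is identical.
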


\begin{proof}
Let $H:=G//I_1 //I_2 //\cdots //I_m$.
For each $j\in \{1, \ldots, m\}$, let~$w_j$ be the vertex kept from~$I_j$ in~$H$.
Let $I:=\bigcup_{j\in \{1, \ldots, m\}}I_j$.
Suppose that~$L_H$ is a linear ordering of~$H$ with the optimal width. 
We obtain a linear ordering $L_G$ of~$G$ from~$L_H$ by replacing each $w_j\in \{w_1, \ldots, w_m\}$ with any linear ordering of~$I_j$.
Let $L_G:=(v_1, v_2, \ldots, v_n)$. 
We claim that $L_G$ has width at most $\lrw(H)+1$. Let $i\in \{1, \ldots, n\}$, and let $L:=\{v_j \mid 1\le j\le i\}$ and $R:=V(G)\setminus L$.
It suffices to show that $\rank(A(G)[L, R])\le \lrw(H)+1$.
We will use the fact that 
\begin{itemize}
\item[($\ast$)] if two rows of a matrix $M$ are the same, then the matrix obtained from $M$ by removing one of these rows has the same rank as $M$, and the same argument holds for columns.
\end{itemize}
We observe that at most one set of $I_1, I_2, \ldots, I_m$ may have a vertex in both $L$ and $R$.
Let $J_1:=\{i\in \{1, 2, \ldots, m\} \mid I_i\cap L\neq \emptyset\}$ and $J_2:=\{i\in \{1, 2, \ldots, m\} \mid I_i\cap R\neq \emptyset\}$, and 
let $W_1:=\{w_i \mid i\in J_1\}$ and $W_2:=\{w_i \mid i\in J_2\}$.
First assume that no set of $I_1, I_2, \ldots, I_m$ has a vertex in both $L$ and $R$.
By ($\ast$), we have 
$\rank\big(A(G)[L, R]\big)
=\rank\big(A(G)[(L\setminus I)\cup W_1, (R\setminus I)\cup W_2]\big)$.
As the partition $((L\setminus I)\cup W_1, (R\setminus I)\cup W_2)$ is considered when computing the width of $L_H$, we have $\rank(A(G)[L, R])\le \lrw(H)$.
Therefore, we may assume that there is a set $I_p$ of $I_1, \ldots, I_m$ having a vertex in both $L$ and $R$.
Let $x\in I_p\cap L$ and $y\in I_p\cap R$.
By ($\ast$), we have 
\begin{align*}
&\rank\big(A(G)[L, R]\big) \\
=&\rank\big(A(G)[(L\setminus I)\cup (W_1\setminus \{w_p\}) \cup \{x\}, (R\setminus I)\cup (W_2\setminus \{w_p\})\cup \{y\}]\big).
\end{align*}
Note that one of the partitions $((L\setminus I)\cup W_1, (R\setminus I)\cup (W_2\setminus \{w_p\}))$ 
and $((L\setminus I)\cup (W_1\setminus \{w_p\}), (R\setminus I)\cup W_2)$ is considered when computing the width of $L_H$. 
Since $x,y,w_p\in I_p$, the matrix 
\[A(G)[(L\setminus I)\cup (W_1\setminus \{w_p\}) \cup \{x\}, (R\setminus I)\cup (W_2\setminus \{w_p\})\cup \{y\}]\]
can be obtained from 
$A(G)[(L\setminus I)\cup W_1, (R\setminus I)\cup (W_2\setminus \{w_p\})]$ by adding one column corresponding to $y$, and 
it also can be obtained from 
$A(G)[(L\setminus I)\cup (W_1\setminus \{w_p\}), (R\setminus I)\cup W_2]$ by adding one row corresponding to $x$.
This implies that $\rank(A(G)[L, R])\le \lrw(H)+1$.
We conclude that $\lrw(G)\le \lrw(H)+1$.
\end{proof}

A \emph{path decomposition} of a graph $G$ is an ordered family ${\cal B}=\{B_1,\ldots,B_r\}$ of subsets of $V(G)$ 
satisfying the following:
\begin{enumerate}
\item For every $v\in V(G)$ there exists a $t\in \{1,\ldots,r\}$ with $v\in B_t$.
\item For every $uv\in E(G)$ there exists a $t\in \{1,\ldots,r\}$ with $\{u,v\}\subseteq B_t$.
\item For every $v\in V(G)$, the set $\{t\in \{1,\ldots,r\}\mid v\in B_t\}$ consists of consecutive integers.
\end{enumerate}
The \emph{width} of a path decomposition $(P,\mathcal{B})$ is defined as 
$\max\{|B_t|\mid t \in \{1,\ldots,r\}\}-1$.
The \emph{path-width} of $G$ is the minimum width among all path decompositions of $G$.

We finish this section by proving that every tree with linear rank-width~$1$ is a caterpillar 
(recall  that a caterpillar is a tree that contains a path~$P$, such that every vertex not on~$P$ has a neighbour in~$P$).

\begin{theorem}[Adler and Kant\'e~\cite{AdlerK2015}]\label{thm:lrwpwtree}
	For every tree $T$, the linear rank-width of a tree~$T$ is equal to the path-width of $T$. 
\end{theorem}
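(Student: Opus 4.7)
The plan is to prove the two inequalities $\lrw(T)\le\pw(T)$ and $\pw(T)\le\lrw(T)$ separately. For the upper bound, I would invoke the classical identity $\pw(T)=\operatorname{vs}(T)$, where the \emph{vertex separation number} $\operatorname{vs}(T)$ is the minimum, over linear orderings $(v_1,\ldots,v_n)$ of $V(T)$, of $\max_{1\le i\le n}|\partial_i|$ with $X_i:=\{v_1,\ldots,v_i\}$ and $\partial_i:=\{v\in X_i : N_T(v)\not\subseteq X_i\}$. Fix an ordering witnessing $\operatorname{vs}(T)$. Every row of the cut matrix $A_T[X_i,V\setminus X_i]$ indexed by a vertex outside $\partial_i$ is zero, so $\cutrk_T(X_i)\le|\partial_i|\le\operatorname{vs}(T)$, and hence $\lrw(T)\le\pw(T)$.

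For the converse $\pw(T)\le\lrw(T)$, the plan is to fix an optimal linear ordering witnessing $\lrw(T)=k$ and reshuffle it so that every $|\partial_i|$ is at most $k$. The key tree-specific observation is that the bipartite cut subgraph $B_i$ between $\partial_i$ and its counterpart in $V\setminus X_i$ is a forest; a short induction on edge-components shows that its $GF(2)$ biadjacency rank equals $\sum_j\min(a_j,b_j)$, summed over the edge-components with part sizes $a_j,b_j$. Consequently, whenever $|\partial_i|=\sum_j a_j$ exceeds $k\ge\cutrk_T(X_i)$, some component satisfies $a_j>b_j$, so several vertices of $\partial_i$ share the same small set of neighbors in $V\setminus X_i$ and behave like false twins with respect to the cut. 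Using Lemma~\ref{lem:twinreduced} together with a local modification of the ordering that advances those common neighbors to just after the would-be twins, one strictly decreases $|\partial_i|$ without raising any cut-rank. Iterating on $\sum_i|\partial_i|$ yields an ordering with $|\partial_i|\le k$ at every $i$, and the identity $\pw(T)=\operatorname{vs}(T)$ then gives $\pw(T)\le k=\lrw(T)$.

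The main obstacle is verifying that the local reshuffling in the harder direction preserves the bound $\lrw\le k$ globally: advancing a common neighbor in the ordering could in principle enlarge a cut-rank at some earlier cut, so one must maintain a monotonicity invariant on the cut forests $B_i$ as the ordering evolves. A likely cleaner route is induction on $|V(T)|$ that deletes a leaf (or contracts a false-twin pair) whose removal decreases both $\pw$ and $\lrw$ in lockstep; the delicate point is guaranteeing that such a leaf or twin always exists, which is precisely where the tree structure is exploited, paralleling the standard proofs of the identity $\pw=\operatorname{vs}$ for trees.
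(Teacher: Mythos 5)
Note first that the paper does not prove this statement: it is imported verbatim from Adler and Kant\'e~\cite{AdlerK2015}, whose argument goes through a characterization of the linear rank-width of trees analogous to the Takahashi--Ueno--Kajitani characterization of path-width (Theorem~\ref{thm:charpw}). So your proposal has to stand entirely on its own, and only its first half does. The inequality $\lrw(T)\le\pw(T)$ via the vertex separation number is correct (and holds for arbitrary graphs, since only the boundary vertices contribute nonzero rows to the cut matrix). The converse, which is the whole content of the theorem, rests on a false lemma. The rank over the binary field of the biadjacency matrix of a forest is its maximum matching number, not $\sum_j\min(a_j,b_j)$: any square submatrix has determinant equal to the number of perfect matchings of the corresponding induced subforest, which is $0$ or $1$, so the rank is the largest order of a square submatrix admitting a perfect matching. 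Concretely, take the double star (two adjacent centres $c_1,c_2$, each with two pendant leaves) and an ordering beginning $c_1,\ell_{21},\ell_{22}$ where $\ell_{21},\ell_{22}$ are the leaves of $c_2$; the cut forest is the whole tree with parts of sizes $a_1=b_1=3$, yet $\cutrk_T(X_3)=2$. Here $|\partial_3|=3>2=\cutrk_T(X_3)$ but no component has $a_j>b_j$, so the inference you draw from the rank deficit ("some component satisfies $a_j>b_j$") fails outright.

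Even granting a corrected version of that step (a deficit $a_j>\mu_j$ against the matching number $\mu_j$ does force some collapsing of neighbourhoods across the cut), the reshuffling argument is not actually carried out: you yourself identify the central obstacle, namely that advancing a common neighbour in the ordering may increase the cut-rank of an earlier cut, and you leave it unresolved, offering only a ``likely cleaner route'' by leaf deletion whose delicate point you again flag rather than settle. As written, $\pw(T)\le\lrw(T)$ is not established. A workable self-contained route is the one Adler and Kant\'e take: prove an analogue of Theorem~\ref{thm:charpw} for linear rank-width of trees and run the two characterizations in parallel by induction; if you want to pursue your ordering-manipulation idea instead, the invariant you need to maintain under the local moves is precisely the missing ingredient and must be stated and proved, not assumed.
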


\begin{theorem}[Takahashi, Ueno, and Kajitani \cite{Takahashi1994}]\label{thm:charpw}
 Let $G$ be a tree
 and $k$ be a positive integer.  
 Then $G$ has path-width at most $k$ if and only if for every vertex $v$, $G-v$ has at most two connected components with path-width exactly $k$ and all other connected components of
$G-v$
have path-width less than $k$.
\end{theorem}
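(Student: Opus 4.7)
The plan is to prove the two implications separately. The forward direction uses a direct analysis of an optimal path decomposition around $v$; the backward direction proceeds by induction on $|V(G)|$ using a spine construction.

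For the forward direction, fix a path decomposition $(P, \mathcal{B})$ of $G$ of width at most $k$ and an arbitrary vertex $v$. Let $P_v$ be the (necessarily connected) subpath of $P$ on which $v$ appears; then $P\setminus P_v$ consists of at most two arms $P_L$ and $P_R$. Since $G$ is a tree, each component $C$ of $G-v$ is attached to $v$ by exactly one edge, and by the coherence axiom the bags of $\mathcal{B}$ containing vertices of $C$ form a connected subpath of $P$ that intersects $P_v$. Hence $C$ either lies entirely inside the bags of $P_v$ or extends into exactly one of the two arms. If $C$ lies inside $P_v$, then deleting $v$ from every bag and restricting to $V(C)$ gives a path decomposition of $C$ of width at most $k-1$, so $\pw(C)<k$. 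At most two components can extend into arms, so at most two have path-width equal to~$k$, and all others have path-width strictly less than~$k$.

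For the backward direction I would induct on $|V(G)|$ (with a nested induction on $k$ if needed) and build a decomposition of width $\le k$ by exhibiting a \emph{spine} $S = u_1 u_2 \cdots u_m$ in $G$ such that every component of $G-V(S)$ has path-width at most $k-1$. Given such a spine, the decomposition is constructed by concatenation: at each $u_i$, let $F_i$ be the forest of components of $G-V(S)$ attached to $u_i$; by assumption $\pw(F_i)\le k-1$, and adding $u_i$ as a universal vertex to every bag of an optimal decomposition of $F_i$ yields a decomposition of $F_i\cup\{u_i\}$ of width at most $k$. Linking these segments with transition bags $\{u_i,u_{i+1}\}$ produces the path decomposition of $G$. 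The spine itself is produced by starting at a vertex $u_1$ such that $G-u_1$ has at most one component of path-width exactly $k$, and then extending greedily into such a component, a choice which by hypothesis is essentially forced (at most two directions at each step, one of which is the reverse step). The main obstacle is verifying that when we descend into a subtree $T'$ along the spine the local hypothesis still holds for $T'$ in the appropriate form; I would address this by strengthening the inductive statement to a rooted version (allowing only one high-pw direction from the root) and by using that the count of pw-$=k$ components in subtrees is controlled by the corresponding count in $G$ via monotonicity of path-width under taking induced subtrees in a tree.
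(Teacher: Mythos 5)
You should first note that the paper offers no proof of this statement at all: it is imported as a known theorem of Takahashi, Ueno and Kajitani, so there is no internal argument to compare yours against, and your attempt must stand on its own. In the forward direction it does not. The step ``at most two components can extend into arms, so at most two have path-width equal to $k$'' is invalid: nothing limits the number of components of $G-v$ whose bag-intervals leave $P_v$ (for $K_{1,3}$ with centre $v$ and the decomposition $\{u_1,u_2\},\{u_1,u_2,v\},\{v,u_3\},\{u_3\}$, all three leaf-components ``extend into arms''), and a single component's interval can also overhang both ends of $P_v$, contrary to your ``exactly one''. Your correct observation is that a component confined to $P_v$ has path-width at most $k-1$; what is missing is the converse control. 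The standard fix: if $\pw(C)\ge k$ then some bag $B_j$ has $\lvert B_j\cap V(C)\rvert\ge k+1$, hence $B_j\subseteq V(C)$ and $v\notin B_j$, so $j$ lies in one of the two arms; and two heavy components cannot place such full bags in the same arm, since the interval of bags meeting the component whose full bag is nearer to $P_v$ must stretch from its own full bag to $P_v$ and would then contain the other component's full bag, which is disjoint from it. That pigeonhole, not a count of overhanging components, gives the bound of two.

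The backward direction is where the real content lies, and the spine construction as you describe it fails. The assertion that the extension is ``essentially forced (at most two directions at each step, one of which is the reverse step)'' is false: the hypothesis permits a vertex $u_i$ at which two components of $G-u_i$ have path-width $k$ and neither contains $u_{i-1}$. Concretely, for $k=1$ let $u_2$ be adjacent to a leaf $u_1$ and to the endpoints of two disjoint paths on three vertices; every vertex satisfies the hypothesis and $\pw(G)=1$, yet $u_1$ is an admissible start under your criterion (one heavy component), and upon reaching $u_2$ the greedy faces two heavy forward directions, so whichever branch it abandons survives as a component of $G-V(S)$ of path-width $k$, destroying the construction. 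The rooted strengthening you gesture at does not repair this, because its hypothesis (at most one heavy direction away from the root at every vertex) is not implied by the unrooted hypothesis for an arbitrary admissible root -- that is precisely what fails at $u_2$ above. The missing idea is structural: call $v$ \emph{$k$-critical} if $G-v$ has two components of path-width $k$, prove that all $k$-critical vertices lie on a single path (if $w$ were critical but lay in a light component of $G-v$ for some critical $v$, then one of the two heavy components of $G-w$ would be contained in that light component, a contradiction), route the spine through all of them, and only then extend greedily at the two ends. Given a correct spine, your concatenation of segments with $u_i$ added to every bag and transition bags $\{u_i,u_{i+1}\}$ is fine.
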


	\begin{lemma}\label{lem:charlrw1tree}
	A tree has linear rank-width at most~$1$ if and only if it is a caterpillar.
      \end{lemma}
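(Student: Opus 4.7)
The plan is to reduce the statement to a well-known fact about path-width via Theorem~\ref{thm:lrwpwtree}: since $\lrw(T) = \pw(T)$ for every tree~$T$, it suffices to show that a tree has path-width at most~$1$ if and only if it is a caterpillar. Then I would use Theorem~\ref{thm:charpw} with $k=1$ as the main structural tool for the hard direction.

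For the ``if'' direction, let $T$ be a caterpillar with spine $v_1,\ldots,v_k$. I would exhibit an explicit path decomposition of width~$1$: start with bags $B_i=\{v_i,v_{i+1}\}$ for $1\le i\le k-1$, and for each leaf $\ell$ attached to some spine vertex $v_i$ insert an extra bag $\{v_i,\ell\}$ adjacent in the decomposition path to a bag containing $v_i$. Every bag has size~$2$, so the width is~$1$, and it is immediate that the three properties of a path decomposition are satisfied. Hence $\pw(T)\le 1$, and by Theorem~\ref{thm:lrwpwtree}, $\lrw(T)\le 1$.

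For the ``only if'' direction I would argue the contrapositive. Suppose $T$ is not a caterpillar. Then the tree $T'$ obtained by deleting all leaves of $T$ contains a vertex $v$ of degree at least~$3$, with neighbours $u_1,u_2,u_3$ in $T'$. Since each $u_i$ is not a leaf of $T$, each $u_i$ has a further neighbour $w_i\neq v$ in $T$. Deleting $v$ from $T$ therefore produces three distinct connected components $C_1,C_2,C_3$, with $u_i w_i\in E(C_i)$, so each $C_i$ contains a $P_2$ and consequently has path-width at least~$1$. Applying Theorem~\ref{thm:charpw} with $k=1$ and this vertex $v$: if $\pw(T)\le 1$, then at most two components of $T-v$ have path-width exactly~$1$ and the rest are single vertices. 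This contradicts the existence of three components $C_i$ of path-width at least $1$. Hence $\pw(T)\ge 2$ and, by Theorem~\ref{thm:lrwpwtree}, $\lrw(T)\ge 2$.

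I do not expect any real obstacle here; the only point requiring a little care is verifying that a non-caterpillar really contains a vertex whose removal leaves three components each carrying an edge. That is handled by passing to the ``derived'' tree $T'$ obtained by stripping leaves and locating a vertex of degree at least~$3$ there, which guarantees that the three branches through $u_1,u_2,u_3$ each reach beyond a single vertex in~$T$.
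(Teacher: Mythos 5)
Your proof is correct and follows essentially the same route as the paper: both directions are reduced to path-width via Theorem~\ref{thm:lrwpwtree}, and the lower bound for non-caterpillars is obtained exactly as in the paper by producing a vertex whose deletion leaves three components each containing an edge and invoking Theorem~\ref{thm:charpw} with $k=1$. The only difference is cosmetic: for the caterpillar direction the paper runs an induction through Theorem~\ref{thm:charpw}, whereas you write down an explicit width-$1$ path decomposition (just take care that each leaf bag $\{v_i,\ell\}$ is inserted between two consecutive spine bags that both contain $v_i$, so that no other vertex's run of bags is broken).
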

 
      \begin{proof}
      Let $T$ be a tree. 
      First suppose that $T$ is not a caterpillar. Then $T$ contains $S_{2,2,2}$ as an induced subgraph. 
      Thus, $T$ contains a vertex $v$ such that $T-v$ contains at least three connected components each containing an edge.
      So, $T-v$ contains three connected components having path-width at least $1$.
      By Theorem~\ref{thm:charpw}, $T$ has path-width at least $2$, and by Theorem~\ref{thm:lrwpwtree}, 
      $T$ has linear rank-width at least $2$.
      
      Now suppose that $T$ is a caterpillar.
      We prove by induction on $\abs{V(T)}$ that $T$ has path-width at most~$1$. 
      We may assume that $T$ has at least two vertices.
      Note that for every vertex $v$, $T-v$ has at most two connected components having an edge, which are still caterpillars, 
      and all the other connected components are isolated vertices.
      So, $T-v$ has at most two connected components having path-width $1$ by induction, and all the other connected components have path-width $0$.
      Thus, by Theorem~\ref{thm:charpw}, $T$ has path-width at most $1$. This proves the claim.
       We now apply Theorem~\ref{thm:lrwpwtree} to conclude that every caterpillar has linear rank-width at most $1$.
      \end{proof}

\section{The Proof of Theorem~\ref{t-main1}}\label{sec:caterpillar}

In this section, we prove Theorem~\ref{t-main1}, which states that for every tree $T$ that is not a caterpillar, the class of $T$-pivot-minor-free graphs has unbounded linear rank-width.

		\begin{figure}[t]
	\centerline{
	\includegraphics[scale=0.6]{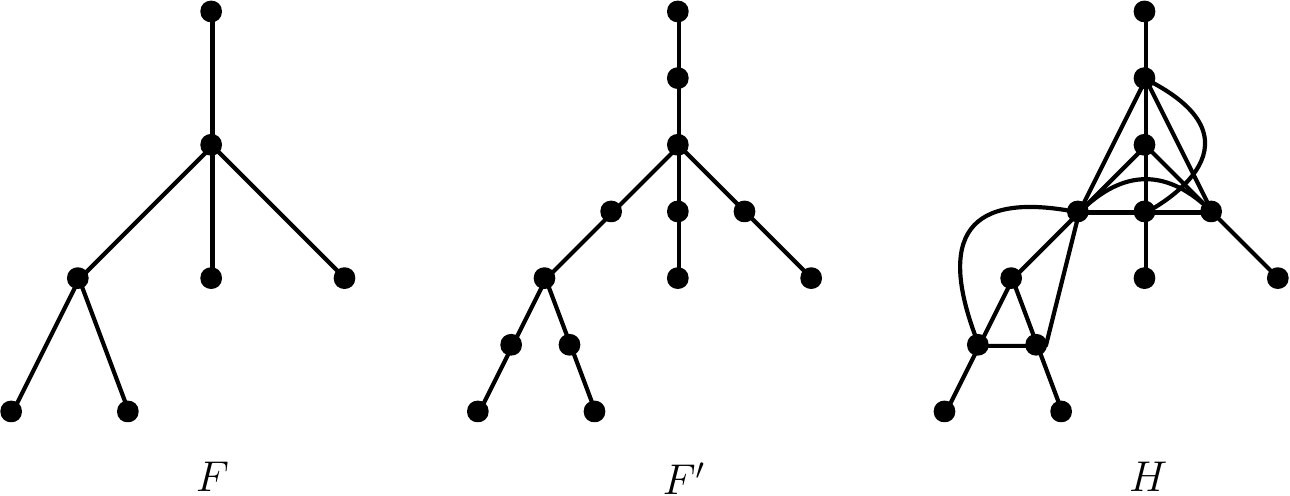}}
	\caption{A tree $F$, the tree $F'$ that is obtained from $F$ by subdividing each edge of $F$ once, and the graph 
	$H\in {\cal C}$ that is obtained from $F'$ after  applying a local complementation at every vertex of degree at least $3$ in $F'$.}
	\label{fig:classt}
	\end{figure}
	
	Let~$\mathcal{C}$ be the class of graphs that can be obtained from the~$1$-subdivision of a tree by applying a local complementation at every vertex of degree at least~$3$.  	
	We give an example of a graph in ${\cal C}$ in \figurename~\ref{fig:classt}.
Our proof of Theorem~\ref{t-main1} consists of the following parts:

\begin{enumerate}
\item we show that ${\cal C}$ has unbounded linear rank-width;
\item we show that every graph in ${\cal C}$ is a distance-hereditary graph with some additional properties needed to prove the third step; and
\item we show that  every graph in ${\cal C}$ is $T$-pivot-minor-free whenever $T$ is a tree that is not a caterpillar.
\end{enumerate}

We start with the following lemma that proves the first part.

\begin{lemma}\label{l-c}
The class~${\cal C}$ has unbounded linear rank-width.
\end{lemma}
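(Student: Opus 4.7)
The plan is to exhibit an explicit family of graphs $H_n\in\mathcal{C}$ with $\lrw(H_n)\to\infty$. The key observation is that, since local complementations preserve the cut-rank function (Lemma~\ref{lem:bouchet}), the linear rank-width of any $H\in\mathcal{C}$ equals the linear rank-width of the underlying $1$-subdivision, which is a tree. Hence it suffices to produce trees whose $1$-subdivisions have unbounded path-width.

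Concretely, I would let $F_n$ be the complete binary tree of depth $n$ (or any family of trees with unbounded path-width), let $T_n$ denote its $1$-subdivision, and let $H_n\in\mathcal{C}$ be the graph obtained from $T_n$ by applying a local complementation at each vertex of $F_n$ of degree at least $3$. Then the desired estimate follows from a short chain:
\[
\lrw(H_n) \;=\; \lrw(T_n) \;=\; \pw(T_n) \;\geq\; \pw(F_n),
\]
where the first equality is Lemma~\ref{lem:bouchet}, the second is Theorem~\ref{thm:lrwpwtree} (applicable since $T_n$ is a tree), and the last inequality uses that $F_n$ is a minor of $T_n$ (contract each subdivision edge) combined with the minor-monotonicity of path-width. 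Since complete binary trees have unbounded path-width, the right-hand side tends to infinity, giving the result.

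There is no real obstacle here: the argument is a direct combination of Lemma~\ref{lem:bouchet}, Theorem~\ref{thm:lrwpwtree}, and two standard facts, namely the minor-monotonicity of path-width and the unboundedness of the path-width of complete binary trees. The minor-monotonicity is an easy check: deleting a vertex or an edge from a path decomposition preserves all required properties, while contracting an edge $uv$ into a new vertex $w$ is handled by replacing $u$ and $v$ by $w$ in every bag, which yields a path decomposition of no greater width because the set of bags containing $w$ is the union of the subpaths for $u$ and $v$, and these subpaths meet at the bag that witnessed the edge $uv$.
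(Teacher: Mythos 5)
Your proof is correct, and it reaches the same place as the paper by a slightly different route. The paper's argument cites Adler and Kant\'e for the fact that trees have unbounded linear rank-width, observes that a graph $G$ is a vertex-minor of its $1$-subdivision (locally complement at each subdivision vertex and delete it), and then applies Lemma~\ref{lem:bouchet} twice -- once to transfer unboundedness from trees to their $1$-subdivisions, and once to note that the local complementations defining $\mathcal{C}$ do not change the linear rank-width. You instead exploit the fact that the $1$-subdivision of a tree is itself a tree, so Theorem~\ref{thm:lrwpwtree} applies to it directly; you then pass to the un-subdivided tree via minor-monotonicity of path-width and finish with an explicit family (complete binary trees) of unbounded path-width. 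This makes your argument self-contained modulo two standard facts about path-width, and it avoids the vertex-minor observation entirely, at the cost of being tied to a concrete family rather than to all trees; the final step -- that the local complementations producing a member of $\mathcal{C}$ preserve linear rank-width exactly, by Lemma~\ref{lem:bouchet} -- is identical in both proofs. Both arguments are sound.
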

\begin{proof}
Adler and Kant\'e~\cite{AdlerK2015} proved that trees have unbounded linear rank-width.
Note that the 1-subdivision $H$ of a graph $G$ contains $G$ as a vertex-minor:  for every subdivided vertex in $H$, perform a local complementation and remove it; this yields $G$.
So, by Lemma~\ref{lem:bouchet}, 
the class of $1$-subdivisions of trees also has unbounded linear rank-width. 
	As local complementations do not change the linear rank-width of a graph by Lemma~\ref{lem:bouchet},  this means that
	$\mathcal{C}$ has unbounded linear rank-width.
\end{proof}

We will now prove that ${\cal C}$ is a subclass of the class of distance-hereditary graphs with some additional useful properties.
	In order to do this, we need the notion of a canonical split decomposition of a graph~\cite{CunninghamE80}, which we define below.

\begin{figure}
\centerline{\includegraphics[scale=0.5]{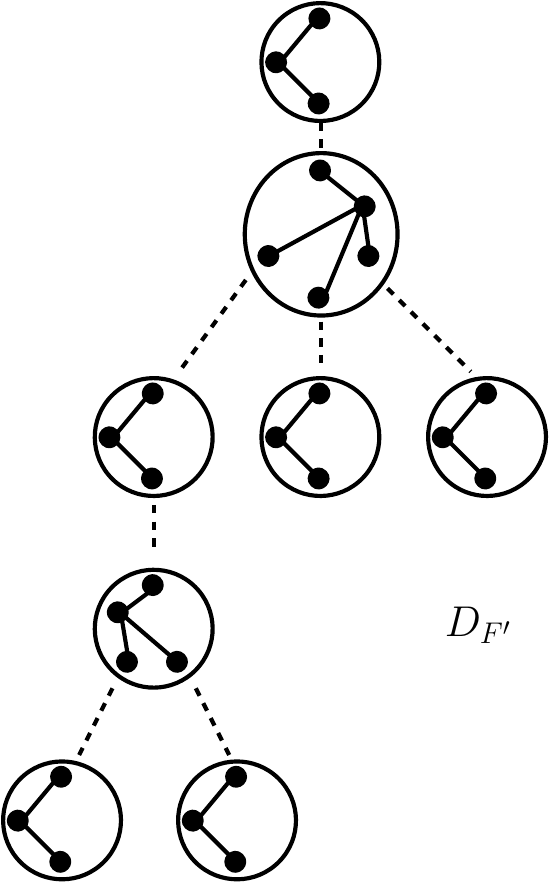} \quad\quad
\includegraphics[scale=0.5]{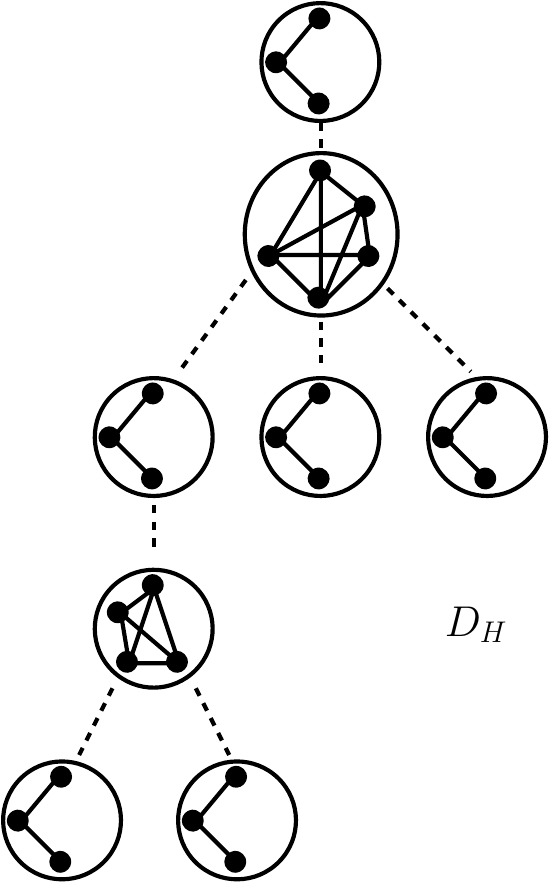} }
\caption{For the graphs $F'$ and $H$ in \figurename~\ref{fig:classt}, the decompositions $D_{F'}$ and $D_H$ are the canonical split decompositions of $F'$ and $H$, respectively.
Dashed edges denote marked edges and each circle denotes a bag.
Note that by the definition of a split, every bag contains at least three vertices.}
\label{fig:decomposition}
\end{figure}

	A vertex partition~$(X,Y)$ of a connected graph $G$ is a \emph{split} of~$G$ if $\abs{X}\ge 2, \abs{Y}\ge 2$, 
	and~$N_G(Y)$ is complete to~$N_G(X)$. 
	A connected graph $G$ on at least five vertices is \emph{prime} if it has no split.
		A connected graph~$D$ with a distinguished set of edges~$M(D)$ is a \emph{marked graph} if~$M(D)$ is a matching and each edge in~$M(D)$ is a cut edge.
	An edge in~$M(D)$ is a \emph{marked edge}, and every other edge of $D$ is an \emph{unmarked edge}.
	A vertex incident with a marked edge is a \emph{marked vertex}, and every other vertex of $D$ is an \emph{unmarked vertex}.	
	Each connected component of $D-M(D)$ is a \emph{bag} of~$D$. If a marked edge $e$ is incident with a vertex of a bag~$B$, we say that $B$ is
        \emph{incident} with $e$.  A bag $B_1$ of $D$ is a neighbour bag of a bag $B_2$ of $D$ if there is a marked edge incident with both $B_1$ and $B_2$.
        The \emph{decomposition tree} of $D$ is the graph obtained from $D$ by contracting each bag into a vertex.

	If~$G$ has a split~$(X,Y)$, we construct a marked graph~$D$ on the vertex set $V(G) \cup \{x_1,y_1\}$ for some new vertices~$x_1$ and~$y_1$ such that
	\begin{itemize}
	\item for every two
	distinct vertices $x,y$ with $\{x,y\}\subseteq X$ or $\{x,y\}\subseteq Y$, the property $xy\in E(G)$ holds if and only if $xy\in E(D)$,
	\item $x_1y_1$ is a new marked edge,
	\item $X$ is anti-complete to~$Y$,
	\item $x_1$ is complete to $N_G(Y)$ (with only unmarked edges) and has no neighbours in $V(G)\setminus N_G(Y)$,
	\item $y_1$ is complete to $N_G(X)$ (with only unmarked edges) and has no neighbours in $V(G)\setminus N_G(X)$. 
	\end{itemize}
	The graph~$D$ is a \emph{simple decomposition} of~$G$.
	To obtain a split decomposition, we will recursively take a simple decomposition of a bag, 
	and when we take a simple decomposition of a bag, all the marked vertices remain marked vertices.
That is, a	
	\emph{split decomposition} of a connected graph~$G$ is a marked graph~$D$ defined inductively to be either~$G$ or 
	a marked graph obtained from a split decomposition~$D'$ of~$G$ 
by replacing some bag $B$ of $D'$ by the bags of a simple decomposition $B'$ of $B$ and keeping all the marked edges between vertices of $V(B)$ and vertices of $V(D')\setminus V(B)$.
We give an example of a split decomposition in \figurename~\ref{fig:decomposition}.	

	For a marked edge $xy$ of a marked graph $D$, the \emph{recomposition
  of $D$ along $xy$} is the marked graph $(D\wedge xy) -\{x,y\}$, where when we pivot $xy$, we add unmarked edges between $N_D(x)\setminus \{y\}$ and $N_D(y)\setminus \{x\}$.  This operation can be seen as merging two adjacent bags $B_1$ and $B_2$ into one bag $B$
  where the union of $B_1$ and $B_2$ was a simple decomposition of $B$.
  It is not hard to see that  if $D$ is a split decomposition of $G$, then $G$ can be obtained from $D$ by 
recomposing along all the marked edges.  

	A split decomposition~$D$ is \emph{canonical} if each bag of~$D$ is either a prime graph, a star, or a complete graph, and 
for every marked edge~$xy$ in~$D$, recomposing~$xy$
results in a split decomposition having a bag that is neither a prime graph, a star, nor a complete graph. 	
We note that the split decompositions in \figurename~\ref{fig:decomposition} are canonical.
	We say that a bag is a {\it star bag} if it is a star and a {\it complete} bag if it is a complete graph.

	\begin{theorem}[Cunningham and Edmonds~\cite{CunninghamE80}] \label{thm:CED} 
	Every connected graph has a unique canonical split decomposition, up to isomorphism.
	\end{theorem}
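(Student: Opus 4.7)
The plan is to establish existence and uniqueness of the canonical split decomposition separately. For existence, I would run an iterative procedure starting from the trivial one-bag decomposition consisting of $G$ itself. Two reductions drive the process: (i) whenever some bag $B$ admits a split $(X,Y)$, replace $B$ by its simple decomposition along $(X,Y)$; (ii) whenever a marked edge joins two bags whose recomposition is prime, a star, or a complete graph, perform that recomposition. Reduction (i) strictly increases the number of bags while (ii) strictly decreases it, so a lexicographic potential such as the pair (number of bags, $\sum_B|V(B)|^2$) shows that the process terminates. At the fixed point every bag must be prime, a star, or a complete graph, and no two adjacent bags can be merged while preserving this property, which is precisely the definition of canonicality.

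Uniqueness hinges on the notion of crossing: two splits $(X_1,Y_1)$ and $(X_2,Y_2)$ of $G$ cross if each of the four intersections $X_1\cap X_2$, $X_1\cap Y_2$, $Y_1\cap X_2$, $Y_1\cap Y_2$ is nonempty. I would establish three structural facts by elementary case analysis. First, any pairwise non-crossing family of splits of $G$ can be simultaneously realized by a single split decomposition, and conversely every split decomposition arises from such a laminar family. Second, a split that crosses no other split---a so-called \emph{strong} split---must appear as a marked edge in every split decomposition of $G$, and the unmarked vertices it separates are the same in all decompositions. Third, whenever two splits of a graph $H$ cross, $H$ is forced to reduce to a star or a complete graph inside the decomposition, and all splits of $H$ in that block fall into one of two restricted patterns (center-versus-leaves type, or arbitrary bipartition).

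Combining these facts, two canonical split decompositions $D$ and $D'$ of $G$ share the same set of strong splits, hence the same prime bags and the same large-scale tree skeleton. The only remaining freedom lives inside maximal subtrees consisting solely of star and complete bags. The canonicality conditions enforce rigid adjacency rules inside such a degenerate block: two complete bags cannot be adjacent (they would recompose to a complete graph); two star bags can be adjacent only via a marked edge joining the center of one to a leaf of the other (otherwise the recomposition would again be a star or a complete graph); and an analogous constraint governs star-complete adjacencies. These rules pin down the internal structure of each block uniquely, so $D$ and $D'$ agree up to isomorphism.

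The main obstacle is the analysis of the degenerate blocks. Two crossing splits already open up a combinatorial family of alternative local decompositions, and one must argue that the canonicality adjacency rules eliminate all of this freedom---that is, that the unique alternating tree of star and complete bags surviving those rules is indeed produced by the existence procedure above. This coordinated use of the crossing-split analysis and the adjacency constraints is the technical heart of the Cunningham--Edmonds theorem and would be the most delicate part of the write-up.
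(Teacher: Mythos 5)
The paper does not prove this statement: Theorem~\ref{thm:CED} is quoted directly from Cunningham and Edmonds~\cite{CunninghamE80}, so there is no in-paper argument to compare yours against. Your outline is the standard route from the literature (existence by iterated refinement; uniqueness via non-crossing and strong splits followed by a rigidity analysis of the star/complete blocks), so the overall plan is the right one.

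That said, two concrete gaps remain if this were to stand as a proof. First, the termination argument fails as written: a complete graph or a star on at least four vertices itself admits splits, so rule (i) applied to such a bag produces two bags whose recomposition is again complete (or again a star), which rule (ii) immediately undoes; the potential $(\text{number of bags},\ \sum_B|V(B)|^2)$ moves up under (i) and down under (ii) and is not lexicographically monotone, so oscillation is not excluded. You must restrict (i) to bags that are not already prime, a star, or a complete graph, and then argue separately that the merges of (ii) never create a bag to which (i) would reapply (they cannot create a prime bag, since the recomposed marked edge witnesses a split of the merged bag, so only star/complete bags arise). Second, the three ``structural facts'' you list are essentially the content of the Cunningham--Edmonds theorem rather than reductions of it: that a strong split is realized as a marked edge in every decomposition, and that crossing splits force the degenerate star/complete pattern, are the two lemmas that carry all the weight, and your final paragraph concedes that the rigidity of the degenerate blocks is left open. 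As a plan this is faithful to the original proof; as a proof it defers exactly the steps that are hard.
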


Bouchet~\cite{Bouchet1988} described how split decompositions change under applying local complementations.
A vertex~$v$ in a split decomposition~$D$ \emph{represents} an unmarked vertex~$x$ (or is a \emph{representative} of~$x$) if either $v=x$ or there is a path of even length from~$v$ to~$x$ in~$D$ starting with a marked edge such that marked edges and unmarked edges appear alternately in the path. 
Observe that for every vertex~$v$ in a split decomposition $D$ of a graph $G$, there exists 
a vertex 
$x\in V(G)$ such that $v$ represents $x$.
Two unmarked vertices~$x$ and~$y$ are \emph{linked} in~$D$ if there is a path from~$x$ to~$y$ in~$D$ such that unmarked edges and marked edges appear alternately in the path.

A \emph{local complementation} at an unmarked vertex~$x$ in a split decomposition~$D$, denoted by~$D*x$, is the operation that replaces each bag~$B$ containing a representative~$w$ of~$x$ with~$B*w$. 
	
\begin{lemma}[Bouchet~\cite{Bouchet1988}]\label{lem:localdecom} 
Let~$D$ be the canonical split decomposition of a connected graph~$G$.
If~$x$ is an unmarked vertex of~$D$, then~$D*x$ is the canonical split decomposition of~$G*x$.
\end{lemma}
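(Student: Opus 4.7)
The plan is to verify the lemma in two parts: first, that $D*x$ is a split decomposition of $G*x$, and second, that this split decomposition is canonical.

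For the first part I would use the standard alternating-path characterization of recomposition: two unmarked vertices $u,v$ of a split decomposition $D$ are adjacent in the recomposed graph if and only if there is a path from $u$ to $v$ in $D$ whose edges alternate between unmarked and marked, beginning and ending with unmarked edges (this is just the iterated form of the recomposition rule already stated in the paper for a single marked edge). Under this characterization, what needs to be shown is that for any two unmarked vertices $u,v \ne x$ such an alternating $u$--$v$ path exists in $D*x$ if and only if either (a) such a path exists in $D$ and it is \emph{not} the case that both $u$ and $v$ are linked to $x$ in $D$, or (b) no such path exists in $D$ and both $u$ and $v$ are linked to $x$ in $D$. This is exactly the rule $u\sim v$ in $G*x$ iff $(u\sim v$ in $G)\oplus(u,v\in N_G(x))$, combined with the fact that linkage to $x$ in $D$ coincides with adjacency to $x$ in $G$.

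To verify this equivalence I would reduce to the case of a simple decomposition by induction on the number of bags: pick any marked edge $y_1y_2$, use it to view $D$ as obtained from a simple decomposition into two bags $B_1,B_2$ (each replaced by its own split decomposition), and apply induction. In the simple case the representatives of $x$ are explicit; namely $\{x\}$, or $\{x,y_2\}$ if $x\in V(B_1)\setminus\{y_1\}$ is adjacent to $y_1$ in $B_1$ (and symmetrically for the other bag). A short case analysis on whether $u,v$ lie in the same bag, and on whether each is adjacent to the split vertex in its bag, verifies that recomposing $D*x$ along $y_1y_2$ produces exactly the graph obtained from $G$ by locally complementing at $x$. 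Iterating gives the result for arbitrary $D$.

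For the canonicity part, I would exploit that local complementation preserves the bag-type trichotomy: on a prime graph it yields a prime graph; on a star $K_{1,n}$ it yields $K_{n+1}$ if the operation is at the centre and a star if at a leaf; on a complete graph $K_n$ it yields a star $K_{1,n-1}$. Thus every bag of $D*x$ is again prime, a star, or a complete graph. It remains to rule out adjacent bags of $D*x$ that could be recomposed, which I would do by a short case analysis on adjacent bag pairs of $D$ according to the position of the representative of $x$ (centre vs. leaf of a star, any vertex of a complete or prime bag), using that $D$ itself has no such recomposable pair by hypothesis. The main obstacle, and the place where care is needed, is the alternating-path bookkeeping in the first part: a single alternating path can traverse several bags that contain representatives of $x$, so one must check that the cumulative effect of all the local flips coincides with the single global flip prescribed by $G*x$. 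Reducing to the simple-decomposition case as above turns this into a finite calculation, after which the uniqueness statement in Theorem~\ref{thm:CED} can be invoked to identify $D*x$ with the canonical split decomposition of $G*x$.
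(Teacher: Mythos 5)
The paper does not prove this statement: it is quoted verbatim from Bouchet's 1988 paper on transforming trees by local complementations, so there is no in-paper argument to compare against. Your reconstruction is sound and follows the standard (essentially Bouchet's) route: reduce adjacency in the recomposed graph to the alternating-path characterization, verify the local-complementation XOR rule on a simple two-bag decomposition and propagate by induction over marked edges, then check canonicity via the type trichotomy (prime $\mapsto$ prime, star $\mapsto$ star or complete, complete $\mapsto$ star) together with the fact that a merged bag is never prime, so only the complete--complete and star-centre-to-leaf configurations need to be excluded. One fact you use implicitly and should state is that each bag contains at most one representative of $x$ (because every marked edge is a cut edge, an alternating path leaving a bag through a marked edge cannot return to it), which is what makes ``replace $B$ with $B*w$'' well defined and what drives the propagation rule for representatives across a marked edge in your final case analysis.
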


Let~$x$ and~$y$ be linked unmarked vertices in a split decomposition~$D$, and let~$P$ be the path in~$D$ linking~$x$ and~$y$ where unmarked edges and marked edges appear alternately.
Observe that such a path is unique.  
The \emph{pivoting on~$xy$ of~$D$}, denoted by $D\wedge xy$, is the split decomposition obtained as follows: for each bag~$B$ containing an unmarked edge~$vw$ of~$P$, we replace~$B$ with $B\wedge vw$. 

\begin{lemma}[Adler, Kant\'e, and Kwon~\cite{AdlerKK2016}]\label{lem:pivotdecom} 
	Let~$D$ be the canonical split decomposition of a connected graph~$G$. 
	If $xy\in E(G)$, then $D\wedge xy$ is the canonical split decomposition of $G\wedge xy$.  
\end{lemma}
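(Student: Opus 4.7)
The plan is to combine Lemma~\ref{lem:localdecom} with the identity $G \wedge xy = G*x*y*x$ (which is the definition of edge pivot) and the uniqueness of the canonical split decomposition (Theorem~\ref{thm:CED}). First, I would apply Lemma~\ref{lem:localdecom} three times in succession: since $x$ is an unmarked vertex of $D$, the first application yields that $D*x$ is the canonical split decomposition of $G*x$. Because local complementation does not alter the marked-edge structure, the vertex $y$ remains unmarked in $D*x$, and a second application yields that $(D*x)*y$ is the canonical split decomposition of $(G*x)*y = G*x*y$. A third application at $x$ yields that $((D*x)*y)*x$ is the canonical split decomposition of $G*x*y*x = G \wedge xy$. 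By Theorem~\ref{thm:CED}, it now suffices to verify that the marked graphs $D*x*y*x$ and $D \wedge xy$ coincide.

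To verify this equality, I would compare the two marked graphs bag by bag. Let $P = v_0, v_1, \ldots, v_{2k+1}$ denote the unique alternating path in $D$ from $x = v_0$ to $y = v_{2k+1}$, starting with the unmarked edge $v_0 v_1$, and let $B_0, B_1, \ldots, B_k$ be the bags that $P$ passes through, so that $B_i$ contains the unmarked edge $v_{2i} v_{2i+1}$ of $P$. A direct inspection shows that in $B_i$, the vertex $v_{2i}$ represents $x$ and the vertex $v_{2i+1}$ represents $y$, so the composite effect of the three local complementations on $B_i$ is $B_i * v_{2i} * v_{2i+1} * v_{2i} = B_i \wedge v_{2i}v_{2i+1}$, which agrees with the definition of $D \wedge xy$ on this bag.

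The main obstacle is to show that the bags of $D$ lying off the path $P$ are left unchanged by $*x*y*x$, since $D \wedge xy$ leaves them untouched by definition. The delicate case occurs when a vertex $a$ in an off-path bag $B'$ initially represents both $x$ and $y$; this is possible when the attachment vertex $b$ of $B'$ in some $B_i$ is adjacent, inside $B_i$, to both $v_{2i}$ and $v_{2i+1}$ (for instance, when $B_i$ is a complete bag, or a star bag whose centre is $b$). The key observation is that the first local complementation at $x$ replaces $B_i$ with $B_i * v_{2i}$, which flips the adjacency between $b$ and $v_{2i+1}$ (both being neighbours of $v_{2i}$), thereby destroying the alternating path from $a$ to $y$. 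Hence $a$ no longer represents $y$ in $D*x$, the second local complementation at $y$ does not touch $B'$, and the third local complementation at $x$ cancels the first, leaving $B'$ unchanged. The remaining subcases, in which $a$ represents only $x$, only $y$, or neither, are handled by similar but shorter arguments that also rely on tracking how representative status evolves through the three operations. Once all off-path bags are shown to be invariant, the equality $D*x*y*x = D \wedge xy$ and hence the lemma follows.
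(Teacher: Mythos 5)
The paper does not prove this lemma at all: it is imported from Adler, Kant\'e and Kwon~\cite{AdlerKK2016}, so there is no in-paper argument to compare yours against. Judged on its own terms, your architecture is the natural one and is sound: factor the pivot as $G\wedge xy=G*x*y*x$, apply Lemma~\ref{lem:localdecom} three times (correctly noting that local complementation leaves the marked edges untouched, so $y$ is still unmarked in $D*x$, and that all intermediate graphs remain connected), conclude that $D*x*y*x$ is the canonical split decomposition of $G\wedge xy$, and then verify $D*x*y*x=D\wedge xy$ as marked graphs. Your identification of $v_{2i}$ and $v_{2i+1}$ as the representatives of $x$ and $y$ in each path bag $B_i$, and your key observation that in the doubly-represented case the complementation at $v_{2i}$ flips the edge $bv_{2i+1}$ and thereby destroys the representation of $y$ in the off-path bag, are both correct.

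Three points need repair before this is a complete proof. First, the parenthetical example ``a star bag whose centre is $b$'' is impossible: if $b$ were the centre of a star bag $B_i$, then $v_{2i}$ and $v_{2i+1}$ would be non-adjacent leaves, contradicting that $v_{2i}v_{2i+1}$ is an unmarked edge of $P$; the ``both'' case arises only in complete or prime bags. Second, the remaining subcases are not ``similar but shorter'': when $a$ represents only $x$ in $D$, the complementation at $v_{2i}$ \emph{creates} the edge $bv_{2i+1}$, so $a$ \emph{acquires} the status of representing $y$, and $B'$ is complemented at $a$ by the first two operations rather than by the first and third (symmetrically when $a$ represents only $y$, it acquires the status of representing $x$ after the second operation). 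The unifying claim you actually need is that every off-path bag is complemented at its unique candidate representative an even number of times. Third, your analysis covers only bags attached directly to a path bag; bags deeper in the decomposition tree require an induction on their distance to $P$, tracking how the single unmarked adjacency that transmits representative status into the parent bag evolves under the three operations. You should also record, for the path bags themselves, that each unmarked edge of $P$ is incident with the current complementation centre of its bag at every step and is therefore preserved, so that the three centres acting on $B_i$ really are $v_{2i}$, $v_{2i+1}$, $v_{2i}$ in that order.
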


An \emph{internal} edge of a tree is an edge that is not incident with a vertex of degree~$1$.  
We also need the following result due to Bouchet~\cite{Bouchet1988}.

\begin{lemma}[Bouchet~\cite{Bouchet1988}]\label{thm:bouchettree}
  A canonical split decomposition of a tree $T$ can be constructed by replacing each internal edge of $T$ by a path of length $3$, the middle edge of the path  being the
  marked edge. 
\end{lemma}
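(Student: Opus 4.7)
The plan is to let $D$ be the marked graph produced by the subdivision construction and then verify the three defining properties of a canonical split decomposition: (i) that $D$ is a split decomposition of $T$, (ii) that each bag of $D$ is a star or a complete graph, and (iii) that no marked edge can be recomposed without breaking (ii). Uniqueness of the canonical split decomposition (Theorem~\ref{thm:CED}) then gives the claim.

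For (i), because each internal edge of $T$ contributes its own pair of fresh marked vertices, the set $M(D)$ is a matching and, since $T$ is a tree, each marked edge is a cut edge of $D$. To recover $T$, one recomposes the marked edges one at a time: for the marked edge $u'v'$ arising from an internal edge $uv$ of $T$ we have $N_D(u')=\{u,v'\}$ and $N_D(v')=\{u',v\}$, so pivoting $u'v'$ inserts the edge $uv$ and then deleting $u',v'$ leaves this edge in place. Iterating over all internal edges yields $T$.

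For (ii), I would read off the bags directly. For each $w\in V(T)$, the bag $B_w$ containing $w$ consists of $w$, the marked endpoint on the $w$-side of every internal edge of $T$ at $w$, and every leaf-neighbor of $w$ in $T$; all of these are joined to $w$ by unmarked edges and to nothing else inside the bag. Hence $B_w$ is a star centered at $w$. The degenerate cases $\abs{V(T)}\leq 2$ give a single-vertex or $K_2$ bag, which counts as a complete graph.

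For (iii), consider recomposing the marked edge $xy$ coming from an internal edge $v_1v_2$ of $T$, with $x\in B_{v_1}$ and $y\in B_{v_2}$. Because $v_1v_2$ is internal, both $v_1$ and $v_2$ have degree at least~$2$ in $T$, so each $B_{v_i}$ contains $v_i$, its marked leaf ($x$ or $y$), and at least one further vertex. The pivot formula gives $N_D(x)\setminus\{y\}=\{v_1\}$ and $N_D(y)\setminus\{x\}=\{v_2\}$, so $(D\wedge xy)-\{x,y\}$ merges the two stars (minus the marked leaves) by joining their centers $v_1$ and $v_2$ with a new edge. The resulting bag is a ``double-star'' on at least four vertices with two distinct non-leaf vertices; it is therefore neither complete, nor a star, nor prime (the partition into the two halves is a split). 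The most delicate point of the argument is exactly this last step: one must exploit that $v_1v_2$ is \emph{internal} in $T$ to guarantee an extra leaf on each side, which is what distinguishes a double-star from a star.
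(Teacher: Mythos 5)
The paper offers no proof of this lemma at all: it is imported as a black box from Bouchet~\cite{Bouchet1988}, so there is no in-paper argument to compare against. Your proof is a correct, self-contained verification along the only natural route: exhibit the marked graph $D$, check every bag is a star (or a degenerate complete graph when $T$ has at most two vertices), check that recomposing any marked edge merges two stars of size at least three into a double-star, which is neither a star, nor complete, nor prime because the two halves form a split, and then invoke the uniqueness in Theorem~\ref{thm:CED}. You correctly isolate the one place where internality of the edge matters, namely that each side of the recomposed bag retains a leaf besides its centre. Two points of hygiene, neither a gap. First, your description of $B_w$ should be read as indexed by the \emph{non-leaf} vertices $w$ of $T$: applied literally to a leaf $w$ it would yield the singleton $\{w\}$, whereas a leaf in fact lies in the bag of its unique neighbour (and your count of the bags is otherwise right). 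Second, in step~(i) you certify that $D$ is a split decomposition by recomposing back to $T$, while the paper defines split decompositions by the forward inductive process of replacing a bag by a simple decomposition; your recompositions are exactly the reverses of simple decompositions along the splits $(X,Y)$ given by the two sides of each internal edge (both sides have at least two vertices precisely because the edge is internal), so the argument is sound, but it would be cleaner to run the induction in the forward direction rather than rely implicitly on the converse of the paper's remark that recomposition recovers $G$.
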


A bag of a split decomposition is a \emph{branching bag} if it is incident with at least three marked edges. 
Let $\mathcal{M}$ be the set of all distance-hereditary graphs in which each connected component admits a canonical split decomposition with the property that every branching bag is a complete bag.
We will prove that ${\cal C}$ is a subclass of ${\cal M}$, so every graph of ${\cal C}$ is a distance-hereditary graph with the additional property that  every branching bag in its canonical split decomposition is a complete bag.
In order to do this we need one more lemma.

\begin{lemma}[Bouchet~\cite{Bouchet1988}]\label{l-bouch}
A connected graph is distance-hereditary if and only if every bag of its canonical split decomposition is either a star or a complete graph.
\end{lemma}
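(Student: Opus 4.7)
The plan is to prove the two directions separately, using the recursive structure of the canonical split decomposition $D$ of $G$.

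For the backward direction, I would induct on the number of bags of $D$. If $D$ has a single bag then $G$ itself is a star or a complete graph, both of which are trivially distance-hereditary. Otherwise, pick a leaf bag $B$ of the decomposition tree, incident to a single marked edge $x_1 y_1$ with $x_1 \in V(B)$. Writing $X = V(B) \setminus \{x_1\}$, the pair $(X, V(G) \setminus X)$ is a split of $G$, and recomposing all marked edges of $D$ other than $x_1 y_1$ (after contracting the bag $B$ to a single marker) yields a connected graph $G'$ whose canonical split decomposition has one fewer bag, each still a star or complete graph. By induction $G'$ is distance-hereditary; $B$ is distance-hereditary by hypothesis; and I would verify as a routine exercise that the split-gluing of two distance-hereditary graphs is again distance-hereditary, by computing the distance between any two vertices in the glued graph from the distances in the two pieces.

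For the forward direction, I would proceed by contrapositive. Suppose some bag $B$ of $D$ is neither a star nor a complete graph. By the trichotomy of canonical bags, $B$ is prime with $\abs{V(B)} \geq 5$. Using the representative relation, each marked vertex $v$ of $B$ admits an unmarked representative $r(v) \in V(G)$ reached by an alternating path through $D$; selecting one such $r(v)$ for every marked vertex and keeping the unmarked vertices of $B$ as themselves produces a subset $S \subseteq V(G)$ with $G[S] \cong B$. Hence $G$ contains $B$ as an induced subgraph, and since distance-hereditariness is preserved under induced subgraphs, it suffices to show that $B$ itself is not distance-hereditary.

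To finish, I would show that no prime graph on at least five vertices is distance-hereditary. I would appeal to the Bandelt--Mulder theorem, by which every distance-hereditary graph on at least two vertices contains a pair of true or false twins or a pendant vertex. Twins $u,v$ in a graph of order at least four immediately yield the split $(\{u,v\}, V \setminus \{u,v\})$. A pendant $u$ with neighbour $v$ either shares $v$ with another pendant, producing twins and reducing to the previous case, or the structure around $v$ lets the closed neighbourhood $N[v]$ (or a suitable refinement of it) form one side of a split. The main obstacle I expect is this last case analysis for a lone pendant; if it proves delicate, I would instead invoke the forbidden-induced-subgraph characterization of distance-hereditary graphs (no induced $C_k$ for $k \geq 5$, no gem, no house, no domino) and verify directly that every prime graph on at least five vertices contains one of these configurations.
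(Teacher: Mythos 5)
The paper does not prove this lemma at all: it is imported verbatim from Bouchet's 1988 paper (and is also a consequence of the Cunningham--Edmonds/Bouchet theory of split decompositions), so there is no in-paper argument to compare against. Your proposal is a genuine proof attempt, and its outline is the standard one and is essentially correct. The backward direction by induction on the number of bags works, provided you phrase the induction hypothesis for arbitrary split decompositions whose bags are all stars or complete graphs (rather than canonical ones), since after deleting a leaf bag the remaining decomposition need not be canonical; the ``routine exercise'' that split composition preserves distance-hereditariness is indeed routine, via the distance formula $d_G(u,v)=d_{G_1}(u,x_1)+d_{G_2}(y_1,v)-1$ across a split, which is inherited by connected induced subgraphs. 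The forward direction is also sound: bags occur as induced subgraphs of $G$ via representatives, distance-hereditariness is hereditary, and a prime bag on at least five vertices cannot be distance-hereditary by Bandelt--Mulder. The one place where you anticipate delicacy is not actually delicate: if $u$ is a pendant vertex with neighbour $v$ in a graph on at least four vertices, then $(\{u,v\},V\setminus\{u,v\})$ is already a split, since $N(V\setminus\{u,v\})\subseteq\{v\}$ is trivially complete to $N(\{u,v\})\subseteq N(v)$; no refinement of $N[v]$ or reduction to the twin case is needed. Your fallback of checking directly that every prime graph on at least five vertices contains a house, hole, domino or gem would be considerably more work than the Bandelt--Mulder route and should not be needed.
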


\begin{lemma}\label{l-subset}
$\mathcal{C}$ is a subclass of~$\mathcal{M}$.
\end{lemma}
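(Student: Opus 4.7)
The plan is to compute the canonical split decomposition of each $H\in\mathcal{C}$ explicitly. Write $F$ for the underlying tree, $F'$ for its $1$-subdivision, and $s_e$ for the subdivision vertex on each edge $e$ of $F$. The strategy is to start from the canonical split decomposition of $F'$ given by Lemma~\ref{thm:bouchettree} and then use Lemma~\ref{lem:localdecom} iteratively, once for each branching vertex of $F$, in order to read off the bag structure of the canonical split decomposition of $H$ at the end.

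First, since every subdivision vertex of $F'$ has degree~$2$, the leaves of $F'$ coincide with those of $F$, and an edge of $F'$ is internal if and only if its non-subdivision endpoint is a non-leaf of $F$. Applying Lemma~\ref{thm:bouchettree} therefore produces a canonical split decomposition $D$ of $F'$ whose bags are: for each non-leaf $v\in V(F)$, a star $B_v$ centered at $v$ with one marker-satellite per edge $e\ni v$; and for each edge $e$ of $F$, a $3$-vertex star $B_{s_e}$ centered at $s_e$ whose two satellites are either the leaf endpoints of $e$ (when an endpoint is a leaf of $F$) or the markers facing the corresponding $B_v$ (when the endpoint is a non-leaf).

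Second, I would perform the local complementations prescribed by the definition of $\mathcal{C}$ one at a time on $D$. A direct case check shows that, for any non-leaf $v$ of $F$, the representatives of $v$ in $D$ are exactly $v$ itself and, for each edge $e\ni v$, the marker of $B_{s_e}$ on the $v$-side: any alternating marked/unmarked walk starting at $v$ reaches $B_{s_e}$ in two steps and is then blocked at the unmarked vertex $s_e$, which is incident only to unmarked edges. By Lemma~\ref{lem:localdecom}, local complementation at $v$ therefore modifies only $B_v$ and each $B_{s_e}$ with $e\ni v$; the latter are in fact unchanged because the representative is a leaf of the $3$-vertex star $B_{s_e}$, while $B_v$, being the star $K_{1,\deg_F(v)}$ centered at $v$, becomes the complete graph $K_{\deg_F(v)+1}$. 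Turning some $B_v$ into a clique does not create new representatives of any other non-leaf vertex, because the walk-blocking argument at every $s_e$ remains valid, so the same analysis applies at each successive branching vertex.

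Finally, once every branching vertex of $F$ has been processed, Lemma~\ref{lem:localdecom} guarantees that the resulting decomposition $D'$ is the canonical split decomposition of $H$. Every bag of $D'$ is either a star (each $B_{s_e}$ and each $B_v$ with $\deg_F(v)\le 2$) or a complete graph (each $B_v$ with $\deg_F(v)\ge 3$), so by Lemma~\ref{l-bouch} the graph $H$ is distance-hereditary. Since local complementation on the decomposition never touches marker edges, the branching bags of $D'$ are precisely the bags with at least three markers, namely the $B_v$ with $\deg_F(v)\ge 3$, and these are exactly the bags that were turned into complete graphs; hence $H\in\mathcal{M}$. The main delicate point is the characterisation of the representatives of a branching vertex within $D$ and the verification that this characterisation is preserved throughout the iterated local complementations; everything else reduces to routine appeals to Lemmas~\ref{lem:localdecom} and~\ref{l-bouch}.
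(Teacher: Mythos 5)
Your proposal is correct and follows essentially the same route as the paper: build the canonical split decomposition of the $1$-subdivision via Lemma~\ref{thm:bouchettree}, push the local complementations through the decomposition via Lemma~\ref{lem:localdecom}, and then read off that every bag is a star or a complete graph (giving distance-heredity via Lemma~\ref{l-bouch}) with every branching bag complete. The only difference is that you spell out in full the bag structure and the identification of representatives, which the paper leaves implicit under ``by construction''; your verification that the blocking argument at each subdivision vertex survives the successive complementations is the right detail to check.
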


\begin{proof}
Let $G\in {\cal C}$. 
By the definition of~$\mathcal{C}$, we find that $G$ is obtained from the $1$-subdivision~$T$ of some tree by performing local complementations at vertices of degree at least~$3$.   
By Lemma~\ref{thm:bouchettree}, we can first construct the canonical split decomposition of~$T$.
By Lemma~\ref{lem:localdecom} we can then perform local complementations at corresponding unmarked vertices in the canonical split decomposition of $T$ to obtain the canonical split decomposition of~$G$ (we refer again to \figurename~\ref{fig:decomposition} for an example).
By construction, every bag of the canonical split decomposition of $G$ is a star or a complete bag.
Hence, by Lemma~\ref{l-bouch}, we find that $G$ is a distance-hereditary graph.
From our construction we also note that every branching bag is complete. 
We conclude that $G\in {\cal M}$.
\end{proof}

We will now prove that for every tree $T$ that is not a caterpillar, ${\cal M}$ is $T$-pivot-minor-free.
In order to do this we first show that $\mathcal{M}$ is closed under taking pivot-minors.

\begin{lemma}\label{lem:closed}
The class $\mathcal{M}$ is closed under taking pivot-minors.
\end{lemma}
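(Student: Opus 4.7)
The plan is to verify closure of $\mathcal{M}$ under the two elementary operations that generate pivot-minors: edge pivots and vertex deletions. Fix $G\in\mathcal{M}$ and let $D$ be the canonical split decomposition of the connected component of $G$ in which the operation takes place; by hypothesis every bag of $D$ is a star or a complete graph (Lemma~\ref{l-bouch}) and every branching bag of $D$ is complete.

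For an edge pivot on $xy\in E(G)$, first note that the class of distance-hereditary graphs is closed under local complementation, and hence under pivots, so $G\wedge xy$ is again distance-hereditary. By Lemma~\ref{lem:pivotdecom}, $D\wedge xy$ is the canonical split decomposition of $G\wedge xy$. Recall that $D\wedge xy$ is obtained by pivoting only the unmarked edges of the (unique) alternating path linking $x$ and $y$; it thus alters the internal structure of a few bags, but leaves the decomposition tree and the marked-edge incidences of each bag untouched. A short direct calculation shows that pivoting an unmarked edge within a star bag produces another star bag (with the two endpoints of the pivoted edge swapping the roles of center and leaf), and pivoting any edge within a complete bag returns the same complete bag. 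It follows that each bag of $D\wedge xy$ is still a star or a complete bag, the set of branching bags is unchanged, and each such bag remains complete. Hence $G\wedge xy\in\mathcal{M}$.

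For vertex deletion, fix $v\in V(G)$ and let $B$ be the bag of $D$ containing $v$. The graph $G-v$ is distance-hereditary, so by Lemma~\ref{l-bouch} every bag of its canonical decomposition $D'$ is a star or a complete graph. I plan to construct $D'$ from $D$ by first deleting $v$ from $B$ and then iteratively \emph{canonicalizing} the result: absorbing a bag into a neighbour along a marked edge whenever the bag becomes trivial (fewer than three vertices), and recomposing along a marked edge joining two bags whose types violate the canonicity rules (two adjacent complete bags, or two adjacent star bags joined in a forbidden way). Each such step is local, affecting only bags in a bounded neighbourhood of $B$. A branching bag of $D'$ is therefore either (i) a branching bag of $D$ that survives unchanged, and hence was already complete by hypothesis, or (ii) a bag obtained by merging two or more adjacent bags via recomposition. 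For case (ii) I will argue, by a case analysis on the type of $B$ (star versus complete) and on the role of $v$ within $B$ (center or leaf, respectively an arbitrary vertex of a clique), that a cascade of mergers can yield a bag of marked-edge degree at least $3$ only when both bags participating in each merger were complete; since the recomposition of two complete bags along a marked edge yields a complete bag, the resulting branching bag is complete.

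The main obstacle is the vertex-deletion case: making the canonicalization procedure precise and checking every subcase of the merger analysis to confirm that no branching star bag can ever be created by deleting $v$. The edge-pivot case, by contrast, reduces immediately to Lemma~\ref{lem:pivotdecom} together with the observation that pivots within stars and complete bags preserve their types and do not disturb the decomposition tree.
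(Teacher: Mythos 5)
Your proposal follows essentially the same route as the paper: edge pivots are handled via Lemma~\ref{lem:pivotdecom} together with the observation that pivoting an unmarked edge inside a star (respectively complete) bag yields again a star (respectively complete) bag and leaves the decomposition tree untouched, and vertex deletion is handled by a local re-canonicalization around the bag $B$ containing $v$. The edge-pivot half is complete and correct.

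The vertex-deletion half, however, is a plan rather than a proof. The assertion that ``a cascade of mergers can yield a bag of marked-edge degree at least $3$ only when both bags participating in each merger were complete'' is precisely the claim that carries the lemma, and you defer its verification. The missing argument is short but it is the substance: in a graph of $\mathcal{M}$ every star bag is incident with at most \emph{two} marked edges (since branching bags are complete by definition of $\mathcal{M}$). Deleting $v$ forces a merger only when $\abs{V(B)}=3$; then $B-v$ is absorbed into a neighbour bag without changing that bag's type, and at most one further merger can occur, namely between the two former neighbour bags $B_1$ and $B_2$ of $B$. If both are stars, each had at most two marked edges, one of which led to $B$, so the merged star bag again has at most two marked edges and is not branching; if both are complete, the merged bag is complete. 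In particular there is no ``cascade'': the perturbation is confined to $B$ and its at most two relevant neighbours, which your sketch leaves open. You also omit the case where $v$ is the centre of a star bag: there $G-v$ is disconnected, and one argues componentwise, each component inheriting a decomposition obtained from a component of $D-V(B)$ by deleting one leaf of a star bag or one vertex of a complete bag. None of these steps would fail, but until they are written out the closure under vertex deletion is not established.
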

\begin{proof}
	Let $G\in \mathcal{M}$. 
	It is sufficient to show the following:
	\begin{enumerate}[(1)]
	\item for $v\in V(G)$, $G-v$ is in $\mathcal{M}$, 
	\item for an edge $wz\in E(G)$, $G\pivot wz$ is in $\mathcal{M}$.
	\end{enumerate}
	We may assume that $G$ is connected. Let $D$ be the canonical split decomposition of $G$.
	
	We first show (2). Let $wz$ be an edge of $G$.
	By Lemma~\ref{lem:pivotdecom},
	$D\wedge wz$ is the canonical split decomposition of $G\wedge wz$.
	Let~$P$ be the path in~$D$ linking~$w$ and~$z$ where unmarked edges and marked edges appear alternately.
	By the definition of pivoting in a canonical split decomposition, 
	 we obtain $D\wedge wz$ from $D$ as follows: for each bag~$B$ containing an unmarked edge~$xy$ of~$P$, we replace~$B$ with $B\wedge xy$. 
	 It is easy to observe that if $B$ is a complete bag, then $B\wedge xy$ is again a complete bag, 
	 and if $B$ is a star bag, then $B\wedge xy$ is again a star bag.
	Therefore, $G\wedge wz$ is again contained in $\mathcal{M}$.

	It remains to prove (1). Suppose that $v\in V(G)$ and let $B$ be the bag containing $v$.
	We may assume that $D$ has at least two bags, otherwise the statement follows immediately.

\medskip
\noindent
{\bf Case 1.} $B$ is a complete bag.\\
	If $\abs{V(B)}\ge 4$, then after removing $v$ in $G$,
we find that $B$
	 is still a complete bag of size at least $3$.
	So, $D-v$ is a canonical split decomposition of $G-v$.
	If $\abs{V(B)}=3$, then $B-v$ is merged with one of the neighbour bags of $B$.
	This process does not change the type of the neighbour bag. 
	It is possible that the two neighbour bags $B_1$ and $B_2$ of $B$ in $D$ are star bags, 
	and they can be merged after $B-v$ is merged with a neighbour bag. 
	In this case, each of $B_1$ and $B_2$ has at most two neighbour bags, 
	and after merging $B_1$ and $B_2$, it is again a star bag that has at most two neighbour bags.
	Thus, $G-v$ is in $\mathcal{M}$ again.
	
\medskip
\noindent
{\bf Case 2.} $B$ is a star bag and $v$ is a leaf of $B$.\\
	If $\abs{V(B)}\geq 4$, then $D-v$ is the canonical split decomposition of $G-v$.
	If $\abs{V(B)}=3$, then $B-v$ is merged with one of the neighbour bags of $B$.
	This process does not change the type of the neighbour bag.
	There might be two cases where the two neighbour bags $B_1$ and $B_2$ of $B$ in $D$  
	are merged after $B-v$ is merged with a neighbour bag. 
	If $B_1$ and $B_2$ are complete bags, then the merged bag becomes a complete bag.
	If $B_1$ and $B_2$ are star bags, then each of them has at most two neighbour bags in $D$, 
	and after merging, the new bag is again a star bag that has at most two neighbour bags.
	So, $G-v$ is in $\mathcal{M}$.

\medskip
\noindent
{\bf Case 3.} $B$ is star bag and $v$ is the center of $B$.\\
	Then $v$ is a cut vertex of $G$; that is, $G-v$ is disconnected.
	Furthermore, each component of $G-v$ either consists of a single vertex, or it admits a split decomposition obtained from a connected component of 
	$D-V(B)$ by removing a leaf of a star bag or a vertex in a complete bag. Thus, each component of $G-v$ is in $\mathcal{M}$, 
	and thus, $G-v$ is also in $\mathcal{M}$.
	This concludes the proof of the lemma.
\end{proof}

We also need a known characterization of graphs of linear rank-width at most~$1$ in terms of their canonical split decompositions.

\begin{lemma}[Kant\'e and Kwon~{\cite{KanteK2018}}]\label{thm:charlrw1split}
Let $G$ be a connected graph with canonical split decomposition~$D$.
Then $G$ has linear rank-width at most~$1$  if and only if~$G$ is distance-hereditary and 
the decomposition tree of  $D$ is a path.
\end{lemma}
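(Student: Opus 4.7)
The statement is an ``if and only if'', so I would prove both directions separately.

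For the sufficient direction, assume $G$ is distance-hereditary and its canonical split decomposition $D$ has decomposition tree equal to a path $B_1 \text{--} B_2 \text{--} \cdots \text{--} B_k$. By Lemma~\ref{l-bouch} each $B_i$ is either a star or a complete graph. I would build a linear ordering of $V(G)$ by concatenating orderings of the unmarked vertices bag by bag in the order $B_1, B_2, \ldots, B_k$, choosing the internal order within each star bag so that the centre (if it is unmarked) is placed adjacent to the side where its marked leaves lie. Then at any cut in this linear ordering, the partition $(V_1, V_2)$ of $V(G)$ is either (a) a \emph{bag-boundary} cut, in which case $(V_1, V_2)$ corresponds to deleting a single marked edge of $D$ and linking through the split decomposition shows $A_G[V_1, V_2]$ is an outer product of two indicator vectors, hence has rank at most $1$; or (b) an \emph{internal} cut inside some bag $B_i$, in which case one checks directly, using the bag type and the chosen ordering, that $A_G[V_1, V_2]$ still has rank at most $1$ (for a complete bag every such cut matrix is an all-ones block; for a star bag with the centre placed correctly the cut still yields a rank-$1$ matrix).

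For the necessary direction, I would first note that $\lrw(G)\le 1$ implies $\rw(G)\le 1$, and the graphs of rank-width at most $1$ are exactly the distance-hereditary graphs (Oum), so $G$ is distance-hereditary and by Lemma~\ref{l-bouch} every bag of $D$ is a star or a complete graph. For the path property, I would argue the contrapositive: assuming the decomposition tree contains a bag $B$ of degree at least $3$, I would exhibit a vertex-minor of $G$ isomorphic to the tree $S_{2,2,2}$. Then since $S_{2,2,2}$ is not a caterpillar, Lemma~\ref{lem:charlrw1tree} gives $\lrw(S_{2,2,2}) \ge 2$, and Lemma~\ref{lem:bouchet} then yields $\lrw(G) \ge 2$. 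To extract $S_{2,2,2}$, I would take three distinct ``branches'' of the decomposition tree at $B$, and in each branch iteratively apply local complementations at unmarked vertices and delete vertices; by Lemma~\ref{lem:localdecom} each such operation acts on the canonical split decomposition in a controlled way. I would reduce each branch to a path of length $2$ attached to $B$ via a single marked edge, simultaneously reducing $B$ itself so that it shares its unmarked vertices with exactly these three marked edges. The resulting canonical split decomposition, after recomposing, would be $S_{2,2,2}$ (possibly after one final local complementation, depending on whether $B$ is a star or complete bag).

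The main technical obstacle will be the forward direction, and specifically carrying out the reduction to $S_{2,2,2}$ cleanly across the two bag types. Each branch may contain star and complete bags in arbitrary arrangements, and one must show that suitable local complementations and vertex deletions always reduce it to a minimal configuration of length two. Additionally, whether the centre of a star bag is marked or unmarked, and whether the three marked edges at $B$ incide with the centre or with leaves, yields several cases that all must be reduced to the same final vertex-minor $S_{2,2,2}$. The sufficient direction, by contrast, is essentially a verification: the hard work has been done once one chooses the bag-by-bag ordering and checks the rank-$1$ conditions for the two types of cuts.
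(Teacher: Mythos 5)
This lemma is quoted from Kant\'e and Kwon~\cite{KanteK2018}; the paper gives no proof of it, so there is nothing internal to compare your argument against, and I have assessed it on its own merits.

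Your sufficient direction is essentially workable (the bag-by-bag ordering is the right idea), although the phrase ``an all-ones block'' for internal cuts of a complete bag is not literally what happens: the rows of $A_G[V_1,V_2]$ indexed by vertices of \emph{other} bags also cross such a cut, and one must argue that every nonzero row equals the fixed set $N_G(X)\cap V_2$ coming from the split at the adjacent marked edge (using that the unmarked vertices of a complete bag are pairwise twins, and the analogous statement for star bags). This is fiddly but fixable.

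The genuine gap is in your necessary direction. Your plan is to show that a branching bag forces a vertex-minor isomorphic to $S_{2,2,2}$, but this is false: a graph whose decomposition tree has a degree-$3$ bag need not have $7$ vertices at all, and a vertex-minor never has more vertices than the host graph. A concrete counterexample to your strategy is the net (a triangle $t_1t_2t_3$ with a pendant vertex $p_i$ attached to each $t_i$): it is distance-hereditary, its canonical split decomposition consists of a central complete bag on three marked vertices with three star leaf-bags, so its decomposition tree is $K_{1,3}$, and its linear rank-width is indeed $2$ --- yet it has only $6$ vertices, so $S_{2,2,2}$ cannot be extracted from it by local complementations and deletions. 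The correct obstruction set for linear rank-width at most $1$ (Adler, Farley and Proskurowski~\cite{AFP13}, cited in Section~\ref{s-intro}) contains several small graphs besides $S_{2,2,2}$, the net among them. To repair the direction you would either have to invoke that full obstruction list, or (as Kant\'e and Kwon do in the cited source) argue directly on the decomposition: show that for any linear ordering of $V(G)$, a branching bag forces some prefix whose two sides each meet at least two of the three branches in a way that produces two linearly independent rows in the cut matrix. As it stands, the reduction ``each branch becomes a path of length $2$'' cannot be carried out in general.
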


We are now ready to prove Theorem~\ref{t-main1}.

\medskip
\noindent
{\bf Theorem~\ref{t-main1} (restated).}
{\it If~$T$ is a tree that is not a caterpillar, then the class of $T$-pivot-minor-free distance-hereditary graphs has unbounded linear rank-width.}

\begin{proof}
Let~$T$ be a tree that is not a caterpillar. As ${\cal C}\subseteq {\cal M}$ by Lemma~\ref{l-subset} and ${\cal C}$ has unbounded linear rank-width by Lemma~\ref{l-c}, it follows that ${\cal M}$ has unbounded linear rank-width. Moreover, ${\cal M}$ is a subclass of the class of distance-hereditary graphs.
Hence, to prove the theorem it remains to show that every graph in ${\cal M}$ is $T$-pivot-minor-free. 

Suppose, for contradiction, that~$T$ is a pivot-minor of some graph~$H\in \mathcal{M}$.
As $\mathcal{M}$ is closed under pivot-minors by Lemma~\ref{lem:closed}, we find that
$T\in \mathcal{M}$.
Let $L_T$ be a canonical split decomposition of $T$.
Since $T$ is a tree, $L_T$ has no complete bags.
So, by the definition of $\mathcal{M}$, $L_T$ has no branching bags, and thus, the decomposition tree of $L_T$ is a path.
Since $T$ is distance-hereditary 
and the decomposition tree of $L_T$ is a path, 
$T$ has linear rank-width at most $1$ by Lemma~\ref{thm:charlrw1split}.
Thus, by Lemma~\ref{lem:charlrw1tree}, $T$ is a caterpillar, a contradiction.
\end{proof}

\section{The Proof of Theorem~\ref{thm:pmdh}}\label{pndh}

In this section, we prove Theorem~\ref{thm:pmdh}, which states that $P_n$-pivot-minor-free distance-hereditary graphs have bounded linear rank-width.

To prove Theorem~\ref{thm:pmdh}, we use the canonical split decomposition of a distance-hereditary graph, discussed in Section~\ref{sec:caterpillar}.
A sequence $B_1, B_2, \ldots, B_n$ of distinct bags in a canonical split decomposition is a \emph{path of bags} if for each $i\in \{1, \ldots, n-1\}$, $B_{i+1}$ is a neighbour bag of $B_i$.
As we will explain in the proof of Theorem~\ref{thm:pmdh}, it follows from the definition of a canonical split decomposition, that 
at least half of the bags in a path of bags are star bags.
By applying some pivot operations, 
we can extract a long path as a pivot-minor in this case.
So, we may assume that the decomposition tree of the canonical split decomposition of a given graph has no long path.
We use the following result which relates the path-width of a decomposition tree and the linear rank-width of the graph to conclude the theorem.

\begin{proposition}[Kant\'e and Kwon~{\cite{KanteK2018}}]\label{Kantek2018}
Let $D$ be the canonical split decomposition of a connected distance-hereditary graph $G$, and let~$T_D$ be the decomposition tree of $D$. Then $\frac{1}{2}\pw(T_D)\le \lrw(G) \le \pw(T_D) + 1$.
\end{proposition}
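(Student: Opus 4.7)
The proposition has two inequalities, which I would prove by constructive transformations between linear orderings of $V(G)$ and path decompositions of $T_D$. The heart of both directions is a pair of cut-rank lemmas relating $\cutrk_G(X)$ to combinatorial data of $D$; these are tractable precisely because a canonical split decomposition of a distance-hereditary graph uses only star and clique bags (Lemma~\ref{l-bouch}), each of which contributes rank at most $1$ across any cut, and because cut-rank is invariant under local complementation (Lemma~\ref{lem:bouchet}), which normalises adjacencies across chains of marked edges.

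For the upper bound $\lrw(G)\le \pw(T_D)+1$, I would fix an optimal path decomposition $(P,(B_t)_{t\in V(P)})$ of $T_D$ of width $k=\pw(T_D)$, with the nodes of $P$ enumerated $t_1,\ldots,t_m$ along the path, and define $\tau(B):=\min\{i:B\in B_{t_i}\}$ for each bag $B\in V(T_D)$. Sort the bags of $D$ by $\tau$ (breaking ties arbitrarily), list the unmarked vertices of each bag consecutively inside it, and call the resulting linear ordering $L=(v_1,\ldots,v_n)$ of $V(G)$. For any prefix $X_i=\{v_1,\ldots,v_i\}$, the bags of $D$ that are \emph{split} by $X_i$ (those containing unmarked vertices both in $X_i$ and outside it) all lie in a common bag $B_{t_j}$ of the path decomposition, so there are at most $k+1$ of them. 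The core lemma I would then prove, by induction on $|V(T_D)|$ peeling off a leaf bag, is that $\cutrk_G(X)$ is bounded by the number of bags split by $X$: a non-split leaf bag can be removed without altering the cut matrix (after a local complementation to absorb its marked edge), and a split leaf bag contributes rank at most $1$ since all of its vertices on one side of the cut attach to the rest of $G$ through the unique marked edge leaving the bag. This yields $\cutrk_G(X_i)\le k+1$ for every $i$, hence $\lrw(G)\le k+1$.

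For the lower bound $\frac12\pw(T_D)\le \lrw(G)$, I would fix an optimal ordering $L=(v_1,\ldots,v_n)$ of $V(G)$ of width $\ell=\lrw(G)$. For each $i$, let $F_i$ be the set of marked edges $e$ of $D$ such that removing $e$ from $D$ separates the unmarked vertices of $X_i$ from those of $V(G)\setminus X_i$, and let $B_i$ be the set of bags of $D$ incident with some edge of $F_i$. The companion technical lemma I would need is $|F_i|\le \cutrk_G(X_i)\le \ell$, proved again using the star/clique structure of bags: each crossing marked edge yields one linearly independent column of $A_G[X_i,V(G)\setminus X_i]$, after pivoting along the alternating paths of $D$ to expose the adjacencies across that edge. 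Hence $|B_i|\le 2\ell$ for every $i$. I would then verify that $(B_i)_{i=1}^n$, together with the obvious path on the indices, is a valid path decomposition of $T_D$: every bag of $D$ appears in some $B_i$ because $T_D$ is connected and unmarked vertices eventually migrate from one side of the cut to the other; every edge of $T_D$ is covered because each marked edge of $D$ crosses the cut for a nonempty interval of indices; and the interval property holds because once a bag ceases to be incident with any crossing marked edge, its unmarked vertices have committed to one side of the cut and cannot become active again. Therefore $\pw(T_D)\le 2\ell-1$, giving $\frac12\pw(T_D)\le \lrw(G)$. The main obstacle in the whole plan is establishing the two cut-rank lemmas rigorously; everything else is bookkeeping about how split decompositions interact with linear orderings.
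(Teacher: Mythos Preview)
The paper does not prove this proposition at all; it is quoted from Kant\'e and Kwon~\cite{KanteK2018} and used as a black box in the proof of Theorem~\ref{thm:pmdh}. So there is nothing in the paper to compare your argument against.

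That said, your sketch has a genuine gap in each direction. For the upper bound, the core lemma ``$\cutrk_G(X)$ is at most the number of bags split by $X$'' is false. With your ordering (bags listed consecutively by $\tau$), every prefix $X_i$ splits at most one bag, yet the cut-rank need not be at most~$1$: already for $G=P_4$ with its two-bag canonical decomposition, the prefix $X=\{v_1,v_2\}$ splits no bag but has cut-rank~$1$. Your induction also breaks: recomposing a non-split leaf bag lying in $X$ into a non-split parent lying outside $X$ produces a split parent, so the count of split bags is not monotone under your peeling. What actually controls $\cutrk_G(X)$ is the number of bags on the \emph{boundary} of $X$ in $T_D$---bags that are split \emph{or} lie entirely on one side but are $T_D$-adjacent to a bag on the other side---and it is this quantity that the path-width bounds. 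Kant\'e and Kwon handle this through their ``limb'' machinery rather than a bare split-bag count.

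For the lower bound, your set $F_i$ as defined (marked edges whose removal cleanly separates $X_i$ from its complement) is generally empty for prefixes of an arbitrary optimal ordering of $G$, since nothing forces $X_i$ to occupy a subtree of $T_D$; then $(B_i)_i$ fails to cover $V(T_D)$ and is not a path decomposition. Even when $F_i$ is non-empty, the inequality goes the wrong way: for $X$ equal to the unmarked vertices of a subtree $S$, one has $\cutrk_G(X)\le |F_i|$ (each boundary marked edge contributes rank at most~$1$), not $|F_i|\le\cutrk_G(X)$. The workable definition is the complementary one---marked edges $e$ such that \emph{both} components of $D-e$ meet both $X_i$ and its complement---and proving that \emph{that} set has size at most $\cutrk_G(X_i)$ is where the actual linear-algebra content of the Kant\'e--Kwon proof lives.
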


We also use a tight version of Theorem~\ref{t-rs83}. We will use the fact that if a graph contains a minor isomorphic to $P_n$, 
then it also contains a subgraph isomorphic to $P_n$.

\begin{theorem}[Bienstock, Robertson, Seymour, and Thomas~\cite{BienstockRST1991}]\label{t-brst91}
For every tree~$T$ on $n$ vertices, the class of $T$-minor-free graphs has path-width at most $n-2$.
\end{theorem}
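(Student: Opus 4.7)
I would prove the contrapositive by induction on $n = \abs{V(T)}$: every graph $G$ with $\pw(G) \geq n - 1$ contains $T$ as a minor. The cases $n \leq 2$ are immediate (any non-empty graph contains $K_1$; and $\pw(G) \geq 1$ forces the presence of at least one edge, hence $K_2$), so suppose $n \geq 3$.

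Pick a leaf $v$ of $T$ with unique neighbour $u$, and set $T' := T - v$, a tree on $n - 1$ vertices. Rather than proving the naked statement directly, I would carry through the induction with the following rooted strengthening: for every rooted tree $(T', u)$ on $n - 1$ vertices and every graph $G$ with $\pw(G) \geq n - 1$, there exist a minor model $\mu$ of $T'$ in $G$ and a vertex $y \in V(G) \setminus \bigcup_{w \in V(T')} \mu(w)$ adjacent in $G$ to some vertex of $\mu(u)$. Augmenting such a model by setting $\mu(v) := \{y\}$ then yields a minor model of $T$ in $G$, so the theorem follows from the rooted statement.

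To establish the rooted strengthening, fix an optimal path-decomposition $(B_1, \ldots, B_p)$ of $G$. Since the prefix path-widths $\pw(G[B_1 \cup \cdots \cup B_i])$ change by at most one as $i$ grows from $0$ to $p$, there is a smallest index $i$ for which the prefix $H_i := G[B_1 \cup \cdots \cup B_i]$ has path-width at least $n - 2$. Apply the inductive hypothesis (in its rooted form) to $H_i$ and $(T', u)$ to obtain a minor model $\mu$ of $T'$ inside $H_i$; by minimality of $i$, the layer $B_{i+1} \setminus B_i$ contains a vertex $y$ adjacent to $B_i$, and this $y$ plays the role of the branch set for $v$ as soon as one arranges that $\mu(u) \cap B_i \neq \emptyset$.

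The main obstacle is precisely this last arrangement: steering the inductively produced model so that the branch set of the designated root $u$ meets the right-hand interface $B_i$. I would handle it by an inner induction on the eccentricity of $u$ in $T'$, re-rooting $T'$ at a more central vertex and then propagating the branch set of $u$ outward along the $T'$-path from that vertex to $u$, absorbing the extra slack in path-width one step at a time. This interface-steering step is the delicate technical heart of the Bienstock--Robertson--Seymour--Thomas argument; the remaining details are routine bookkeeping, together with a tightness check (for instance, by exhibiting the complete graph $K_{n-1}$, which has path-width $n-2$ and no $T$-minor when $T$ is a path on $n$ vertices) to verify that the bound $n - 2$ cannot be improved.
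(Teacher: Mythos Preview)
The paper does not prove this statement; Theorem~\ref{t-brst91} is quoted from Bienstock, Robertson, Seymour, and Thomas~\cite{BienstockRST1991} and used as a black box in the proof of Theorem~\ref{thm:pmdh}. There is therefore no ``paper's own proof'' to compare against.

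As for your sketch itself: the overall shape---contrapositive, induction on $\abs{V(T)}$, strip a leaf, find a prefix of an optimal path-decomposition where the induction kicks in, and attach the leaf via a vertex in the next bag---is indeed the strategy behind the short proofs of this result (e.g.\ Diestel's simplification of~\cite{BienstockRST1991}). However, your indexing does not line up: your rooted strengthening for a tree on $m$ vertices demands $\pw(G)\ge m$, yet you then apply it to $T'$ on $n-1$ vertices inside a prefix $H_i$ with $\pw(H_i)\ge n-2$, which is one short of what your own hypothesis requires. You also acknowledge but do not carry out the genuinely hard part, namely forcing the branch set of the root $u$ to meet the interface bag $B_i$; ``an inner induction on the eccentricity of $u$'' is not how the published proofs do this, and it is not clear it can be made to work as stated. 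In the original argument and in Diestel's version, the correct strengthening is formulated so that the root's branch set is guaranteed to contain a prescribed vertex of the separator, and the induction step uses the structure of the path-decomposition (in particular, a carefully chosen separator of size $n-1$) rather than an auxiliary induction on eccentricity. So the plan is in the right spirit, but as written it has both an off-by-one inconsistency and a gap at exactly the point you flag as delicate.
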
 

We are now ready to prove Theorem~\ref{thm:pmdh}.

\medskip
\noindent
{\bf Theorem~\ref{thm:pmdh} (restated).}
{\it Let $n\ge 3$ be an integer. Every $P_n$-pivot-minor-free distance-hereditary graph has linear rank-width at most $2n-5$.}

\begin{proof}
Let $G$ be a distance-hereditary graph having no pivot-minor isomorphic to $P_n$.
We will show that $G$ has linear rank-width at most $2n-5$.
We may assume that $G$ is connected.
Let~$D$ be the canonical split decomposition of $G$ and let~$T_D$ be its decomposition tree.

We claim that $D$ has no path with $2n-4$ bags.
Suppose that such a path of bags $B_1, B_2, \ldots, B_{2n-4}$ exists. As no two complete bags are neighbour bags in a canonical split decomposition, 
at most $n-2$ bags in the sequence are complete bags.
Thus, there are at least $n-2$ star bags.
Let $B_{i_1}, B_{i_2}, \ldots, B_{i_t}$ be the sequence of all star bags in $B_1, B_2, \ldots, B_{2n-4}$ where
$1\le i_1<i_2< \cdots <i_t\le 2n-4$.
For convenience, we assign $B_0=B_{2n-3}=\emptyset$.

We claim that for 
every
$k \in \{1,\ldots,t\}$, there is a graph $G_k$ pivot-equivalent to $G$ with a canonical split decomposition $D_k$
such that 
\begin{itemize}
	\item[($\ast$)] for every $k' \in \{1,\ldots,k\}$, the bag $D_k[V(B_{i_{k'}})]$ is a star bag whose center has no neighbour in $V(B_{i_{k'}-1})\cup V(B_{i_{k'}+1})$.
\end{itemize}
For $k=1$, suppose that $D$ does not satisfy the property $(\ast)$. 
Choose a vertex $v'$ in $B_{i_1}$ that has no neighbour in $V(B_{i_{1}-1})\cup V(B_{i_{1}+1})$. Such a vertex exists, as each bag has at least three vertices.
Let $v$ be the vertex of $G$ represented by $v'$ 
(recall that for every marked vertex, there is a vertex of $G$ that
 it represents).
We further choose a vertex $w$ of $G$ represented by the center $w'$ of $B_{i_1}$.
Since $v'w'$ is an edge of $B_{i_1}$, $v$ is linked to $w$ in $D$, and therefore, $v$ is adjacent to $w$ in $G$. 

Note that for a star $H$ with a leaf $b$ and center $a$, $H\pivot ab$ is the star 
with vertex set~$V(H)$ whose center is~$b$.
Thus, in $D\pivot vw$, $v'$ becomes the center of $B_{i_1}\pivot v'w'$, which has no neighbour in $V(B_{i_{1}-1})\cup V(B_{i_{1}+1})$.
Thus, $G_1=G\pivot vw$ and $D_1=D\pivot vw$ satisfy $(\ast)$.

Now, assume that $k>1$ and the property ($\ast$) is satisfied for $k-1$.
If the center of $D_{k-1}[V(B_{i_k})]$ has no neighbour in $V(B_{i_{k}-1})\cup V(B_{i_{k}+1})$, 
then $G_k=G_{k-1}$ and $D_k=D_{k-1}$ satisfy $(\ast)$.
So, we may assume that 
the center of $D_{k-1}[V(B_{i_k})]$ has a neighbour in $V(B_{i_{k}-1})\cup V(B_{i_{k}+1})$.
We distinguish two cases.

\medskip
\noindent
\textbf{Case 1.} The center of $D_{k-1}[V(B_{i_k})]$ has a neighbour in $V(B_{i_{k}-1})$.

Note that if $D_{k-1}[V(B_{i_{k}-1})]$ is a star bag, then by the inductive hypothesis, 
its center has no neighbour in $V(B_{i_k})$. But this is not possible by the definition of a canonical split decomposition.
Thus, $D_{k-1}[V(B_{i_{k}-1})]$ is a complete bag. We choose a vertex $v$ in $G_{k-1}$ represented by a vertex in $D_{k-1}[V(B_{i_{k}-1})]$ having no neighbour in $V(B_{i_{k}-2})\cup V(B_{i_{k}})$
and choose a vertex $w$ in $G_{k-1}$ represented by a vertex in $D_{k-1}[V(B_{i_k})]$ having no neighbour in $V(B_{i_{k}-1})\cup V(B_{i_{k}+1})$.
Observe that $v$ is adjacent to $w$ in $G_{k-1}$, because the center of $D_{k-1}[V(B_{i_k})]$ has a neighbour in $V(B_{i_k-1})$.
Thus, in $D_{k-1}\pivot vw$, the bag induced by $V(B_{i_k})$ is a star bag whose center has no neighbour in $V(B_{i_{k}-1})\cup V(B_{i_{k}+1})$.
As the bags on $V(B_{i_1}), \ldots, V(B_{i_{k-1}})$ are not changed by this pivot operation, $G_k=G_{k-1}\pivot vw$ and $D_k=D_{k-1}\pivot vw$ satisfy $(\ast)$.

\medskip
\noindent
\textbf{Case 2.} The center of $D_{k-1}[V(B_{i_k})]$ has a neighbour in $V(B_{i_{k}+1})$.

We choose a vertex $v$ in $G_{k-1}$ represented by a vertex in $D_{k-1}[V(B_{i_{k}+1})]$ having no neighbour in $V(B_{i_{k}})\cup V(B_{i_{k}+2})$
and choose a vertex $w$ in $G_{k-1}$ represented by a vertex in $D_{k-1}[V(B_{i_k})]$ having no neighbour in $V(B_{i_{k}-1})\cup V(B_{i_{k}+1})$ and linked to $v$ in $D_{k-1}$.
Such a vertex $w$ exists, because  the center of $D_{k-1}[V(B_{i_k})]$ has a neighbour in $V(B_{i_{k}+1})$, 
and thus, if $D_{k-1}[V(B_{i_{k}+1})]$ is a star, then its center has a neighbour in $V(B_{i_k})$.
This implies that $v$ is adjacent to $w$ in $G_{k-1}$.
In $D_{k-1}\pivot vw$, the bag induced by $V(B_{i_k})$ is a star bag whose center has no neighbour in $V(B_{i_{k}-1})\cup V(B_{i_{k}+1})$.
As the bags on $V(B_{i_1}), \ldots, V(B_{i_{k-1}})$ are not changed by this pivot operation, 
$G_k=G_{k-1}\pivot vw$ and $D_k=D_{k-1}\pivot vw$ satisfy $(\ast)$.

\medskip
\noindent
Hence, we have found that the claim holds.

\medskip
\noindent
Now, in $D_t$, let $v_j$ be a vertex of $G_t$ represented by the center of $D_t[V(B_{i_j})]$ for each $j \in \{1,\ldots,t\}$,
and let $v_0$ be a vertex of $G_t$ represented by a leaf of $D_t[V(B_{i_1})]$ which has no neighbour in $V(B_{i_1+1})$, 
and let $v_{t+1}$ be a vertex of $G_t$ represented by a leaf of $D_t[V(B_{i_t})]$ which has no neighbour in $V(B_{i_t-1})$.
It is not difficult to check that $v_0v_1v_2 \cdots v_tv_{t+1}$ is an induced path of $G_t$ on $t+2\ge n$ vertices.
This contradicts the assumption that $G$ has no pivot-minor isomorphic to $P_n$.
We conclude that $D$ has no path with $2n-4$ bags $B_1, B_2, \ldots, B_{2n-4}$.

The above means that the decomposition tree $T_D$ has no path on $2n-4$ vertices. By Theorem~\ref{t-brst91},  
$T_D$ has path-width at most $2n-6$. By Proposition~\ref{Kantek2018}, $G$ has linear rank-width at most $2n-5$.
\end{proof}

\section{The Proof of Theorem~\ref{t-maintwosubgraphs}}\label{claw}

In this section, we prove Theorem~\ref{t-maintwosubgraphs}, which states that the class of $(K_3,S_{1,2,2})$-free graphs has linear rank-width at most~$58$. We prove the following statements in this order:

\begin{enumerate}
\item bipartite $2P_2$-free graphs, which form a subclass of bipartite $(P_1+2P_2)$-free graphs, have linear rank-width at most~$1$;
\item bipartite $(P_1+2P_2)$-free graphs, which form a subclass of bipartite $S_{1,2,2}$-free graphs, have linear rank-width at most~$3$;
\item bipartite $S_{1,2,2}$-free graphs have linear rank-width at most~$3$;
\item non-bipartite $(K_3,C_5,S_{1,2,2})$-free graphs have linear rank-width at most~$3$; and
\item  $(K_3,S_{1,2,2})$-free graphs with an induced $C_5$ have linear rank-width at most~$58$.
\end{enumerate}

Note that Statements 3--5 cover all cases for proving Theorem~\ref{t-maintwosubgraphs}.	
So, we first consider bipartite $2P_2$-free graphs.

	\begin{lemma}\label{lem:bipartitechain}
	Every bipartite $2P_2$-free graph has linear rank-width at most~$1$.
	\end{lemma}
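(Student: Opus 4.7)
The plan is to exhibit a linear ordering explicitly and verify that every cut has rank at most~$1$. The starting point is the classical characterization of bipartite $2P_2$-free graphs as \emph{chain graphs}: for a bipartite graph with bipartition $(A,B)$, being $2P_2$-free is equivalent to $\{N_G(a):a\in A\}$ being totally ordered by inclusion, which follows directly from the definition of $2P_2$ (if $N_G(a_1)$ and $N_G(a_2)$ are incomparable, pick $b_1\in N_G(a_1)\setminus N_G(a_2)$ and $b_2\in N_G(a_2)\setminus N_G(a_1)$ and the edges $a_1b_1,a_2b_2$ form an induced $2P_2$). Since isolated vertices can be appended at the end of an ordering without affecting any non-trivial cut-rank, and since orderings of connected components can be concatenated (cuts between components have rank~$0$), it suffices to handle a connected graph $G$ on at least two vertices with bipartition $(A,B)$; then, ordering $A=\{a_1,\ldots,a_n\}$ so that $N_G(a_1)\supseteq N_G(a_2)\supseteq\cdots\supseteq N_G(a_n)$, connectedness forces $N_G(a_1)=B$.

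Setting $S_i:=N_G(a_i)\setminus N_G(a_{i+1})$ for $i=1,\ldots,n$ with $N_G(a_{n+1}):=\emptyset$, the chain property guarantees that $S_1,\ldots,S_n$ partition $B$ and that every $b\in S_i$ satisfies $N_G(b)=\{a_1,\ldots,a_i\}$. I would then use the ``peeling'' ordering
\[L := a_1,\; S_1,\; a_2,\; S_2,\; \ldots,\; a_n,\; S_n,\]
with the elements of each block $S_i$ listed in an arbitrary internal order and empty blocks simply omitted.

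To bound the width of $L$, I would consider an arbitrary prefix of the form $P=\{a_1,\ldots,a_k\}\cup S_1\cup\cdots\cup S_{k-1}\cup T$ with $T\subseteq S_k$ and inspect the cut matrix $M:=A_G[P,V(G)\setminus P]$. Every $b\in P\cap B$ belongs to some $S_i$ with $i\leq k$, so $N_G(b)=\{a_1,\ldots,a_i\}\subseteq P$ and the row of $b$ in $M$ is zero. For each $a_i\in P$ with $i\leq k$ we have $N_G(a_i)=S_i\cup\cdots\cup S_n$, which gives
\[N_G(a_i)\cap(V(G)\setminus P) \;=\; (S_k\setminus T)\cup S_{k+1}\cup\cdots\cup S_n,\]
a set that does not depend on $i$ for $i\leq k$; hence the rows of $a_1,\ldots,a_k$ in $M$ are all identical. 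Therefore $M$ has at most one distinct non-zero row, so $\rank M\leq 1$ over the binary field, and we conclude $\lrw(G)\leq 1$. The only non-routine step is the chain characterization; everything else is straightforward bookkeeping using the nested neighbourhoods.
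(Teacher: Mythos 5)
Your proof is correct and takes essentially the same approach as the paper's: both rely on the chain (nested-neighbourhood) characterization of bipartite $2P_2$-free graphs and an interleaved linear ordering of the levels of $A$ and $B$, differing only cosmetically in that you order single vertices of $A$ by decreasing neighbourhoods while the paper groups $A$ into maximal twin classes ordered by increasing neighbourhoods. Your explicit computation showing each cut matrix has at most one distinct non-zero row simply fills in the step the paper leaves as ``not difficult to check.''
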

	\begin{proof}
	Let~$G$ be a bipartite $2P_2$-free graph with bipartition~$(A,B)$.
It is well known~\cite{Yannakakis1982} that a bipartite graph with bipartition $(X_1,X_2)$ is $2P_2$-free if and only if it is a bipartite chain graph, that is, for each $i\in \{1,2\}$, the neighbourhoods of the vertices in~$X_i$ can be ordered linearly with respect to the inclusion relation. 
We may assume that $G$ is connected.
Hence, as $G$ is $2P_2$-free, we can define a sequence
	$A_1, A_2, \ldots, A_m$ of pairwise vertex-disjoint subsets of~$A$ such that
	\begin{itemize}
	\item $A_1\cup A_2\cup \cdots \cup A_m=A$, 	
	\item each~$A_i$ is a maximal set of pairwise twins in~$G$, 
	\item for integers $i,j\in \{1, \ldots, m\}$ with $i<j$, $N_G(A_i)\subsetneq N_G(A_j)$.
	\end{itemize}
	If $m=1$, then $G$ is complete bipartite. In this case, we take a linear ordering $L_1$ of $A_1$ and a linear ordering $L_2$ of $V(G)\setminus A_1$ arbitrarily. 
	It is not hard to see that $L_1\oplus L_2$ is a linear ordering of width at most $1$. Hence $G$ has linear rank-width at most~$1$.
	
	Now suppose that $m\geq 2$. In this case, $G$ is not complete bipartite.
	Notice that for each $i\in \{2, \ldots, m\}$, there is a vertex~$v\in B$ that has a neighbour in~$A_i$ but does not have a neighbour in $A_1\cup \cdots \cup A_{i-1}$; otherwise, vertices in $A_{i-1}\cup A_i$ have the same neighbourhood in~$B$, which contradicts the maximality of~$A_i$.	
	For each $i\in \{2, \ldots, m\}$, let $B_i:=N_G(A_i)\setminus N_G(A_{i-1})$, and let $B_1:=N_G(A_1)$.
	Since~$G$ is connected, we have $B=B_1\cup B_2\cup \cdots \cup B_m$.
	
	For each $i\in \{1, \ldots, m\}$, let~$L^A_i$ be 
	an ordering of~$A_i$ and~$L^B_i$ be an ordering of~$B_i$.
	It is not difficult to check that the linear ordering
	$L^B_1\oplus L^A_1\oplus L^B_2 \oplus L^A_2\oplus \cdots \oplus L^B_m\oplus L^A_m$ has width at most~$1$.
	\end{proof}

We now consider bipartite $(P_1+2P_2)$-free graphs and show the following lemma.
        
\begin{lemma}\label{lem:bipartitep5free}
Every bipartite $(P_1+\nobreak 2P_2)$-free graph has linear rank-width at most~$3$.	
\end{lemma}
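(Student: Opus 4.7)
The plan is to combine Lemmas~\ref{lem:bipartitechain} and~\ref{lem:bipartitecomplement} via a single bipartite complementation. Let $G$ be bipartite $(P_1+2P_2)$-free with bipartition $(A,B)$, and set $G' := G\times(A,B)$.

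First I would check that $G'$ is bipartite $P_5$-free. An induced $P_5 = v_1 v_2 v_3 v_4 v_5$ in $G'$ (with $v_1, v_3, v_5$ in one part of the bipartition and $v_2, v_4$ in the other) has, among the six cross-part pairs of its five vertices, exactly the four edges $v_1 v_2, v_2 v_3, v_3 v_4, v_4 v_5$ and exactly two cross-part non-edges $v_1 v_4$ and $v_2 v_5$. Applying the bipartite complementation back swaps these, so the same five vertices induce in $G$ a graph whose only edges are $v_1 v_4$ and $v_2 v_5$, leaving $v_3$ isolated: this is an induced $P_1 + 2P_2$ in $G$, contradicting the hypothesis.

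Next I would prove the structural claim that every connected bipartite $P_5$-free graph is $2P_2$-free (and hence a bipartite chain). Suppose for contradiction that $H$ is connected, bipartite, and $P_5$-free, yet contains an induced $2P_2$ with edges $a_1 b_1, a_2 b_2$. Connectivity and bipartiteness force $d_H(a_1, a_2)$ to be a positive even integer. If $d_H(a_1, a_2) = 2$, a common neighbour $c$ of $a_1, a_2$ gives the path $b_1, a_1, c, a_2, b_2$: all four consecutive edges are present, the non-edges $b_1 c, b_1 b_2, c b_2$ hold automatically because $b_1, c, b_2$ all lie in the same part of the bipartition, and $a_1 b_2, a_2 b_1$ are non-edges by the choice of the induced $2P_2$, so we have an induced $P_5$ in $H$. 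If $d_H(a_1, a_2) \ge 4$, then a shortest $a_1$-to-$a_2$ path has at least five vertices and, being shortest, is induced, hence contains $P_5$ as an induced subpath. Both cases contradict $P_5$-freeness, so $H$ is $2P_2$-free.

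Applied component by component, this shows every connected component of $G'$ is a $2P_2$-free bipartite graph, so by Lemma~\ref{lem:bipartitechain} has linear rank-width at most $1$. Concatenating optimal orderings of the components (the edges between distinct components being empty, so every cut that straddles a component boundary contributes nothing extra) gives $\lrw(G') \le 1$. Lemma~\ref{lem:bipartitecomplement} then yields $\lrw(G) \le \lrw(G') + 2 \le 3$. The only delicate step is the $d_H(a_1, a_2) = 2$ case above, where the induced-ness of the exhibited $P_5$ relies crucially on the fact that $b_1, c, b_2$ are pairwise non-adjacent as members of one part of the bipartition; this is the single place where bipartiteness (rather than merely the $(P_1+2P_2)$-free condition) does the real work.
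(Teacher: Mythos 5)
Your proof is correct and takes essentially the same route as the paper's: form $G\times(A,B)$, observe it is $P_5$-free and hence (per connected component) a $2P_2$-free bipartite chain graph, and combine Lemma~\ref{lem:bipartitechain} with Lemma~\ref{lem:bipartitecomplement}. You merely spell out in detail the two steps the paper dismisses as immediate (that $G\times(A,B)$ is $P_5$-free, and that connected bipartite $P_5$-free graphs are $2P_2$-free), and your handling of disconnected $G\times(A,B)$ is in fact slightly more careful than the paper's ``without loss of generality'' phrasing.
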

\begin{proof}
	Let~$G$ be a bipartite $(P_1+\nobreak 2P_2)$-free graph with bipartition~$(A,B)$.
	Then $G\times (A,B)$ is $P_5$-free. We may assume without loss of generality that $G\times (A,B)$ is connected. Then, as $G\times (A,B)$ is also bipartite, $G\times (A,B)$ is readily seen to be $2P_2$-free.
	By Lemma~\ref{lem:bipartitechain}, $G\times (A,B)$ has linear rank-width at most~$1$.
	By Lemma~\ref{lem:bipartitecomplement}, $G=G\times (A,B)\times (A,B)$ has linear rank-width at most~$3$.
\end{proof}

We now consider $S_{1,2,2}$-free bipartite graphs and need two results by Lozin~\cite{Lozin2000}.

\begin{lemma}[Lozin~\cite{Lozin2000}]\label{lem:lozin1}
Every connected bipartite $(S_{1,2,2},P_7)$-free graph is $(P_1+\nobreak 2P_2)$-free. 
\end{lemma}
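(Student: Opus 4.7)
The natural approach is by contradiction. Suppose $G$ is a connected, bipartite, $(S_{1,2,2},P_7)$-free graph that contains an induced $P_1+2P_2$ on vertices $\{a,b_1,b_2,c_1,c_2\}$, where $b_1b_2, c_1c_2\in E(G)$ are the edges and $a$ has no neighbour in $\{b_1,b_2,c_1,c_2\}$. Among all induced copies of $P_1+2P_2$ in $G$, I would choose one minimising the distance $d_G(a,\{b_1,b_2,c_1,c_2\})$ and, subject to that, the distance $d_G(\{b_1,b_2\},\{c_1,c_2\})$.

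Let $P=(a=p_0,p_1,\ldots,p_k)$ be a shortest path in $G$ from $a$ to $\{b_1,b_2,c_1,c_2\}$, say with $p_k=b_1$. A shortest path is induced, and $k\ge 2$ since $a$ has no neighbour in $\{b_1,b_2,c_1,c_2\}$. Bipartiteness gives $b_2\not\sim p_{k-1}$ (as $p_{k-1}$ and $b_2$ lie in the same part), and shortest-path minimality gives $b_2\not\sim p_i$ for every $i\le k-2$ (else $a,p_1,\ldots,p_i,b_2$ would be a shorter path to $\{b_1,b_2,c_1,c_2\}$). Hence $a,p_1,\ldots,p_{k-1},b_1,b_2$ is an induced path on $k+2$ vertices, so $P_7$-freeness yields $k\le 4$.

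I would then take a shortest path $Q$ from $V(P)\cup\{b_2\}$ to $\{c_1,c_2\}$ in $G$; analogous shortest-path and bipartiteness arguments bound the length of $Q$ and severely restrict how the internal vertices of $Q$ attach to $P$, to $b_2$, and to the edge $c_1c_2$. The proof is then completed by a case analysis on $k\in\{2,3,4\}$ and on the attachment pattern of $Q$: in each case one assembles, from $a$, from $P$, from $Q$, and from the edges $b_1b_2$ and $c_1c_2$, either an induced $P_7$ (when enough edges line up into a long path) or an induced $S_{1,2,2}$ centred at some attachment vertex $p_i$, with arms of lengths $1$, $2$, $2$ reaching respectively a neighbour on $P$, the vertex $a$ via $p_{i-1},\ldots,p_0$, and a vertex of $\{c_1,c_2\}$ via $Q$ extended by the edge $c_1c_2$. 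Either outcome contradicts our assumptions.

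The main obstacle will be the short-path case $k=2$ coupled with a short $Q$: here the total number of available vertices is small, so $P_7$-freeness provides little leverage and the argument must rely almost entirely on locating an induced $S_{1,2,2}$. This requires a careful enumeration of the few bipartite adjacency patterns between $\{p_1,b_1,b_2\}$ and $\{c_1,c_2\}$ together with their common neighbours, and the minimality of the chosen copy of $P_1+2P_2$ is what allows one to rule out alternative reconfigurations that would otherwise avoid both forbidden subgraphs.
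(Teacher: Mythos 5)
The paper does not actually prove this lemma: it is imported verbatim from Lozin~\cite{Lozin2000}, so there is no in-paper argument to compare yours against. Judged on its own terms, your submission has a genuine gap. The opening is fine: the extremal choice of the $P_1+2P_2$, the fact that a shortest path $P$ from $a$ to $\{b_1,b_2,c_1,c_2\}$ is induced, the bipartiteness/minimality argument showing $b_2$ is non-adjacent to every internal vertex of $P$, and the conclusion $k\le 4$ from $P_7$-freeness are all correct. But everything from ``I would then take a shortest path $Q$'' onward is a plan rather than a proof. The entire content of the lemma lives in the case analysis you defer: you never specify which vertices the interior of $Q$ must avoid, you cannot bound the length of $Q$ by $P_7$-freeness without already knowing how $Q$ attaches to $P$ and to $b_2$ (which is exactly what the case analysis is supposed to determine), and you explicitly concede that the hardest configuration ($k=2$ with $Q$ short) ``requires a careful enumeration'' that you do not carry out. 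A proof that announces its decisive step without performing it is not a proof.

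Moreover, the one concrete description you give of how the forbidden $S_{1,2,2}$ would be assembled is internally inconsistent, which suggests no case has actually been checked. You propose a centre $p_i$ with arms of lengths $1$, $2$, $2$ reaching ``a neighbour on $P$'', ``the vertex $a$ via $p_{i-1},\ldots,p_0$'', and ``a vertex of $\{c_1,c_2\}$ via $Q$ extended by the edge $c_1c_2$''. The arm to $a$ through $p_{i-1},\ldots,p_0$ has length $i$, not $2$, unless $i=2$; and the length-$1$ arm to ``a neighbour on $P$'' would have to use $p_{i-1}$ or $p_{i+1}$, vertices already consumed by the other two arms, so the six vertices of the claimed $S_{1,2,2}$ are not even distinct as described. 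To repair this you would need, for each $k\in\{2,3,4\}$ and each admissible attachment pattern of $Q$, an explicit six-vertex set inducing $S_{1,2,2}$ or a seven-vertex induced path, with all non-adjacencies verified (bipartiteness and the minimality of your chosen copy will be needed repeatedly, e.g.\ to rule out reconfigured copies of $P_1+2P_2$ closer to $a$). Until that enumeration is written down, the lemma is not established.
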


\begin{lemma}[Lozin~\cite{Lozin2000}]\label{lem:lozin2}
Let~$G$ be a bipartite $S_{1,2,2}$-free graph with no twins. If $G$ contains an induced~$P_7$,
then~$G$ is $K_{1,3}$-free 
(and thus $G$ has maximum degree at most~$2$).
\end{lemma}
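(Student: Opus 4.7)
The plan is to assume for contradiction that $G$ contains an induced $K_{1,3}$, with center $c$ and leaves $l_1,l_2,l_3$, and to combine this with the hypotheses (bipartiteness, $S_{1,2,2}$-freeness, no twins, induced $P_7$) to produce an induced $S_{1,2,2}$. The parenthetical ``maximum degree at most $2$'' is then immediate, since any vertex of degree $\geq 3$ in a bipartite graph has three pairwise non-adjacent neighbours and hence induces a $K_{1,3}$.

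First, since $G$ is bipartite, the leaves $l_1,l_2,l_3$ lie on the side opposite $c$ and are pairwise non-adjacent. Next, by the no-twin hypothesis, at most one of $l_1,l_2,l_3$ can satisfy $N_G(l_i)=\{c\}$; otherwise two of them would be false twins. So I may assume without loss of generality that $l_1,l_2$ each admit a neighbour $w_1\in N_G(l_1)\setminus\{c\}$ and $w_2\in N_G(l_2)\setminus\{c\}$.

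The key construction is as follows: if $w_1,w_2$ can be chosen so that $w_1\neq w_2$, so that $w_1$ is non-adjacent to both $l_2$ and $l_3$, and so that $w_2$ is non-adjacent to both $l_1$ and $l_3$, then $\{c,l_1,l_2,l_3,w_1,w_2\}$ induces an $S_{1,2,2}$ in $G$. Indeed, the five edges $cl_1, cl_2, cl_3, l_1w_1, l_2w_2$ give the correct tree structure with centre $c$ and branches of lengths $1,2,2$; all remaining potential edges ($w_1w_2$, $cw_i$, $l_il_j$) are excluded by bipartiteness. This contradicts the $S_{1,2,2}$-freeness of $G$.

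The bulk of the proof is therefore to show that such a choice of $w_1,w_2$ always exists. I would proceed by case analysis on the failure modes of this generic construction: (a) If $w_1=w_2$, then $l_1$ and $l_2$ share the two neighbours $c,w_1$; by no-twins one of them, say $l_1$, has a further neighbour $w_1'\notin\{c,w_1\}$, and I retry with $w_1'$ replacing $w_1$. Iterating, if every such attempt fails, then $N_G(l_1)\setminus\{c\}\subseteq N_G(l_2)$ and symmetrically $N_G(l_2)\setminus\{c\}\subseteq N_G(l_1)$, whence $l_1,l_2$ are twins, a contradiction. (b) If some candidate $w_1$ is adjacent to $l_3$, then $l_3$ also has a neighbour other than $c$, so one may swap the roles of $l_2$ and $l_3$ and re-run the argument. (c) If every vertex in $N_G(l_1)\setminus\{c\}$ is adjacent to $l_2$ or $l_3$, a similar iteration using no-twins on each of the pairs $(l_1,l_2)$ and $(l_1,l_3)$ forces $l_1$ to be a twin of $l_2$ or of $l_3$, again a contradiction. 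The induced $P_7$ hypothesis enters the most degenerate sub-cases: it guarantees the presence of vertices at genuine distance from the $K_{1,3}$, which when combined with the leaves $l_i$ along a shortest path back to $c$ furnish an induced $S_{1,2,2}$ that the above local arguments cannot directly extract.

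The main obstacle I expect is precisely this case analysis: several degenerate configurations of $\{c,l_1,l_2,l_3,w_1,w_2\}$ and their additional neighbours must be ruled out, and in the most constrained cases the induced $P_7$ must be embedded carefully relative to the $K_{1,3}$ in order to exhibit the forbidden $S_{1,2,2}$. Rather than a single clever trick, the crux is the bookkeeping required to track which vertices coincide and which are adjacent across the bipartition, and to convert each ``twin-like'' collapse into an application of the no-twin hypothesis.
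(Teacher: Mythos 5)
The paper does not prove this lemma itself: it is imported verbatim from Lozin's work on bipartite graphs, so there is no in-paper argument to compare yours against. Judged on its own terms, your proposal has a genuine gap at precisely the point where the induced $P_7$ must enter. Cases (a) and (b) are fine in spirit, but case (c) --- where every vertex of $N_G(l_1)\setminus\{c\}$ is adjacent to $l_2$ or $l_3$, and symmetrically for the other leaves --- is the heart of the lemma, and your claim that a twin-style iteration then ``forces $l_1$ to be a twin of $l_2$ or of $l_3$'' is false. The cube graph $Q_3$ witnesses this: it is bipartite, twin-free and $S_{1,2,2}$-free (for any vertex $x$ of $Q_3$, each vertex at distance $2$ from $x$ is adjacent to exactly two neighbours of $x$, so no branch of an $S_{1,2,2}$ centred at $x$ can be extended to length $2$), yet every vertex together with its three neighbours induces a $K_{1,3}$ in which each second neighbour of a leaf is adjacent to exactly one other leaf, and no two vertices are twins. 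Thus $Q_3$ satisfies every hypothesis of the lemma except the existence of an induced $P_7$ (its longest induced path has five vertices) and is not $K_{1,3}$-free. This shows that no amount of local bookkeeping around the claw combined with the no-twin hypothesis can yield the contradiction: the $P_7$ hypothesis is not a tiebreaker for ``degenerate sub-cases'' but carries the entire content of the statement.

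Consequently the single sentence in which you invoke the $P_7$ (``it guarantees the presence of vertices at genuine distance from the $K_{1,3}$\,\dots'') is exactly the part that needs a proof, and it is left unargued: you do not specify how the path is positioned relative to the claw, which of its vertices play the roles of $w_1$ and $w_2$, or why $S_{1,2,2}$-freeness then fails. Until that step is carried out, the argument does not go through. (Your derivation of ``maximum degree at most $2$'' from $K_{1,3}$-freeness and bipartiteness is correct, and the basic construction of an induced $S_{1,2,2}$ from a good pair $w_1,w_2$ is also correct.)
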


\begin{proposition}\label{t-clawpmfree1}
Every bipartite $S_{1,2,2}$-free graph has linear rank-width at most~$3$.
\end{proposition}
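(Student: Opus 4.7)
The plan is to split on whether $G$ contains an induced $P_7$ and reduce each case to a lemma already established. Since the linear rank-width of a graph equals the maximum linear rank-width over its connected components, I would first assume without loss of generality that $G$ is connected. If $G$ is $P_7$-free, then Lemma~\ref{lem:lozin1} gives that $G$ is $(P_1+2P_2)$-free, and Lemma~\ref{lem:bipartitep5free} yields $\lrw(G)\le 3$ directly.

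If instead $G$ contains an induced $P_7$, I would apply the twin-reduction lemma (Lemma~\ref{lem:twinreduced}). Let $H$ be the graph obtained from $G$ by collapsing each maximal twin class to a single vertex. Selecting one representative from each twin class exhibits $H$ as an induced subgraph of $G$, so $H$ is connected (since $G$ is), bipartite, $S_{1,2,2}$-free, and twin-free. The critical step is to show that $H$ still contains an induced $P_7$. This reduces to the claim that no two distinct vertices of an induced $P_7$ in $G$ lie in the same twin class, which follows from a short case check on $P_7 = v_1v_2\cdots v_7$: twins in $G$ must share external neighbourhoods, but $N_{P_7}(v_i)\ne N_{P_7}(v_j)$ for every pair of non-adjacent indices $i\ne j$, and $N_{P_7}(v_i)\setminus\{v_{i+1}\}\ne N_{P_7}(v_{i+1})\setminus\{v_i\}$ for every adjacent pair; so the seven representatives induce a $P_7$ in $H$. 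Lemma~\ref{lem:lozin2} then gives that $H$ is $K_{1,3}$-free; combined with bipartiteness, this forces $H$ to have maximum degree at most~$2$, so $H$ is a path or an even cycle. Paths are caterpillars and hence have linear rank-width at most~$1$ by Lemma~\ref{lem:charlrw1tree}, while any even cycle has linear rank-width at most~$2$ using the natural cyclic vertex ordering. Thus $\lrw(H)\le 2$, and Lemma~\ref{lem:twinreduced} gives $\lrw(G)\le\lrw(H)+1\le 3$.

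The main obstacle is the combinatorial observation that $H$ still contains an induced $P_7$ whenever $G$ does; without it, applying Lemma~\ref{lem:bipartitep5free} directly to $H$ in the second case would only give the bound~$4$, which is too weak. Once this small twin-class verification is in hand, the remainder is a direct assembly of the quoted Lozin lemmas with the twin-reduction and caterpillar lemmas.
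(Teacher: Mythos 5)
Your proposal is correct and follows essentially the same route as the paper: split on the presence of an induced $P_7$, handle the $P_7$-free case via Lemmas~\ref{lem:lozin1} and~\ref{lem:bipartitep5free}, and otherwise collapse maximal twin classes to a twin-free graph that still contains a $P_7$ (the paper notes this simply because $P_7$ has no twins), apply Lemma~\ref{lem:lozin2} to get maximum degree at most~$2$ and hence linear rank-width at most~$2$, and finish with Lemma~\ref{lem:twinreduced}. Your extra verification that the twin classes meet the $P_7$ in at most one vertex each is a slightly more explicit version of the same observation.
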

\begin{proof}
Let~$G$ be a bipartite $S_{1,2,2}$-free graph with bipartition~$(A,B)$.
We may assume that~$G$ is connected.
If~$G$ is $P_7$-free, then by Lemma~\ref{lem:lozin1}, $G$ is $(P_1+\nobreak 2P_2)$-free, and by Lemma~\ref{lem:bipartitep5free}, $G$ has linear rank-width at most~$3$.
Thus, we may assume that~$G$ contains an induced subgraph isomorphic to~$P_7$. Note that $G$ is not a complete bipartite graph.

Let $I_1, I_2, \ldots, I_m$ be the vertex partition of~$G$ such that each~$I_i$ is a maximal set of pairwise twins in~$G$.
Since~$G$ is connected and bipartite, each~$I_i$ is contained in one of $A$ or~$B$.
Let $G_1:=G//I_1 //I_2 // \cdots //I_m$.
Note that~$G_1$ is also connected.
We claim that~$G_1$ has no twins. Note that $G_1$ is not an edge, because $G$ is not a complete bipartite graph. So, $G_1$ has at least three vertices.
Moreover, $G_1$ still has an induced subgraph isomorphic to $P_7$, as $P_7$ has no twins.

Suppose for contradiction that~$G_1$ has two twins~$v_1$ and~$v_2$, and assume that~$v_1$ and~$v_2$ were identified from~$I_{i_1}$ and~$I_{i_2}$ for some~$i_1$ and~$i_2$, respectively.
Since each~$v_i$ has a neighbour and $G_1$ has at least three vertices, $v_1$ and~$v_2$ are in the same part of the bipartition, and thus~$I_{i_1}$ and~$I_{i_2}$ are contained in the same part of the bipartition of~$G$.
Thus~$I_{i_1}$ and~$I_{i_2}$ have the same neighbourhoods in~$G$, contradicting the fact that they are maximal sets of pairwise twins in~$G$.
So, $G_1$ has no twins.
Then $G_1$ has linear rank-width at most~$2$ because by Lemma~\ref{lem:lozin2} every vertex has degree at most $2$.
By Lemma~\ref{lem:twinreduced}, we conclude that $G$ has linear rank-width at most~$3$.
\end{proof}

We now consider $(K_3,C_5,S_{1,2,3})$-free graphs and need the following result as a lemma.

\begin{lemma}[Dabrowski, Dross, and Paulusma~\cite{DabrowskiDP2016}]\label{lem:c5free}
Let $G$ be a connected $(K_3,C_5,S_{1,2,3})$-free graph that does not contain a pair of false twins.
Then $G$ is either bipartite or an induced cycle.
\end{lemma}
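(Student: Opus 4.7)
We plan to prove the contrapositive by contradiction: assuming $G$ is non-bipartite and not equal to an induced cycle, we seek either a forbidden induced subgraph or a pair of false twins. Since $G$ is $(K_3,C_5)$-free, its shortest induced odd cycle $C = v_1 v_2 \cdots v_\ell$ has length $\ell \geq 7$. As $C$ is induced, it suffices to show $V(G) = V(C)$. Suppose otherwise; by connectivity we fix $w \in V(G) \setminus V(C)$ with a neighbor on $C$, and set $N := N(w) \cap V(C)$.

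The first step is to constrain $N$. By $K_3$-freeness, $N$ is independent in $C$, so the members of $N$ split $C$ into arcs of lengths $d_1,\ldots,d_m \geq 2$ summing to $\ell$. For cyclically consecutive $u,v \in N$ with intervening arc of length $d$, the cycle formed from the arc together with the path $u\,w\,v$ is induced (no chord inside the arc since $C$ is induced, and $u,v$ being consecutive in $N$ prevents $w$-chords). If $d$ is odd, this is an induced odd cycle; $d = 3$ is excluded by $C_5$-freeness, and every other odd $d$ forces $d+2 \geq \ell$ by the minimality of $C$, i.e.\ $d \geq \ell - 2$. Summing and using that $\ell$ is odd, we deduce that either $|N| = 1$, or $|N| = 2$ with $N = \{v_a, v_{a+2}\}$ for some index $a$.

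The next step disposes of $|N| = 1$. With $N = \{v_a\}$, I claim the seven vertices $\{w, v_{a-3}, v_{a-2}, v_{a-1}, v_a, v_{a+1}, v_{a+2}\}$ (indices mod $\ell$) induce $S_{1,2,3}$ centred at $v_a$ with branches $v_a w$, $v_a v_{a+1} v_{a+2}$, and $v_a v_{a-1} v_{a-2} v_{a-3}$; the required non-adjacencies between endpoints and between endpoints and internal vertices of the other branches follow from $\ell \geq 7$ (so all relevant cyclic distances on $C$ are at least $2$), the fact that $C$ is induced, and the hypothesis $|N| = 1$. This contradicts $S_{1,2,3}$-freeness.

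The final and hardest step addresses $N = \{v_a, v_{a+2}\}$. Here $w$ and $v_{a+1}$ share the neighborhood $\{v_a, v_{a+2}\}$ and are themselves non-adjacent (by $K_3$-freeness), so the no-false-twin hypothesis supplies a vertex $x$ adjacent to exactly one of $\{w, v_{a+1}\}$; $K_3$-freeness together with the established bound $|N| = 2$ forces $x \notin V(C)$. I then plan to apply the first step to $x$: if $|N(x) \cap V(C)| \leq 1$, either the second step applied directly to $x$ already yields $S_{1,2,3}$, or (when $x$ has no neighbor on $C$ at all) one assembles an $S_{1,2,3}$ with centre $w$ and branches $w\,x$, $w\,v_{a+2}\,v_{a+3}$, $w\,v_a\,v_{a-1}\,v_{a-2}$, the non-adjacencies again coming from $\ell \geq 7$; if $|N(x) \cap V(C)| = 2$, then $x$ inherits the same shifted distance-$2$ structure as $w$ and one iterates the false-twin analysis. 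The main obstacle is organising this iteration so that it terminates, either in an explicit $S_{1,2,3}$ obtained by combining $w$, $x$, and a suitable arc of $C$, or in an unavoidable false-twin pair somewhere along $C$.
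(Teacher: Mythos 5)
First, note that the paper does not prove this lemma at all: it is imported verbatim from~\cite{DabrowskiDP2016} (``Colouring diamond-free graphs''), so there is no in-paper proof to compare against. Judged on its own terms, your first two steps are correct and complete: the arc-parity argument pinning $N$ down to $|N|=1$ or $N=\{v_a,v_{a+2}\}$ is sound (exactly one arc is odd, it has length at least $\ell-2$ by minimality and $C_5$-freeness, so at most one further arc of length $2$ remains), and the seven-vertex $S_{1,2,3}$ in the $|N|=1$ case checks out for all $\ell\ge 7$. The genuine gap is in your final step: for the subcase where the distinguishing vertex $x$ satisfies $|N(x)\cap V(C)|=2$, you do not produce a contradiction but instead propose to ``iterate the false-twin analysis'' and explicitly concede that you have not organised the iteration so that it terminates. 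As written, this is not a proof; nothing prevents an infinite chain of distance-$2$ attachments, each demanding a new distinguishing vertex, and no potential function or extremal choice is offered to stop it.

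The good news is that no iteration is needed; the subcase closes in one move, split by whether $x$ is the neighbour of $w$ or of $v_{a+1}$. If $x\sim v_{a+1}$ and $x\not\sim w$, then by your Step~1 applied to $x$ we have $N(x)\cap V(C)=\{v_{a+1},v_{a+3}\}$ or $\{v_{a-1},v_{a+1}\}$, and in either case an explicit $S_{1,2,3}$ appears: for instance, for $N(x)\cap V(C)=\{v_{a+1},v_{a+3}\}$ take centre $v_a$ with branches $v_a w$, $\;v_a v_{a-1} v_{a-2}$, $\;v_a v_{a+1} x\, v_{a+3}$ (all required non-adjacencies follow from $\ell\ge 7$, the fact that $C$ is induced, and the known attachments of $w$ and $x$). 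If instead $x\sim w$ and $x\not\sim v_{a+1}$ with $N(x)\cap V(C)=\{v_b,v_{b+2}\}$, then $K_3$-freeness of the edge $wx$ forces $\{v_b,v_{b+2}\}\cap\{v_a,v_{a+2}\}=\emptyset$, and the two cycles $w\,x\,v_b\,v_{b-1}\cdots v_{a+2}\,w$ and $w\,x\,v_{b+2}\,v_{b+3}\cdots v_a\,w$ are induced with lengths summing to $\ell+2$; hence one of them is an induced odd cycle of length strictly between $3$ and $\ell$, contradicting either $C_5$-freeness or the minimality of $C$. You should also state explicitly why a shortest induced odd cycle exists and is the right object to minimise over (a shortest odd cycle is automatically induced), but that is cosmetic; the missing termination argument is the substantive defect.
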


	\begin{proposition}\label{prop:clawpmfree2}
	Every non-bipartite $(K_3, C_5, S_{1,2,2})$-free graph has linear rank-width at most~$3$.	
	\end{proposition}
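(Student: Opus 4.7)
The plan is to reduce to a false-twin-free instance where Lemma~\ref{lem:c5free} forces the graph to be a cycle, and then lift the linear rank-width bound through Lemma~\ref{lem:twinreduced}. First I would reduce to the connected case: linear rank-width of a disjoint union equals the maximum over its components, and any bipartite component is already handled by Proposition~\ref{t-clawpmfree1}, so it suffices to bound $\lrw(G)$ for a connected non-bipartite $(K_3,C_5,S_{1,2,2})$-free graph $G$. Let $I_1,\ldots,I_m$ be the partition of $V(G)$ into maximal classes of pairwise false twins, and set $G_1:=G//I_1//\cdots//I_m$. Since any two distinct twin classes are either complete or anti-complete to each other, choosing one representative per class realises $G_1$ as an induced subgraph of $G$; hence $G_1$ is connected and $(K_3,C_5,S_{1,2,2})$-free, and since $S_{1,2,2}$ is an induced subgraph of $S_{1,2,3}$, it is also $S_{1,2,3}$-free.

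Next I would verify the two remaining hypotheses of Lemma~\ref{lem:c5free} for $G_1$. Any pair of false twins in $G_1$ pulls back to two distinct classes $I_i,I_j$ in $G$ that are mutually anti-complete and have identical outside neighbourhoods, which would make $I_i\cup I_j$ a strictly larger pairwise-false-twin set in $G$ and contradict the maximality of the $I_k$'s. For non-bipartiteness, a hypothetical proper $2$-colouring of $G_1$ can be pulled back to $G$ by copying each representative's colour onto its whole class; this is a proper colouring of $G$ because each class is an independent set (its elements being pairwise false twins) and between-class adjacency is recorded faithfully in $G_1$, contradicting the non-bipartiteness of $G$. Lemma~\ref{lem:c5free} applied to $G_1$ then forces $G_1$ to be an induced cycle $C_n$, and $K_3$-freeness, $C_5$-freeness and oddness (forced by non-bipartiteness) together give $n\geq 7$.

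Finally, I would invoke the elementary bound $\lrw(C_n)\leq 2$: ordering the vertices of $C_n$ in cyclic order, every cut splits the cycle into two paths and the associated cut matrix has at most two non-zero entries (one per ``boundary'' edge), hence rank at most $2$. Lemma~\ref{lem:twinreduced} then yields
\[
\lrw(G)\;\leq\;\lrw(G_1)+1\;\leq\;3,
\]
as required. The only real obstacle is the bookkeeping around the twin reduction: one must check that $G_1$ inherits from $G$ both the forbidden induced subgraphs and the non-bipartiteness, which is precisely where the representative-selection and $2$-colouring-pullback arguments come in, and where the hypothesis ``non-bipartite'' is actually used.
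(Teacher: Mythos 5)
Your proof is correct and follows essentially the same route as the paper's: reduce by maximal false-twin classes, apply Lemma~\ref{lem:c5free} to conclude that the reduced graph is an induced (odd, long) cycle of linear rank-width at most~$2$, and lift the bound via Lemma~\ref{lem:twinreduced}. The paper states this more tersely, applying Lemma~\ref{lem:c5free} implicitly to the twin-reduced graph, whereas you spell out the bookkeeping (heredity of the forbidden subgraphs, absence of false twins, and non-bipartiteness of the reduced graph); the mathematical content is identical.
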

	\begin{proof}
	Let~$G$ be a connected non-bipartite $(K_3, C_5, S_{1,2,2})$-free graph.  
	By Lemma~\ref{lem:c5free}, 
	$G$ is a graph obtained from an induced cycle $C=c_1c_2 \cdots c_kc_1$ by adding false twins.
	 For each $i\in \{1, \ldots, k\}$, let~$U_i$ be the maximal set of false twins containing~$c_i$ in~$G$. 
	 As $G//U_1 //U_2// \cdots //U_k$ is isomorphic to $C$ and $C$ has linear rank-width at most $2$,
	 by Lemma~\ref{lem:twinreduced}, we find that $G$ has linear rank-width at most $3$.
	\end{proof}	

We now consider  $(K_3, S_{1,2,2})$-free graphs that contain an induced~$C_5$. We first introduce some additional terminology and lemmas. 
A graph is \emph{$3$-partite} if its vertex set can be partitioned into three independent sets.
We need the following known result.

\begin{theorem}[Alecu et al.~\cite{ALZW18}]\label{t-alzw18}
Let~$G$ be a $3$-partite graph on $n$ vertices with vertex partition $(V_1, V_2, V_3)$ such that 
\begin{itemize}
\item[(a)] for every $a\in V_1$, $b\in V_2$, $c\in V_3$, $G[a,b,c]$ is isomorphic to neither~$K_3$ nor~$3P_1$,
\item[(b)] $G[V_1\cup V_2]$, $G[V_1\cup V_3]$, and $G[V_2\cup V_3]$ are $2P_2$-free.
\end{itemize}
Then $V(G)$ admits a linear ordering $x_1,\ldots,x_n$
  and a labelling $\ell:V(G)\to \{a,b,c\}$ such that $x_ix_j\in E(G)$ for $i<j$ if and only if $(\ell(x_i),\ell(x_j))\in \{(a,b),(b,c),(c,a)\}$.
\end{theorem}

The following lemma follows from Theorem~\ref{t-alzw18} after observing that each cut of the linear ordering $x_1,\ldots,x_n$ has cut-rank at most $3$, because it has at most three different rows. 
	
\begin{lemma}\label{lem:3partitegraph}
Let~$G$ be a $3$-partite graph with vertex partition $(V_1, V_2, V_3)$ such that 
\begin{itemize}
\item[(a)] for every $a\in V_1$, $b\in V_2$, $c\in V_3$, $G[a,b,c]$ is isomorphic to neither~$K_3$ nor~$3P_1$,
\item[(b)] $G[V_1\cup V_2]$, $G[V_1\cup V_3]$, and $G[V_2\cup V_3]$ are $2P_2$-free.
\end{itemize}
Then~$G$ has linear rank-width at most~$3$. 
\end{lemma}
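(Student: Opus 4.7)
The plan is to regard the lemma as a corollary of the theorem of Bogdan, Lozin, Zamaraev, and Wolski \cite{BogdanLZW18}, whose hypotheses match (a) and (b) almost verbatim. First I would translate condition (b) into the bipartite-chain framework: as observed in the proof of Lemma~\ref{lem:bipartitechain}, a bipartite graph is $2P_2$-free if and only if the neighbourhoods on each side can be linearly ordered by inclusion. Hence (b) says that each of $G[V_1\cup V_2]$, $G[V_1\cup V_3]$, $G[V_2\cup V_3]$ is a bipartite chain graph. Condition (a) is precisely the rainbow triangle/independent-triple avoidance property in their theorem. Invoking the cited result then yields a linear ordering of $V(G)$ of cut-rank width at most $3$.

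If instead one wishes to give a self-contained argument, I would build a linear ordering explicitly. The crucial quantitative consequence of (a) is the following: for any $a,a'\in V_1$ and any $b\in V_2$, $c\in V_3$, writing $r_b=\chi_a(b)-\chi_{a'}(b)$ and $s_c=\chi_a(c)-\chi_{a'}(c)$ for the differences of adjacency indicators, the constraint $\chi_a(b)+\chi_a(c)+\chi(bc)\in\{1,2\}$ and its analogue for $a'$ force $r_b+s_c\in\{-1,0,1\}$ for all such $b,c$. This rules out the only obstructions to simultaneously comparing $a$ and $a'$ in the $V_2$- and $V_3$-chain orderings: either both neighbourhoods in $V_2,V_3$ are comparable, or one pair of neighbourhoods coincides. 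Symmetric statements hold for pairs in $V_2$ and $V_3$. Consequently, each $V_i$ admits an ordering compatible with both bipartite chains it participates in, up to twins.

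Given these compatible orderings, I would then interleave them in the spirit of the proof of Lemma~\ref{lem:bipartitechain}: place the twin-classes of $V_1$ in order, interleaving the newly-appearing twin-classes of $V_2$ and $V_3$ between consecutive $V_1$-classes. The verification of the width bound proceeds cut by cut: at any cut, the cut matrix decomposes as a sum of at most three submatrices, one per pair $(V_i,V_j)$, and each such submatrix restricted to the cut forms the restriction of a bipartite chain graph, which contributes rank at most $1$ over the binary field. Summing gives cut-rank at most $3$.

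The main obstacle in the self-contained route is bookkeeping for cuts lying inside the third part to be inserted, where all three bipartite chain interfaces are simultaneously active; handling this cleanly is exactly what the theorem of Bogdan et al.\ packages for us, and for that reason I would favour simply citing it.
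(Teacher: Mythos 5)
Your primary route is exactly the paper's proof---it cites the theorem of \cite{BogdanLZW18}---and it is essentially correct, but note that the cited result does not directly give a cut-rank bound: it gives a \emph{letter-graph} ordering, i.e.\ a linear order $x_1,\ldots,x_n$ with a labelling $\ell:V(G)\to\{a,b,c\}$ such that $x_ix_j\in E(G)$ if and only if $i<j$ and $(\ell(x_i),\ell(x_j))\in\{(a,b),(b,c),(c,a)\}$, and the one remaining (and only) step of the paper's proof is the observation that in every cut matrix of this ordering the row of $x_p$ is determined by $\ell(x_p)$, so there are at most three distinct rows and hence cut-rank at most~$3$. Your self-contained alternative is not needed, and as sketched it contains a false step---the biadjacency matrix of a $2P_2$-free bipartite graph restricted to a cut can have rank well above $1$ over the binary field (e.g.\ $k$ strictly nested neighbourhoods give rank $k$), so the ``sum of three rank-$1$ pieces'' count does not go through without the delicate interleaving you yourself flag as the obstacle; you are right to prefer the citation.
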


Let $G$ be a graph and $V_1, V_2, V_3$ be 
three pairwise
disjoint independent sets of $G$. We denote the subgraph of $G$ induced by $V_1\cup V_2\cup V_3$ as $G[V_1, V_2, V_3]$.
Moreover, if $G[V_1, V_2, V_3]$ satisfies conditions~(a) and~(b) in Lemma~\ref{lem:3partitegraph}, then we say that $G[V_1, V_2, V_3]$ is \emph{nice}.

We are now ready to prove the following result. We note that Brandst\"adt, Mahfud and Mosca~\cite{BrandstadtMM16} gave an alternative proof of the result  from~\cite{DabrowskiDP2016}, which shows that $(K_3,S_{1,2,2})$-free graphs have bounded rank-width. Some parts of the proof of our result below are similar to parts of the proof of~\cite{BrandstadtMM16}. As we need to use slightly different arguments, we have chosen to keep our proof self-contained. However, we explicitly indicate whenever there is overlap between our arguments and the ones used in~\cite{BrandstadtMM16}.

	\begin{proposition}\label{p-c5}
	Every $(K_3, S_{1,2,2})$-free graph that contains an induced~$C_5$ has linear rank-width at most~$58$.	
	\end{proposition}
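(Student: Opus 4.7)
The plan is as follows. Let $G$ be a $(K_3, S_{1,2,2})$-free graph containing an induced cycle $C=c_1c_2c_3c_4c_5c_1$; without loss of generality, $G$ is connected. Since $G$ is triangle-free, for every $v\in V(G)\setminus V(C)$ the set $N_G(v)\cap V(C)$ is independent in $C_5$; hence it has size at most two, and if it has size two it equals $\{c_i,c_{i+2}\}$ for some $i$ (indices modulo $5$). This partitions $V(G)\setminus V(C)$ into eleven classes: a class $A_\emptyset$ of vertices with no neighbour in $C$, five classes $A_i$ of vertices whose only $C$-neighbour is $c_i$, and five classes $A_{i,i+2}$ of vertices whose $C$-neighbourhood equals $\{c_i,c_{i+2}\}$.

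The bulk of the argument exploits $S_{1,2,2}$-freeness to constrain the edges inside each class and between classes. Representative facts to establish include: each $A_i$ is an independent set, since otherwise an adjacent pair in $A_i$ together with $c_i$ and the subpath $c_{i+1}c_{i+2}c_{i+3}$ of $C$ induces an $S_{1,2,2}$ centred at $c_i$; the bipartite graph between $A_i$ and $A_j$ with $i\ne j$ is highly structured; every vertex of $A_{i,i+2}$ has a prescribed relationship with $A_i\cup A_{i+2}$; and every vertex of $A_\emptyset$ attaches to the rest of $G$ in a bipartite-like pattern, since otherwise a short path from $A_\emptyset$ to $V(C)$ produces an induced $S_{1,2,2}$. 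All of these arguments proceed by enumerating which branches of a would-be induced $S_{1,2,2}$ can be realised using sub-paths of $C$ and edges emanating from the classes.

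With this structural picture in hand, I would construct a linear ordering of $G$ as follows. First, a bounded number of bipartite complementations between appropriate pairs of classes (each costing at most $2$ in linear rank-width by Lemma~\ref{lem:bipartitecomplement}) transform $G-V(C)$ into a disjoint union of pieces, each of which is either bipartite $S_{1,2,2}$-free (linear rank-width at most $3$ by Proposition~\ref{t-clawpmfree1}) or satisfies the hypotheses of Lemma~\ref{lem:3partitegraph} (again linear rank-width at most $3$). Concatenating optimal orderings of these pieces and re-inserting the five vertices of $C$ at the end yields a linear ordering of $G$ whose width is bounded by $3$ (from the nice pieces), plus $2$ per bipartite complementation performed, plus at most $5$ for reintroducing $V(C)$, which ultimately gives the claimed bound of $58$.

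The main obstacle is the detailed case analysis needed to control the interactions between the eleven classes of the partition: one must show that $A_\emptyset$ is bipartite-like and that the non-empty classes $A_i$ and $A_{i,i+2}$ together form a structure that can be absorbed, using only a constant number of bipartite complementations, into the bipartite and $3$-partite models handled by Proposition~\ref{t-clawpmfree1} and Lemma~\ref{lem:3partitegraph}. Exactly which pairs of classes require complementation, and how many such operations are needed in the worst case, is what pins down the constant $58$; any improvement in the accounting here would directly improve the bound.
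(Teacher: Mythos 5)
Your high-level plan coincides with the paper's: classify the vertices outside $C$ by their neighbourhood on $C$, use $S_{1,2,2}$-freeness and $K_3$-freeness to constrain the interactions between the classes, then apply a bounded number of bipartite complementations (Lemma~\ref{lem:bipartitecomplement}) to break $G-V(C)$ into disjoint pieces covered by Lemma~\ref{lem:3partitegraph}, and finally pay $5$ for re-inserting $C$. However, as written the proposal has two genuine gaps. First, your class $A_\emptyset$ does not merely attach ``in a bipartite-like pattern'': the correct and needed statement is that it is \emph{empty}. If $w_2$ had no neighbour on $C$ but were adjacent to some $w_1$ with a neighbour on $C$, one exhibits an induced $S_{1,2,2}$ (using $c_{i-2},c_{i-1},c_i,c_{i+1},w_1,w_2$ when $w_1$ has one neighbour $c_i$, and similarly in the two-neighbour case); by connectivity this kills all of $A_\emptyset$. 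Leaving $A_\emptyset$ in play is not a harmless generality: a nonempty $A_\emptyset$ could carry arbitrary triangle-free structure far from $C$, and nothing in your outline controls it. (A smaller slip: the independence of each $A_i$ follows from $K_3$-freeness via the triangle $\{a,b,c_i\}$, not from an $S_{1,2,2}$ ``centred at $c_i$'' --- an adjacent pair in $A_i$ together with $c_i$ is a triangle, so the six vertices you name do not induce $S_{1,2,2}$.)

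Second, and more importantly, the entire quantitative content of the proposition --- the constant $58$ --- lives exactly in the part you defer. The paper's proof does not stop at the ten classes $V_i$, $W_i$: it further splits each $W_i$ into $W_i^-$, $W_i^+$, $W_i^*$ according to non-neighbours in $W_{i\pm1}$, and each $V_i$ into $V_i^1,V_i^2,V_i^3$ according to neighbours in $W_{i\pm2}^*$, proves several additional completeness/anti-completeness claims about these refined parts, and only then identifies an explicit family of fifteen pairwise disjoint nice $3$-partite graphs $G_i^j$ whose union captures all remaining edges after exactly $25$ bipartite complementations, giving $3+2\cdot 25+5=58$. Saying that ``exactly which pairs of classes require complementation \ldots is what pins down the constant'' concedes that this step is missing; without it one has neither a proof that finitely many complementations suffice nor the stated bound. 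So the proposal is a correct strategy outline in the same spirit as the paper, but not yet a proof.
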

	
        \begin{proof}
	Let~$G$ be a $(K_3, S_{1,2,2})$-free graph that contains an induced subgraph $C$ isomorphic to~$C_5$.
	We may assume without loss of generality that~$G$ is connected. 
We write $C=c_1c_2c_3c_4c_5c_1$ and interpret subscripts modulo~$5$.
	Let $U:=V(G)\setminus V(C)$.
	
	Since~$G$ is $K_3$-free, every vertex in~$U$ has either no neighbours in~$C$ or exactly one neighbour in~$C$ or exactly two neighbours, which are not consecutive in~$C$.
	
We claim that every vertex of $U$ has a neighbour in $C$.
This can be seen as follows. Suppose, for contradiction, that $U$ contains a vertex that has no neighbour in $C$. As $G$ is connected, this means that $U$ contains two vertices
 $w_1$ and $w_2$, such that $w_1$ has a neighbour in $C$ and $w_2$ is adjacent to $w_1$ but $w_2$ has no neighbour in $C$.
	If $w_1$ has exactly one neighbour $c_i$ in $C$, then $G[c_i, c_{i+1},c_{i-1}, c_{i-2},  w_1, w_2]$ is isomorphic to $S_{1,2,2}$.
	If $w_1$ has two neighbours $c_{i-1}, c_{i+1}$ in $C$, then $G[w_1, w_2, c_{i-1}, c_i, c_{i-2}, c_{i-3}]$ is isomorphic to $S_{1,2,2}$. 	 
However, both cases are not possible, as	 $G$ is $S_{1,2,2}$-free. Hence, we conclude that every vertex in $U$ has a neighbour in $C$.
	
	Consequently, we can partition~$U$ into ten parts $\{V_1, \ldots, V_5, W_1, \ldots, W_5\}$ such that for each $i\in \{1, \ldots ,5\}$,
	\begin{itemize}
	\item $V_i$ is the set of vertices whose unique neighbour in~$C$ is~$c_i$, and
	\item $W_i$ is the set of vertices that are adjacent to~$c_{i-1}$ and~$c_{i+1}$.
	\end{itemize}
	Each set in $\{V_1, \ldots, V_5, W_1, \ldots, W_5\}$ is an independent set as~$G$ is $K_3$-free.
	We verify basic relations between these parts. 
	Let $i\in \{1,\ldots, 5\}$. 
	\begin{enumerate}
	\item[(1)] $V_i$ is complete to $V_{i-1}\cup V_{i+1}$, and anti-complete to $V_{i-2}\cup V_{i+2}$.
	
	Suppose, for contradiction, that there are $a\in V_i$ and $b\in V_{i+1}$ that are not adjacent. 
	Then $G[c_i, a, c_{i+1}, b, c_{i-1}, c_{i-2}]$ is isomorphic to $S_{1,2,2}$, a contradiction. 
	Thus, there are no such vertices. This implies that $V_i$ is complete to $V_{i+1}$, and by symmetry also complete to $V_{i-1}$.
	Suppose that there are $a\in V_i$ and $b\in V_{i+2}$ that are adjacent.
	Then $G[c_i, c_{i+1}, a, b, c_{i-1}, c_{i-2}]$ is isomorphic to $S_{1,2,2}$, a contradiction.
	Hence, $V_i$ is anti-complete to $V_{i+2}$ and by symmetry also anti-complete to $V_{i-2}$.\dia
	
	\item[(2)] $W_i$ is anti-complete to $W_{i-2}\cup W_{i+2}$.

	This is because $G$ is $K_3$-free.\dia
	\item[(3)] $W_i$ is complete to~$V_i$, and anti-complete to $V_{i-1}\cup V_{i+1}$.

	Suppose that there are vertices $a\in W_i$ and $b\in V_i$ that are not adjacent to each other. 
	Then $G[c_{i-1}, a, c_i, b, c_{i-2}, c_{i-3}]$ is isomorphic to $S_{1,2,2}$, a contradiction.
	This implies that $W_i$ is complete to $V_i$.
	As $G$ is $K_3$-free, $W_i$ is anti-complete to $V_{i-1}\cup V_{i+1}$.	\dia
	
	\item[(4)] $G[V_i\cup W_{i+2}]$, $G[V_i\cup W_{i-2}]$, $G[W_i\cup W_{i+1}]$ are $2P_2$-free.
	
	Suppose that there are $a_1, a_2\in V_i$ and $b_1, b_2\in W_{i+2}$ such that $a_1b_1, a_2b_2\in E(G)$ and $a_1b_2, a_2b_1\notin E(G)$.
	Then $G[c_i, c_{i-1}, a_1, b_1, a_2, b_2]$ is isomorphic to $S_{1,2,2}$, a contradiction.
So, $G[V_i\cup W_{i+2}]$ is $2P_2$-free, and by symmetry $G[V_i\cup W_{i-2}]$ is $2P_2$-free. 
Suppose that there are $a_1, a_2\in W_i$ and $b_1, b_2\in W_{i+1}$ such that $a_1b_1, a_2b_2\in E(G)$ and $a_1b_2, a_2b_1\notin E(G)$.
	Then $G[c_{i-1}, c_{i-2}, a_1, b_1, a_2, b_2]$ is isomorphic to $S_{1,2,2}$, a contradiction. \dia
	
	\item[(5)] For $v\in V_i, w\in W_{i+2}, z\in W_{i-2}$, $\{v,w,z\}$ is not an independent set.
	
	Suppose that such $v,w,z$ forming an independent set exist. 
	Then $G[c_i, v, c_{i+1}, w, c_{i-1}, z]$ is isomorphic to $S_{1,2,2}$, a contradiction. \dia
      \end{enumerate}

      By Claims~(4) and~(5)     
and the fact that $G$ is $K_3$-free, 
we deduce that for each $i \in \{1,\ldots,5\}$,
      $G[V_i,W_{i-2},W_{i+2}]$ is a nice $3$-partite graph. 
      Moreover, $V_i$ is complete to
      $V_{i-1}, V_{i+1}, W_i$ and anti-complete to $V_{i+2}, V_{i+3}, W_{i-1},W_{i+1}$, whereas~$W_i$ is anti-complete to $W_{i-2},W_{i+2}$.
      By doing bipartite complementations between $V_i$ and $V_{i+1}\cup W_i$ for each $i \in \{1,\ldots,5\}$,
      we may assume that each edge
not incident to a vertex of~$C$
      belongs to $G[V_i,W_{i+2},W_{i-2}]$ for some $i \in \{1,\ldots,5\}$.
      
  Our goal is now to compute a refined set of
      pairwise non-intersecting $3$-partite graphs by doing a small number of  bipartite complementations such that each new $3$-partite graph satisfies conditions~(a) and~(b) of Lemma \ref{lem:3partitegraph}.

      We first observe that each $G[V_i,W_{i-2},W_{i+2}]$ intersects only $G[V_{i+1},W_{i-1},W_{i-2}]$ and $G[V_{i-1}, W_{i+2},W_{i+1}]$.  We now aim, for each $i \in \{1,\ldots,5\}$, to split $V_i$, $W_{i-2}$ and $W_{i+2}$ in such a way that we can construct the desired non-intersecting $3$-partite graphs after some bipartite
      complementations. We will use the same construction as in \cite{BrandstadtMM16}, but the way we use the different sets differs from \cite{BrandstadtMM16}. 

      For each $i \in \{1,\ldots,5\}$, let us define the following partition of $W_{i}$:
      \begin{align*}
        W_{i}^- &= \{x\in W_{i}\mid x\ \textrm{has a non-neighbour in}\  W_{i+1}\},\\
        W_{i}^+ &=\{x\in W_{i}\setminus W_{i}^- \mid x\ \textrm{has a non-neighbour in}\ W_{i-1}\},\\
        W_{i}^* &= W_{i}\setminus (W_{i}^-\cup W_{i}^+).
      \end{align*}
      By definition, $W_i^+\cup W_i^*$ is complete to $W_{i+1}$ and $W_i^-$ is complete to $W_{i+1}^*$.
      We claim that $W_i^-$ is also complete to $W_{i+1}^-$.
      Suppose that a vertex $x\in W_i^-$ has a non-neighbour $y$
          in $W_{i+1}^-$. Then, by definition $y$ has a non-neighbour $z$ in $W_{i+2}$. Therefore, $G[c_{i+1},z,x,c_{i-1},c_{i+2},y]$ is isomorphic to $S_{1,2,2}$ because $x$
          is not adjacent to $z$ by (2), a contradiction.
	Thus, $W_i^-$ is complete to $W_{i+1}^-$.
	
      We now show some relationships between the $V_i$'s and $W_i$'s, which were also proven in \cite{BrandstadtMM16}, but we add the proofs for completeness.  Let $i \in \{1,\ldots,5\}$. 
      \begin{enumerate}[(a)]          
        \item $V_i$ is anti-complete to $W_{i+2}^+$ and $W_{i-2}^-$.

 Suppose, for contradiction, that a vertex $x\in V_i$ has a neighbour $z\in W_{i+2}^+\cup W_{i-2}^-$.
First suppose that $z\in W_{i+2}^+$.
Then, by definition, $z$ has a non-neighbour $y\in W_{i+1}$. However, now $G[c_i,c_{i-1},
          x,z,y,c_{i+2}]$ is isomorphic to $S_{1,2,2}$, a contradiction.  Now suppose that $z\in W_{i-2} ^-$. Then, by definition, $z$ has a non-neighbour $y\in W_{i-1}$. However, now
          $G[c_i,c_{i+1},y,c_{i-2},x,z]$ is
          isomorphic to $S_{1,2,2}$, another contradiction. \dia
       
        \item If a vertex $x\in V_i$ has a neighbour in $W_{i+2}^*$ (resp. $W_{i-2}^*$), then $x$ is complete to $W_{i+2}^-$ and anti-complete to $W_{i-2}$ (resp.
          complete to $W_{i-2}^+$ and anti-complete to $W_{i+2}$).

          Suppose $x\in V_i$ has a neighbour $y$ in $W_{i+2}^*$. Because $W_{i+2}^*$ is complete to $W_{i+3}=W_{i-2}$, we find that
          $x$ is anti-complete to $W_{i-2}$. Suppose that $x$ has
          a non-neighbour $z$ in $W_{i+2}^-$. By definition, there is a vertex $w\in W_{i-2}$ not adjacent to $z$, but adjacent to $y$. Then, $G[y,x,w,c_{i-1},c_{i+1},z]$
          is isomorphic to $S_{1,2,2}$, a contradiction.

          Now suppose that $x\in V_i$ has a neighbour $y$ in $W_{i-2}^*$. Because by definition $W_{i-2}^*$ is complete to $W_{i-3}=W_{i+2}$, we find that $x$ is anti-complete to
          $W_{i+2}$. Suppose that $x$ has a non-neighbour $z$ in $W_{i-2}^+$.  By definition, $z$ has a non-neighbour $w$ in $W_{i+2}$, which is adjacent to $y$. Again,
          $G[y,x,w,c_{i+1},c_{i-1},z]$ is isomorphic to $S_{1,2,2}$, a contradiction. \dia
        \end{enumerate}

\noindent
  	For each $i \in \{1,\ldots,5\}$, let
        \begin{align*}
          V_i^1 & =  \{x\in V_i\mid x\ \textrm{has a neighbour in $W_{i+2}^*$}\},\\
          V_i^2 &= \{x\in V_i\mid x\ \textrm{has a neighbour in $W_{i-2}^*$}\},\\
          V_i^3 &= \{x\in V_i\mid x\ \textrm{has no neighbour in}\ W_{i+2}^* \cup W_{i-2}^*\}.
        \end{align*}
          From (a), we know that for each $x\in V_i$,
in the graph $G[V_i \cup W_{i-2} \cup W_{i+2}]$,
$x$ can have neighbours in only $W_{i+2}^- \cup W_{i+2}^*$ and in $W_{i-2}^+\cup W_{i-2}^*$.  
          From (b), every vertex in $V_i^1$ is complete to $W_{i+2}^-$ and anti-complete to $W_{i-2}$, 
          and every vertex in $V_i^2$ is complete to $W_{i-2}^+$ and anti-complete to $W_{i+2}$.
          Thus, $V_i^1, V_i^2, V_i^3$ are disjoint sets.
               
               The common neighbours between a vertex $x\in V_i$ and $y\in V_{i+1}$ are in $W_{i-2}^*$ and the common neighbours between $x\in
        V_i$ and $y\in V_{i-1}$ are in $W_{i+2}^*$.  
For each $i \in \{1,\ldots,5\}$, we define the following $3$-partite graphs: 
        \begin{align*}
          G_i^1 & = G[V_i^1, V_{i-1}^2, W_{i+2}^*],\\
          G_i^2 &= G[V_i^3,W_{i+2}^-,W_{i-2}^+].
        \end{align*}

	Now, we do some bipartite complementations.
       First we do bipartite complementations between
        $W_i^+\cup W_i^*$ and $W_{i+1}$, and between $W_i^-$ and $W_{i+1}^-\cup W_{i+1}^*$ for each $i$. 
        The resulting graph has the property that the edges between $W_i$ and $W_{i+1}$ are always between $W_i^-$ and $W_{i+1}^+$. 
	Secondly, we do bipartite complementations between $V_i^1$ and $W_{i+2}^-$, and $V_i^2$ and $W_{i-2}^+$ for each $i$. 
	Lastly, we remove $C$.
	Observe that the edges of the remaining graph are all contained in one $G_i^j$ for some $i \in \{1,\ldots,5\}$
        and $j \in \{1,2\}$, and by definition the $G_i^j$'s are pairwise disjoint. 

        Because all of the graphs $G_i^j$ are disjoint, the linear rank-width of the resulting graph is the maximum linear rank-width of its connected components. Since each connected component is a nice
        $3$-partite graph, 
        by Lemma \ref{lem:3partitegraph}, we conclude that the linear rank-width of the resulting graph is at most $3$. We will now count the number of
        times we applied bipartite complementations. For each $i \in \{1,\ldots,5\}$, we did one bipartite complementation to keep only the edges between $V_i$ and $W_{i-2}\cup W_{i+2}$, 
        and then two bipartite
        complementations to remove the edges between~$W_i$ and~$W_{i+1}$ apart from those between $W_i^-$ and $W_{i+1}^+$, and finally two bipartite complementations to remove the edges between $V_i^1$ and $W_{i+2}^-$, and $V_i^2$ and $W_{i-2}^+$, that is, in total five bipartite complementations, resulting in a total of $25$ bipartite complementations. So, by Lemma \ref{lem:bipartitecomplement}, the graph $G-V(C)$ has linear rank-width at most
        $3+2*25=53$. Because $|V(C)|=5$, we conclude that the linear rank-width of $G$ is at most $58$. 
	\end{proof}
	
We are now ready to prove Theorem~\ref{t-maintwosubgraphs}.

\medskip
\noindent
{\bf Theorem~\ref{t-maintwosubgraphs} (restated).}
{\it Every $(K_3, S_{1,2,2})$-free graph has linear rank-width at most~$58$.}

\begin{proof} 
Let $G$ be a $(K_3, S_{1,2,2})$-free graph. We may assume that~$G$ is connected. If $G$ is bipartite, we use Proposition~\ref{t-clawpmfree1}.
If $G$ is non-bipartite but $C_5$-free, we use Proposition~\ref{prop:clawpmfree2}. In the remaining case, we use Proposition~\ref{p-c5}.
\end{proof}

\section{Concluding Remarks}\label{sec:conclusion}

In this paper we researched the relationship between pivot-minors and boundedness of linear rank-width. We first proved that for every tree~$T$ that is not a caterpillar, the class of $T$-pivot-minor-free graphs has unbounded linear rank-width.
We then posed Conjecture~\ref{q-1}, which states that an affirmative answer can be found whenever $T$ {\it is} a caterpillar.
We were only able to give an affirmative answer to this conjecture that holds for every caterpillar $T$, if the class of $T$-pivot-minor-free graphs is, in addition, also distance-hereditary.
We also proved that the class of $K_{1,3}$-pivot-minor-free graphs has bounded linear rank-width.
As a next step for proving Conjecture~\ref{q-1}, it seems natural to consider the case where $T=K_{1,r}$ for $r\geq 4$.
We also proved Conjecture~\ref{q-1} for $P=P_4$.
Since Conjecture~\ref{q-1} is equivalent to the alternative conjecture that for every path $P$, the class of $P$-pivot-minor-free graphs has bounded linear rank-width, the case where $P=P_5$ is another interesting open case.

For obtaining our results (in particular, the case where $T=K_{1,3}$) we followed a general strategy consisting of two steps.  We believe this strategy is also useful for making further progress towards Conjecture~\ref{q-1}. However, Step~1 of the strategy requires us to find a hereditary graph class (class of graphs that can be characterized by a set of forbidden induced subgraphs) that contains the class of $T$-pivot-minor-free graphs under consideration. In general, finding an appropriate hereditary graph class is a challenging task.

The fact that $K_{1,3}$-pivot-minor-free graphs have bounded linear rank-width follows from a stronger result that we showed, namely that $(K_3,S_{1,2,2})$-free graphs have bounded linear rank-width. Showing this stronger result will be useful for a systematic study on the boundedness of linear rank-width of  $(H_1,H_2)$-free graphs.
Such a classification  already exists for $H$-free graphs, as observed in~\cite{DJP19}: for a graph $H$, the class of $H$-free graphs has bounded linear rank-width if and only if~$H$ is a subgraph of $P_3$ not isomorphic to $3P_1$.
We note that similar classifications also exist for other width parameters: for the tree-width of $(H_1,H_2)$-free graphs~\cite{BBJPPV20}, 
which was later generalized to a classification for  tree-width of ${\cal H}$-free graphs, where~${\cal H}$ is a finite set of graphs~\cite{LR},
rank-width of $H$-free graphs
(see~\cite{DP16}), rank-width of $H$-free bipartite graphs~\cite{DP14,Lo02,LV08}, and up to five non-equivalent open cases, rank-width of $(H_1,H_2)$-free graphs (see~\cite{BDJP19} or~\cite{DJP19}), and for the mim-width of $H$-free graphs~\cite{BHMPP20}, whereas there is still an infinite number of open cases left for the mim-width of $(H_1,H_2)$-free graphs~\cite{BHMPP20}.

We leave a systematic study into boundedness of linear rank-width of $(H_1,H_2)$-free graphs for future research. Here, we only collect known results. The class of $(H_1, H_2)$-free graphs has bounded linear rank-width if 
\begin{itemize}
	\item one of $H_1$ and $H_2$ is a subgraph of $P_3$ that is not isomorphic to $3P_1$~\cite{DJP19},
	\item $(H_1, H_2)=(K_3, S_{1,2,2})$ and $(3P_1, \overline{S_{1,2,2}})$ (Theorem~\ref{t-maintwosubgraphs})
	\item $(H_1, H_2)=(P_4, F)$ where $F$ is a threshold graph~\cite{BKV17}.
\end{itemize}
The class of $(H_1, H_2)$-free graphs has unbounded linear rank-width if 
\begin{itemize}
	\item $(H_1, H_2)=(K_3, S_{1,2,3})$ or $(3P_1, \overline{S_{1,2,3}})$~\cite{ALZ18}, 
	\item $(H_1, H_2)=(P_4, F)$ where $F$ is not a threshold graph~\cite{BKV17}, 
	\item all known cases where $(H_1, H_2)$-free graphs have unbounded rank-width.
\end{itemize}

\bibliographystyle{plain}

\begin{thebibliography}{10}

\bibitem{AFP13}
Isolde Adler, Arthur~M. Farley, and Andrzej Proskurowski.
\newblock Obstructions for linear rank-width at most 1.
\newblock {\em Discrete Appl. Math.}, 168:3--13, 2014.

\bibitem{AdlerK2015}
Isolde Adler and Mamadou~Moustapha Kant{\'e}.
\newblock Linear rank-width and linear clique-width of trees.
\newblock {\em Theoret. Comput. Sci.}, 589:87--98, 2015.

\bibitem{AdlerKK2016}
Isolde Adler, Mamadou~Moustapha Kant\'{e}, and O-joung Kwon.
\newblock Linear rank-width of distance-hereditary graphs {I}. {A}
  polynomial-time algorithm.
\newblock {\em Algorithmica}, 78(1):342--377, 2017.

\bibitem{ALZ18}
Bogdan Alecu, Mamadou~Moustapha Kant\'e, Vadim~V. Lozin, and Viktor Zamaraev.
\newblock Between clique-width and linear clique-width of bipartite graphs.
\newblock {\em Discrete Mathematics}, 343(8):111926, 2020.

\bibitem{ALZW18}
Bogdan Alecu, Vadim~V. Lozin, Dominique de~Werra, and Viktor Zamaraev.
\newblock Letter graphs and geometric grid classes of permutations:
  characterization and recognition.
\newblock {\em Discrete Appl. Math.}, 283:482--494, 2020.

\bibitem{BienstockRST1991}
Dan Bienstock, Neil Robertson, Paul Seymour, and Robin Thomas.
\newblock Quickly excluding a forest.
\newblock {\em J. Combin. Theory Ser. B}, 52(2):274--283, 1991.

\bibitem{BBJPPV20}
Hans~L. Bodlaender, Nick Brettell, Matthew Johnson, Giacomo Paesani, Dani\"el
  Paulusma, and Erik~Jan van Leeuwen.
\newblock Steiner trees for hereditary graph classes: {A} treewidth
  perspective.
\newblock {\em Theoret. Comput. Sci.}, 867:30--39, 2021.

\bibitem{BDJP19}
Marthe Bonamy, Nicolas Bousquet, Konrad~K. Dabrowski, Matthew Johnson, Dani\"el
  Paulusma, and Th\'eo Pierron.
\newblock Graph isomorphism for {$(H_1,H_2)$-free} graphs: an almost complete
  dichotomy.
\newblock {\em Algorithmica}, 83(3):822--852, 2021.

\bibitem{Bouchet1988}
Andr{\'e} Bouchet.
\newblock Transforming trees by successive local complementations.
\newblock {\em J. Graph Theory}, 12(2):195--207, 1988.

\bibitem{Bouchet1989}
Andr{\'e} Bouchet.
\newblock Connectivity of isotropic systems.
\newblock In {\em Combinatorial {M}athematics: {P}roceedings of the {T}hird
  {I}nternational {C}onference ({N}ew {Y}ork, 1985)}, volume 555 of {\em Ann.
  New York Acad. Sci.}, pages 81--93. New York Acad. Sci., New York, 1989.

\bibitem{Bouchet1994}
Andr{\'e} Bouchet.
\newblock Circle graph obstructions.
\newblock {\em J. Combin. Theory Ser. B}, 60(1):107--144, 1994.

\bibitem{BrandstadtMM16}
Andreas Brandstädt, Suhail Mahfud, and Raffaele Mosca.
\newblock Bounded clique-width of $({S}_{1,2,2}$, triangle$)$-free graphs.
\newblock arXiv:1608.01820, 2016.

\bibitem{BHMPP20}
Nick Brettell, Jake Horsfield, Andrea Munaro, Giacomo Paesani, and Dani\"el
  Paulusma.
\newblock Bounding the mim-width of hereditary graph classes.
\newblock {\em J. Graph Theory}, to appear.

\bibitem{BKV17}
Robert Brignall, Nicholas Korpelainen, and Vincent Vatter.
\newblock Linear clique-width for hereditary classes of cographs.
\newblock {\em Journal of Graph Theory}, 84(4):501--511, 2017.

\bibitem{Co14}
Bruno Courcelle.
\newblock Clique-width and edge contraction.
\newblock {\em Inform. Process. Lett.}, 114(1-2):42--44, 2014.

\bibitem{CourcelleMR00}
Bruno Courcelle, Johann~A. Makowsky, and Udi Rotics.
\newblock Linear time solvable optimization problems on graphs of bounded
  clique-width.
\newblock {\em Theory Comput. Syst.}, 33(2):125--150, 2000.

\bibitem{CO00}
Bruno Courcelle and Stephan Olariu.
\newblock Upper bounds to the clique width of graphs.
\newblock {\em Discrete Appl. Math.}, 101(1-3):77--114, 2000.

\bibitem{CunninghamE80}
William~H. Cunningham and Jack Edmonds.
\newblock A combinatorial decomposition theory.
\newblock {\em Canad. J. Math.}, 32(3):734--765, 1980.

\bibitem{DabrowskiDP2016}
Konrad~K. Dabrowski, Fran\c{c}ois Dross, and Dani\"{e}l Paulusma.
\newblock Colouring diamond-free graphs.
\newblock {\em J. Comput. System Sci.}, 89:410--431, 2017.

\bibitem{DDJKKOP18}
Konrad~K. Dabrowski, Fran{\c{c}}ois Dross, Jisu Jeong, Mamadou~Moustapha
  Kant{\'{e}}, O{-}joung Kwon, Sang{-}il Oum, and Dani{\"{e}}l Paulusma.
\newblock Computing small pivot-minors.
\newblock {\em Proc. {W}{G} 2018, {L}{N}{C}{S}}, 11159:125--138, 2018.

\bibitem{DDJKKOP19}
Konrad~K. Dabrowski, Fran{\c{c}}ois Dross, Jisu Jeong, Mamadou~Moustapha
  Kant{\'{e}}, O{-}joung Kwon, Sang{-}il Oum, and Dani{\"{e}}l Paulusma.
\newblock Tree pivot-minors and linear rank-width.
\newblock {\em Proc. EuroComb 2019, Acta Mathematica Universitatis Comenianae},
  88(3):577--583, 2019.

\bibitem{DJP19}
Konrad~K. Dabrowski, Matthew Johnson, and Dani\"el Paulusma.
\newblock Clique-width for hereditary graph classes.
\newblock {\em London Mathematical Society Lecture Note Series}, 456:1--56,
  2019.

\bibitem{DP14}
Konrad~K. Dabrowski and Dani\"{e}l Paulusma.
\newblock Classifying the clique-width of {$H$}-free bipartite graphs.
\newblock {\em Discrete Appl. Math.}, 200:43--51, 2016.

\bibitem{DP16}
Konrad~K. Dabrowski and Dani\"el Paulusma.
\newblock Clique-width of graph classes defined by two forbidden induced
  subgraphs.
\newblock {\em The Computer Journal}, 59(5):650--666, 2016.

\bibitem{EGW01}
Wolfgang Espelage, Frank Gurski, and Egon Wanke.
\newblock How to solve {NP-hard} graph problems on clique-width bounded graphs
  in polynomial time.
\newblock {\em Proc. WG 2001, LNCS}, 2204:117--128, 2001.

\bibitem{Ga11}
Robert Ganian.
\newblock Thread graphs, linear rank-width and their algorithmic applications.
\newblock {\em Proc. IWOCA 2010, LNCS}, 6460:38--42, 2011.

\bibitem{GKMW2019}
Jim Geelen, O{-}joung Kwon, Rose McCarty, and Paul Wollan.
\newblock The grid theorem for vertex-minors.
\newblock {\em Journal of Combinatorial Theory, Series B}, in press.

\bibitem{GK03}
Michael~U. Gerber and Daniel Kobler.
\newblock Algorithms for vertex-partitioning problems on graphs with fixed
  clique-width.
\newblock {\em Theoretical Computer Science}, 299(1):719--734, 2003.

\bibitem{JKO16}
Jisu Jeong, Eun~Jung Kim, and Sang{-}il Oum.
\newblock The ``art of trellis decoding'' is fixed-parameter tractable.
\newblock {\em IEEE Trans. Inform. Theory}, 63(11):7178--7205, 2017.

\bibitem{KanteK2018}
Mamadou~Moustapha Kant\'{e} and O{-}joung Kwon.
\newblock Linear rank-width of distance-hereditary graphs {II}. {V}ertex-minor
  obstructions.
\newblock {\em European J. Combin.}, 74:110--139, 2018.

\bibitem{Ka08}
Navin Kashyap.
\newblock Matroid pathwidth and code trellis complexity.
\newblock {\em SIAM J. Discrete Math.}, 22(1):256--272, 2008.

\bibitem{KR03}
Daniel Kobler and Udi Rotics.
\newblock Edge dominating set and colorings on graphs with fixed clique-width.
\newblock {\em Discrete Applied Mathematics}, 126(2--3):197--221, 2003.

\bibitem{KMOW19}
O{-}joung Kwon, Rose McCarty, Sang{-}il Oum, and Paul Wollan.
\newblock Obstructions for bounded shrub-depth and rank-depth.
\newblock {\em J. Combin. Theory Ser. B}, 149:76--91, 2021.

\bibitem{ko14}
O-joung Kwon and Sang{-}il Oum.
\newblock Graphs of small rank-width are pivot-minors of graphs of small
  tree-width.
\newblock {\em Discrete Appl. Math.}, 168:108--118, 2014.

\bibitem{Lozin2000}
Vadim~V. Lozin.
\newblock {$E$}-free bipartite graphs.
\newblock {\em Diskretn. Anal. Issled. Oper. Ser. 1}, 7(1):49--66, 103, 2000.

\bibitem{Lo02}
Vadim~V. Lozin.
\newblock Bipartite graphs without a skew star.
\newblock {\em Discrete Mathematics}, 257(1):83--100, 2002.

\bibitem{LR}
Vadim~V. Lozin and Igor Razgon.
\newblock Tree-width dichotomy.
\newblock arXiv:2012.01115, 2020.

\bibitem{LV08}
Vadim~V. Lozin and Jordan Volz.
\newblock The clique-width of bipartite graphs in monogenic classes.
\newblock {\em International Journal of Foundations of Computer Science},
  19(2):477--494, 2008.

\bibitem{Nesetril19}
Jaroslav Ne\v{s}et\v{r}il, Patrice Ossona~de Mendez, Roman Rabinovich, and
  Sebastian Siebertz.
\newblock Classes of graphs with low complexity: {T}he case of classes with
  bounded linear rankwidth.
\newblock {\em European J. Combin.}, 91:103223, 2021.

\bibitem{Ou05}
Sang{-}il Oum.
\newblock Rank-width and vertex-minors.
\newblock {\em J. Combin. Theory Ser. B}, 95(1):79--100, 2005.

\bibitem{Oum2009}
Sang{-}il Oum.
\newblock Excluding a bipartite circle graph from line graphs.
\newblock {\em J. Graph Theory}, 60(3):183--203, 2009.

\bibitem{Ou17}
Sang{-}il Oum.
\newblock Rank-width: algorithmic and structural results.
\newblock {\em Discrete Appl. Math.}, 231:15--24, 2017.

\bibitem{OS06}
Sang{-}il Oum and Paul Seymour.
\newblock Approximating clique-width and branch-width.
\newblock {\em J. Combin. Theory Ser. B}, 96(4):514--528, 2006.

\bibitem{Ra07}
Micha\"el Rao.
\newblock {MSOL} partitioning problems on graphs of bounded treewidth and
  clique-width.
\newblock {\em Theoretical Computer Science}, 377(1--3):260--267, 2007.

\bibitem{GMI}
Neil Robertson and P.~D. Seymour.
\newblock Graph minors. {I}. {E}xcluding a forest.
\newblock {\em J. Combin. Theory Ser. B}, 35(1):39--61, 1983.

\bibitem{GMV}
Neil Robertson and P.~D. Seymour.
\newblock Graph minors. {V}. {E}xcluding a planar graph.
\newblock {\em J. Combin. Theory Ser. B}, 41(1):92--114, 1986.

\bibitem{GMXX}
Neil Robertson and P.~D. Seymour.
\newblock Graph minors. {XX}. {W}agner's conjecture.
\newblock {\em J. Combin. Theory Ser. B}, 92(2):325--357, 2004.

\bibitem{Takahashi1994}
Atsushi Takahashi, Shuichi Ueno, and Yoji Kajitani.
\newblock Minimal acyclic forbidden minors for the family of graphs with
  bounded path-width.
\newblock {\em Discrete Math.}, 127(1-3):293--304, 1994.

\bibitem{Yannakakis1982}
Mihalis Yannakakis.
\newblock The complexity of the partial order dimension problem.
\newblock {\em SIAM J. Algebraic Discrete Methods}, 3(3):351--358, 1982.

\end{thebibliography}

\end{document}